\newtheorem{theorem}[equation]{Theorem}
\newtheorem{lemma}[equation]{Lemma}
\newtheorem{corollary}[equation]{Corollary}
\newtheorem{proposition}[equation]{Proposition}
\newtheorem{claim}{Claim}
\theoremstyle{definition}
\newtheorem{definition}[equation]{Definition}
\newtheorem{remark}[equation]{Remark}
\newtheorem{example}[equation]{Example}
\newtheorem*{caution}{Caution}
\numberwithin{equation}{section}
\def\bC{{\mathbb C}}
\def\bP{{\mathbb P}}
\def\bQ{{\mathbb Q}}
\def\bR{{\mathbb R}}
\def\bZ{{\mathbb Z}}
\def\bT{{\mathbb T}}
\def\cC{{\mathcal{C}}}
\def\cO{{\mathcal{O}}}
\def\cN{{\mathcal{N}}}
\def\cH{{\mathcal{H}}}
\def\cR{{\mathcal R}}
\def\cF{{\mathcal F}}
\def\cS{{\mathcal S}}
\def\wt{\widetilde}
\def\ovl{\overline}
\def\Hom{\operatorname{Hom}}
\def\Pic{\operatorname{Pic}}
\def\Spec{\operatorname{Spec}}
\def\deg{\operatorname{deg}}
\def\Nef{{\operatorname{Nef}}}
\def\SAmp{{\operatorname{SAmp}}}
\def\Mov{{\operatorname{Mov}}}
\def\Cl{\operatorname{Cl}}
\def\Sp{\operatorname{Sp}}
\definecolor{zielony}{rgb}{0.5, 0.9, 0.1}
\definecolor{czerwony}{rgb}{0.9, 0.2, 0.1}
\definecolor{niebieski}{rgb}{0.3, 0.1, 0.9}
\begin{document}

\title[Cox rings and symplectic quotient singularities with torus action]
 {Cox rings and symplectic quotient singularities with torus action}

\author[M.~Grab]{Maksymilian Grab}
\address{Instytut Matematyki UW, Banacha 2, 02-097 Warszawa, Poland}
\email{M.Grab@mimuw.edu.pl}


\keywords{Cox ring, algebraic torus action, resolution of singularities, crepant resolution, symplectic quotient singularity, symplectic resolution}

\date{\today}

\thanks{
}


\begin{abstract}
We develop a method of finding a Cox ring of a crepant resolution of a quotient singularity with a torus action and apply it to examples of symplectic quotient singularities in dimension 4. 
In addition we obtain a bound on the degrees of homogeneous generators of the Cox ring in a more general setup, based on the multigraded Castelnuovo-Mumford regularity and Kawamata-Viehweg vanishing.
\end{abstract}

\maketitle

\section{Introduction}

In this article we study the interaction between three themes -- torus actions, crepant resolutions and Cox rings -- in the context of quotients $\bC^4/G$ for certain reducible symplectic representations $G$ of finite groups. We consider the binary tetrahedral group, the symmetry group $S_3$ and the wreath product $\bZ_2\wr S_2$ (isomorphic to the dihedral group of order eight). 

By the classical theorem of Hilbert and Noether, an invariant ring $\bC[x_1,\ldots, x_n]^{G},$ where $G$ is a finite group acting linearly on polynomial ring, is a finitely generated algebra. Quotient singularity is the corresponding singular varieties of the form $\bC^n/G = \Spec \bC[x_1,\ldots,x_n]^G$. The study of crepant resolutions of quotient singularities generalizes the theory of minimal resolutions of du Val singularities to dimensions greater than two. It shows an interplay between geometry and the theory of finite groups in the McKay correspondence~\cite{ReidMcKayCorrespondence}.

The Cox ring of a normal algebraic variety $X$ with a finitely generated Weil divisor class group $\Cl(X)$ is a $\Cl(X)$-graded ring (see~\cite{CoxRings} for a precise construction and a detailed exposition):
\begin{equation*}
\cR(X) = \bigoplus_{D\in\Cl(X)}H^0(X,D).
\end{equation*}
If $\cR(X)$ is a finitely generated $\bC$-algebra it gives a powerful tool to study the geometry of $X$ and its small modifications (i.e. modifications in codimension greater than one). In particular one may recover $X$ as a GIT quotient of $\Spec \cR(X)$ by the action of the Picard (quasi)torus $\bT = \Hom(\Cl(X),\bC^*)$. This is the case in our setting, i.e. if $X$ is a crepant resolution of a four-dimensional symplectic quotient singularity, then $\cR(X)$ is a finitely generated algebra, see~\cite[Theorem~3.2]{AW} and~\cite[Theorem~1.2]{WierzbaWisniewski}.

Varieties with an action of an algebraic torus $(\bC^*)^r$ form an important class of objects in algebraic geometry due to their relations with combinatorics and convex geometry. Well-studied examples include toric varieties~\cite{CLS}, but not only, see e.g.~\cite{Tvarieties}. Of the most importance for us will be the recent methods presented in~\cite{BWW}. They enable us to describe combinatorially local data of the torus action around fixed points and to connect them with the cohomology of line bundles via the Lefschetz-Riemann-Roch theorem.

The quotients $\bC^4/G$ that we consider were subjects of previous studies. In particular it was known that the resolution exists, and the explicit constructions of the resolutions were given, see e.g.~\cite{BellamySchedler}, and~\cite{LehnSorger}. Here however we take a different approach, originated in~\cite{CoxSurf} and~\cite{81resolutions}, and developed in~\cite{SymplCox} and~\cite{Yamagishi}, aiming at generalizations to the study of crepant resolutions of other quotient singularities. The idea is to analyze the geometry of crepant resolutions via the algebraic and combinatorial properties encoded by their Cox ring (the same for all such resolutions of a fixed quotient singularity). In particular one would like to find the presentation of the Cox ring and construct a resolution as a GIT quotient. 

We build up on one of the previous attempts to give a general framework for computing Cox rings of crepant resolutions of quotient singularities, developed in~\cite{CoxSurf},~\cite{81resolutions},~\cite{SymplCox},~\cite{Yamagishi}. In fact the (non-minimal) lists of generators of the Cox ring of the crepant resolution of $\bC^4/G$ were found by Yamagishi by an algorithm presented in~\cite[\S 4]{Yamagishi}. In this paper we present an alternative, more geometric method to deal with main difficulties in computing the Cox ring, using a torus action on the resolution. Our motivation is twofold. For once we are eager to test and develop the new methods related to torus actions in the nontrivial context of quotient singularities with an action of the higher complexity. Apart from that we hope that such a method would broaden the applications of the approach to study crepant resolutions via Cox rings, which currently is limited by the computational complexity of the known algorithms.

The starting point of the whole approach is an embedding of the Cox ring into the ring of Laurent polynomials over the invariant ring of the commutator subgroup of group $G$ used to construct the quotient. Using the McKay correspondence one constructs a finitely generated subring $\cR$ which is a candidate to be the whole Cox ring of a resolution. The nontrivial step is to check if this ring is the actual Cox ring.
The method that we propose in this article begins with the study of geometry of a GIT quotient $X = \Spec \cR /\!/\bT$ for appropriately chosen linearization. Combinatorial data given by local properties of a torus action on this quotient helps us to find an invariant open cover by copies of four-dimensional affine space. As a consequence this quotient is a crepant resolution. To prove that ring $\cR$ is actually the whole Cox ring $\cR(X)$ we use the Lefschetz-Riemann-Roch formula~\cite{BFQ} for the equivariant Euler characteristic and the Kawamata-Viehweg vanishing to calculate the generating function for dimensions of weight spaces of movable linear systems on a resolution. The final argument is made possible by reduction to the checking equality of the finite number of graded pieces of both rings $\cR$ and $\cR(X)$. To make such reduction we prove a bound on the degrees of generators of the Cox ring, which works in more general setup than crepant resolutions of the quotient singularities. The main tools here are multigraded Castelnuovo-Mumford regularity and Kawamata-Viehweg vanishing.

The structure of the paper is as follows. Section~\ref{section:CM-estimation} contains the general results on the bounding degrees of generators of the Cox ring. Section~\ref{section:sympl-strategy} gives an outline of our method to obtain the generators of the Cox ring of crepant resolutions for a quotient singularity with a torus action. In Section~\ref{section:LS} we give a detailed application of this strategy in case of a symplectic action of binary tetrahedral group on $\bC^4$. Finally, Section~\ref{section:two-examples} lists analogous results in case of two simpler examples: actions of symmetric group $S_3$ and wreath product $\bZ_2 \wr S_2$.

\subsection*{Acknowledgments}
The author would like to thank Jaros\l{}aw Wi\'sniewski for proposing the research topic and for many helpful discussions. He also would like to express gratitude toward Maria Donten-Bury for the help with the computation of the central fibre and the stable locus.

The author was supported by the Polish National Science Center project of number 2015/17/N/ST1/02329.

\section{Bounding the degrees of generators of the Cox ring}
\label{section:CM-estimation}

In this section we leave for a moment the context of quotient singularities to give a direct bound on the degrees of generators of Cox rings. It will be applied to crepant resolutions of quotient singularities in later part of the article (see sections~\ref{section:sympl-strategy} and~\ref{section:Cox-LS}). To obtain such a bound we use the Kawamata-Viehweg vanishing and properties of the multigraded Castelnuovo-Mumford regularity (as introduced by Maclagan and Smith in~\cite{MaclaganSmith}).

We employ these concepts via the following proposition.

\begin{proposition}\label{proposition:multigraded-regularity}
Assume that $X$ is a smooth algebraic variety with $K_X = 0$. Let $\varphi\colon X\to Y$ be a projective morphism onto an affine variety $Y$. Let $B_1,\ldots, B_\ell$ be globally generated line bundles on $X$. Assume that the fibres of $\varphi$ are of dimension at most $r$. Let $A$ be a line bundle on $X$ such that $A\otimes B^{-\mathbf{u}}$ is $\varphi$-nef and $\varphi$-big for every tuple $\mathbf{u}$ of nonnegative integers with $|\mathbf{u}| \le r$.
Then the multiplication map:
\begin{equation*}
H^{0}(X,A\otimes B^{\mathbf{v}})\otimes H^{0}(X,B^{\mathbf{w}}) \to H^{0}(X,A\otimes B^{\mathbf{v}+\mathbf{w}})
\end{equation*}
is surjective for all $\ell$-tuples of nonnegative integers $\mathbf{v},\mathbf{w}$.
\end{proposition} 

\begin{proof}
This is a direct consequence of~\cite[Thm~2.1(2)]{multigraded_regularity} for $L = \cO_X$ and $\cF = A$ since the defining property of $\cO_X$-regular sheaf follows then by Kawamata-Viehweg vanishing theorem~\cite[Thm~1-2-3]{KMM87}. The vanishing of cohomologies $H^i(X,A\otimes B^{-\mathbf{u}})$ with $i > r$ follows from the assumption on the dimension of fibres of $\varphi$ as $Y$ is affine.
\end{proof}

\subsection*{Degrees of generators of the Cox ring}
Let $X$ is a normal variety with finitely generated Cox ring and $H^0(X,\cO_X^*) = \bC^*$. Let $\varphi\colon X\to Y$ be a projective morphism, where $Y$ is a normal affine variety with torsion Weil divisor class group. 

We are motivated by the following well-known result.
\begin{theorem}\label{thm-mds}
The two following conditions are equivalent:
\begin{enumerate}
\item Cox ring $\cR(X)$ of $X$ is finitely generated $\bC$-algebra
\item There exists a finite polyhedral subdivision $\Sigma$ of the cone $\Mov(X)$ of movable divisors on $X$ by cones $(f_i^{-1})_*(\Nef(X_i/Y))$, where $X_i$ are normal, $\bQ$-factorial varieties projective over $Y$ and with finitely generated Weil divisor class groups, and where $f_i\colon X \dasharrow X_i$ are birational maps over $Y$, which are isomorphisms in codimension $1$. Moreover all nef bundles on $X_i$ are semiample.
\end{enumerate} 
\end{theorem}
\begin{proof}
See for example~\cite[Sect~4.3.3]{CoxRings} for the case where $Y = \Spec \bC$ -- arguments given there generalize directly to our setup, since $\Cl(X/Y)\otimes_{\bZ} \bQ = \Pic(X)\otimes_{\bZ}\bQ$ as $\Cl(Y)$ is a torsion group. 
\end{proof}
The condition (2) can be found in the literature as a defining condition of a (relative) Mori Dream Space (cf.~\cite[Def~2.5]{AW}).

We will be applying the results of this section in a setup where $\varphi$ is a crepant resolution of symplectic quotient singularities in dimension~4. Then the finite generation of Cox ring is a consequence of~\cite[Thm~3.2]{AW}. It remains true more generally, for minimal models of any quotient singularities in arbitrary dimension, by~\cite[Cor~1.3.2]{BCHM}.

Let $\Sigma$ be subdivision of $\Mov(X)$ from Theorem~\ref{thm-mds} and let $\sigma\in \Sigma$ be a cone of maximal dimension. Then $\sigma = \SAmp(X')$ for some $\varphi'\colon X'\to Y$ isomorphic to $X$ over $Y$ in codimension one. Let $\rho_1,\ldots, \rho_s$ be rays of $\sigma$ and let $D_1,\ldots, D_s\in \Cl(X')$ be ray generators. Assume that $m_1,\ldots, m_s >0$ are such that $m_1D_1,\ldots,m_sD_s$ are divisors with base point free linear systems. Let $\cR(X)_{\sigma} = \bigoplus_{\alpha\in \Cl(X)\cap \sigma}\cR(X)_{\alpha}$.

\begin{proposition}\label{prop:CMestimate}
Assume that $K_X = 0$, $X'$ is smooth and the dimensions of fibres of $\varphi'$ are at most $r$. Then the $\bC$-algebra $\cR(X)_{\sigma}$ is generated by the elements corresponding to global sections of all line bundles of the form $\sum_{i=1}^{s}k_iD_i$ with $0\le k_i < (r+1)m_i$ for every $i$.
\end{proposition}
\begin{proof}
We apply Proposition~\ref{proposition:multigraded-regularity} multiple times for line bundles $\{B_1,\ldots, B_{\ell}\} = \{m_iD_i\colon i\in I\}$ and $A =\sum_{i\in I}(rm_i+k_i)D_i$, for all $I\subset \{1,\ldots,s\}$ and all $0\le k_i < m_i$.
\end{proof}

\begin{proposition}\label{prop:Mov-and-fixed-generate-Cox}
Under the assumptions above the Cox ring $\cR(X)$ is generated by generators of $\cR(X)_{\sigma}$ for all $\sigma\in \Sigma$ together with the elements corresponding to the components of the exceptional divisor.
\end{proposition}
\begin{proof}
The argument is similar to the proof of the fact that Mori Dream Spaces have finitely generated Cox ring, i.e. implication~$(2)\implies (1)$, in Theorem~2.2. Cf.~\cite[Thm~4.3.3.1]{CoxRings}.

Every effective Weil divisor on $X$ is of the form $\ovl{D}+E$ where $\ovl{D}$ is the strict transform of a Weil divisor on $Y$ and $E$ is an effective divisor supported on the components of the exceptional divisor. Moreover $\ovl{D}$ is movable as $D$ is movable on $Y$ (because $Y$ is affine). 

Indeed, assume that $D\sim D'$ on $Y$ for an effective divisor $D'$ and $D'$ does not contain a certain prime component $D_0$ of $D$ in its support. Then $\ovl{D}+F_1\sim \ovl{D'}+F_{2}$, where $F_1,F_2$ are effective divisors supported on the components of the exceptional divisor. Let $A$ be a relatively ample divisor. Let $A' = A - \varphi^*\varphi_*A$. Then $A'$ is relatively ample and $-A'$ is an effective divisor supported on the exceptional divisor. By replacing $A'$ with some positive multiple we may assume that $-(A'+F_1)$ is effective and $A'+F_2$ is relatively ample. Let $A''\sim A'+F_2$ be an effective divisor which does not contain $D_0$ in its support. Now $\ovl{D} \sim \ovl{D'}+F_2 - F_1 = \ovl{D'} - A' - F_1 + A + F_2\sim \ovl{D'} - A' - F_1 + A''$ and the latter is an effective divisor which does not contain $\ovl{D_0}$ in its support. As we may argue in the same manner for each prime component $D_0$, we conclude that $\ovl{D}$ is movable.

As $H^0(X,\cO_X^*) = \bC^*$ homogeneous elements of $\cR(X)$ up to multiplication by a constant correspond to effective divisors on $X$. Therefore we see that every homogeneous element of $\cR(X)$ belongs to the $\bC$-algebra generated by all algebras $\cR(X)_{\sigma}$ and the elements corresponding to the components of exceptional divisor.
\end{proof}

Note that Propositions~\ref{prop:CMestimate} and~\ref{prop:Mov-and-fixed-generate-Cox} combined allow one to bound degrees of generators of $\cR(X)$ under the assumption that all the codimension two modifications $X'\to Y$ of $X\to Y$ corresponding to the chambers in the movable cone are smooth. This assumption is satisfied for example in the case of three-dimensional quotient singularities and in the case of symplectic quotient singularities considered in this paper. 

\section{Cox ring of a resolution via torus action -- outline of strategy}
\label{section:sympl-strategy}

The following well-known observation describes the precise relation between birational geometry of crepant resolutions of the symplectic quotient singularity and their Cox ring.
\begin{proposition}\label{prop:4-dim-sympl-Mov-R}
Assume that $G\subset \Sp_{2n}(G)$ is a finite group and there exists a crepant resolution $\varphi\colon X\to \bC^{2n}/G$. Let $f_1,\ldots,f_m$ be the set of generators of the Cox ring $\cR(X)$ and let $d_i = \deg f_i\in \Pic(X)$. Then $\Mov(X) = \bigcap_{i=1}^{m} \sigma_i$, where $\sigma_i = cone(d_{1},\ldots, d_{i-1},d_{i+1},\ldots,d_m)\subset \Pic(X)\otimes_{\bZ} \bQ$. Moreover there is a one-to-one correspondence between the crepant resolutions of $\bC^{2n}/G$ and GIT chambers of the action of the Picard torus $\bT = \Hom(\Pic(X), \bC^*)$ on $\Spec \cR(X)$ contained in $\Mov(X)$. Namely, taking GIT quotients corresponding to GIT chambers of this action we obtain all pairwise nonisomorphic symplectic resolutions of $\bC^{2n}/G$.
\end{proposition}
\begin{proof}
The description of the movable cone in terms of generators of the Cox ring is well-known as well as the correspondence between GIT chambers in $\Mov(X)$ and $\bQ$-factorial birational models of $X$ isomorphic to $X$ in codimension $1$, see e.g.~\cite[Prop~3.2.3.3]{CoxRings} and~\cite[Thm~3.1.4.3~and~Sect~1.6.3]{CoxRings} respectively. 

It remains to show that every such birational model is smooth, so it is a resolution. This follows by~\cite[Cor~31]{NamikawaFlops} or by a more general result~\cite[Cor~25]{NamikawaFlops}. 
\end{proof}

From now on we will assume that $G\subset\Sp_4(\bC)$ is finite group such that $\bC^{2n}/G$ admits crepant resolution, i.e. we are in the situation from the Proposition~\ref{prop:4-dim-sympl-Mov-R} with $n=2$. Moreover we additionally assume that $\bC^4 = V\oplus V'$ as $G$-representations, with $\dim V = 2 = \dim V'$. By choosing appropriate basis we have then $\bC^4 = \bC^2\oplus \bC^2$, where $G$ acts linearly on each copy of $\bC^2$.

By taking $\bC^*$-action given by multiplication by scalars on each $\bC^2$ we obtain a two-dimensional algebraic torus action on the singularity, which lifts to the crepant resolution~\cite[Thm~1.3]{KaledinSelecta}. We are interested in to finding the Cox ring and study geometry of these resolutions. In this section we formulate briefly the general strategy which we illustrate in the remaining part of the paper.

\begin{caution}
To avoid confusion we once more emphasize the fact that in this and following sections we consider simultaneously two different tori. One is the Picard torus $\bT= \Hom(\Pic(X),\bC^*)$ of a resolution $X\to \bC^4/G$ and the other one is the torus $T = (\bC^*)^2$ acting on $\bC^4= \bC^2\oplus \bC^2$ by multiplication by scalars on each component $\bC^2$. Torus $T$ acts also on the quotient and (equivariantly) on resolutions.
\end{caution}

First, using the general method of finding generators for the Cox ring from paper~\cite{SymplCox} we find a `candidate set' for a set of generators of the Cox ring. We form a subalgebra $\cR$ of the Cox ring generated by this set and compute the cone $\Mov(\cR)$ together with its GIT subdivision with respect to the Picard torus action on $\Spec \cR$ induced by the $\Cl(X)$-grading. This step, together with the formulation of the final result, is done in Section~\ref{section:LS-generators}.

Then, we study geometry of the GIT quotient with respect to a linearization $\lambda$ from a chamber of the GIT subdivision of $\Mov(\cR)$. We start with the study of the central fibre, i.e. the fibre over the point $[0]\in \bC^{4}/G$ given by a natural morphism $(\Spec \cR)^{ss}(\lambda)/\!/\bT \to \bC^4/G$. This is done in Section~\ref{section:central-fibre}.

Next, we present an open cover of the GIT quotient consisting of affine spaces, thus proving the smoothness of the quotient. At this point we are also able to prove that the GIT quotient is a crepant resolution. This step is done in Section~\ref{section:smoothness-of-GIT-quotient}.

Then, in Section~\ref{section:compasses} we explain how we found the open cover. Working under the assumption of smoothness of the quotient, we find \emph{compasses} at fixed point (see Definition~\ref{def-compass}) of two-dimensional torus action on the GIT quotient and give the heuristic argument that was used to predict the cover.

To prove that $\cR$ is actually the whole Cox ring, we employ results of Section~\ref{section:CM-estimation} and the Lefschetz-Riemann-Roch theorem~\cite[Appx~A]{BWW}. 
First, we prove that $\Mov(X)$ is described by degrees of chosen generators of $\cR$ as if $\cR$ were the Cox ring (cf. Proposition~\ref{prop:4-dim-sympl-Mov-R}) and that the GIT subdivision of this cone is the same as the subdivision induced by the Picard torus action on $\cR(X)$. We also calculate the Hilbert series for the subring $\cR(X)_{\ge 0}:=\bigoplus_{L\in \Mov(X)}\cR(X)_L$. This is done in Section~\ref{section:hilbert}. Finally, in Section~\ref{section:Cox-LS} we apply the results of Section~\ref{section:CM-estimation} to deduce that $\cR = \cR(X)$.

\section{Binary tetrahedral group}
\label{section:LS}
\subsection{The setup and the final result}\label{section:LS-generators}

Let $G\subset \Sp_{4}(\bC)$ be the symplectic representation of binary tetrahedral group generated by the matrices:
{
\small
\begin{equation*}
\begin{pmatrix}
i & 0 & 0 & 0\\
0 & -i & 0 & 0\\
0 & 0 & i & 0\\
0 & 0 & 0 & -i
\end{pmatrix}, \qquad
-\frac{1}{2}\begin{pmatrix} 
(1+i)\epsilon & (-1+i)\epsilon & 0 & 0 \\
(1+i)\epsilon & (1-i)\epsilon & 0 & 0 \\
0 & 0 & (1+i)\epsilon^2 & (-1+i)\epsilon^2\\
0 & 0 & (1+i)\epsilon^2 & (1-i)\epsilon^2
\end{pmatrix},
\end{equation*}
}
where $\epsilon = e^{2\pi i /3}$ is a third root of unity.

In this section we collect known facts on this group, the corresponding symplectic quotient singularity $\bC^4/G$ and its symplectic resolution $X$, which were investigated previously by Bellamy and Schedler in~\cite{BellamySchedler} and by Lehn and Sorger in~\cite{LehnSorger}. In particular we introduce two $\bZ^2$-gradings on the Cox ring of $X$, one induced by the Picard torus action and one induced by the action of a two-dimensional torus on $X$. This is the first step in the program outlined in the previous section.

Let us recall that the element $g\in G\subset \Sp_{4}(\bC)$ is a symplectic reflection if its fixed-point linear subspace in $\bC^{4}$ has codimension 2. By McKay correspondence of Ito and Reid~\cite{ItoReid} conjugacy classes of such elements are in one-to-one correspondence with crepant divisors over the quotient singularity $\bC^4/G$.

\begin{proposition}\label{LS-structure-of-G}
\leavevmode
\begin{enumerate}
\item There are $7$ conjugacy classes of elements of $G$ among which two consist of symplectic reflections.
\item The commutator subgroup $[G,G]$ has order $8$ and it is isomorphic to the quaternion group. In particular the abelianization $G/[G,G]$ is cyclic of order $3$.
\item The representation $G$ defined above is reducible. It decomposes into two two-dimensional representations $V_1\oplus V_2$. In particular the $(\bC^{*})^{2}$-action on $\bC^{4}$ induced by multiplication by scalars on $V_i$ commutes with $G$.
\end{enumerate}
\end{proposition}
\begin{proof}
Points (1) and (2) can be quickly verified with GAP Computer Algebra System~\cite{GAP4}. Point (3) follows directly by the definition of $G$.
\end{proof}

Let $\Sigma \subset \bC^4/G$ be the singular locus of the quotient. It can be described as follows.
\begin{proposition}\label{LS-sing-quotient}
The preimage of $\Sigma$ via the quotient map $\bC^4\to \bC^4/G$ consists of four planes, each of which maps onto $\Sigma$. Outside the image of $0$ the singular locus is a transversal $A_2$-singularity.
\end{proposition}
\begin{proof}
This is a consequence of a direct computation of the subspaces fixed by symplectic reflections of $G$ -- these are the only elements that stabilize a proper nontrivial subspace of $\bC^4$.
\end{proof}

Let $\varphi\colon X\to \bC^{4}/G$ be a crepant resolution.  It is known that such a resolution exists (see~\cite[Sect~1]{BellamySchedler}) but we will also prove it independently in Section~\ref{section:smoothness-of-GIT-quotient}. Using the symplectic McKay correspondence~\cite{KaledinMcKay} and Propositions~\ref{LS-structure-of-G} and~\ref{LS-sing-quotient} we obtain the following facts about the geometry of $X$.

\begin{proposition}
There are two exceptional divisors $E_1,E_2$ of $X$ each of which is mapped onto $\Sigma$. The central fibre $\varphi^{-1}([0])$ consist of four surfaces. The fibre of $\varphi$ over any point in $\Sigma\setminus [0]$ consists of two curves isomorphic to $\bP^1$ intersecting in one point and each of which is contained in exactly one of two exceptional divisors.
\end{proposition}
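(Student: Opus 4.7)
The plan is to assemble the proposition from the symplectic McKay correspondence of Kaledin~\cite{KaledinMcKay} together with propositions~\ref{structure-of-G} and~\ref{sing-quotient}. The numerical input is the splitting of the $7$ conjugacy classes of $G$ by the age grading on the symplectic representation $\bC^4$: the identity has age $0$, the two classes of symplectic reflections from proposition~\ref{structure-of-G}(1) have age $1$, and the remaining four classes all have age $2$, which is the maximum possible age for a subgroup of $\Sp_4(\bC)$.

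First I would count the exceptional divisors. By the symplectic McKay correspondence the number of irreducible exceptional divisors of any symplectic resolution of $\bC^4/G$ equals the number of age-one conjugacy classes, i.e.\ the number of conjugacy classes of symplectic reflections; this yields the two divisors $E_1,E_2$. Both lie over $\Sigma$ since $\pi$ is an isomorphism outside $\Sigma$. Neither can collapse onto $[0]$, because a symplectic resolution of a four-dimensional symplectic quotient has central fibre of dimension at most two while each $E_i$ is three-dimensional. Hence each $E_i$ must surject onto the two-dimensional $\Sigma$.

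Next I would analyse a fibre $\pi^{-1}(p)$ for $p\in \Sigma\setminus \{[0]\}$. By proposition~\ref{sing-quotient} the variety $\bC^4/G$ is analytically locally a product of a two-dimensional disc with the $A_2$ surface singularity at $p$. Since $\pi$ is symplectic and hence crepant, and the unique crepant resolution of an $A_2$ surface singularity is the minimal one, the fibre $\pi^{-1}(p)$ is analytically isomorphic to the exceptional fibre of the minimal resolution of $A_2$: a chain of two $\bP^1$'s meeting in a single point. Both curves lie in the exceptional locus $E_1\cup E_2$, and as the two divisors restrict on a transverse slice to the two distinct $(-2)$-curves of the $A_2$-chain, each $\bP^1$ is contained in exactly one of $E_1,E_2$.

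Finally, for the central fibre, I would invoke the cohomological part of Kaledin's correspondence: the even cohomology of $X$ has a basis indexed by conjugacy classes of $G$, with classes of age $i$ contributing to $H^{2i}$. The cone structure on $\bC^4/G$ lifts to a $\bC^*$-action on $X$ contracting $X$ onto the central fibre, so $H^*(X)\cong H^*(\pi^{-1}([0]))$. Kaledin moreover shows that $\pi^{-1}([0])$ is Lagrangian of pure dimension two, so its $H^4$ is spanned by the fundamental classes of its irreducible surface components. Since $\dim H^4(X)$ equals the number of age-two classes, namely $4$, the central fibre has exactly four surface components. The main obstacle sits here: the numerical count of conjugacy classes by itself gives only the Euler characteristic and would not exclude, a priori, contributions from one-dimensional or lower components; one genuinely needs the pure-dimensional Lagrangian structure of $\pi^{-1}([0])$ from \cite{KaledinMcKay} together with the $\bC^*$-equivariant retraction in order to realise the age-two basis of $H^4(X)$ by fundamental classes of surface components.
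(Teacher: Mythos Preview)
Your proposal is correct and takes exactly the approach the paper indicates: the proposition is stated immediately after the sentence ``Using the symplectic McKay correspondence~\cite{KaledinMcKay} and propositions~\ref{structure-of-G}(1) and~\ref{sing-quotient} we obtain the following facts about the geometry of $X$,'' with no further argument given. You have simply supplied the details (the age count $7=1+2+4$, the transversal $A_2$ analysis, and the identification of $H^4$ of the central fibre with the span of fundamental classes of its surface components) that the paper leaves to the reader.
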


By theorem of Kaledin~\cite[Thm~1.3]{KaledinSelecta} and Proposition~\ref{LS-structure-of-G} we have also:

\begin{corollary}\label{cor:T-action-on-resolution}
There is a natural $T:=(\bC^{*})^{2}$-action on $X$ making $\varphi$ an equivariant map.
\end{corollary}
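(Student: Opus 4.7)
The plan combines the two ingredients named in the statement. The first step is elementary: Proposition~\ref{structure-of-G}(3) provides the decomposition $\bC^4 = V_1 \oplus V_2$ as a sum of two $G$-invariant subrepresentations, and inspection of the generators shows they are block-diagonal in this decomposition. Consequently the torus $T = (\bC^*)^2$ acting by independent scalars on $V_1$ and $V_2$ commutes with $G$, so it descends to an action on $\bC^4/G$ that fixes the image $[0]$ of the origin and preserves the singular locus $\Sigma$.

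The substantive step is lifting this action along $\pi : X \to \bC^4/G$. The plan is to invoke the cited theorem of Kaledin, whose effect in the present setting is that the isomorphism classes of projective symplectic resolutions of $\bC^4/G$ form a finite discrete set (combinatorially indexed by chambers in a hyperplane arrangement on the movable cone of any such resolution). For each $t \in T$, pulling $\pi$ back along the automorphism of $\bC^4/G$ induced by $t$ yields another projective symplectic resolution $X_t \to \bC^4/G$, and the assignment $t \mapsto [X_t]$ from the connected group $T$ to this finite discrete set must be constant. Hence $X_t$ is isomorphic to $X$ over $\bC^4/G$ for every $t$.

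To promote these pointwise identifications into an algebraic $T$-action, I would form the pullback of $\pi$ along the action morphism $T \times (\bC^4/G) \to \bC^4/G$. This is a $T$-equivariant projective symplectic resolution of the $T$-variety $T \times (\bC^4/G)$, and the relative form of Kaledin's uniqueness identifies it, $T$-equivariantly, with the trivial family $T \times X$. Reading this identification fibrewise yields the required algebraic action map $T \times X \to X$, and the equivariance of $\pi$ is built into the construction. The main obstacle I anticipate is precisely this last step: upgrading discreteness of isomorphism classes to an honest algebraic group action requires the relative form of Kaledin's theorem, whereas the construction of the action on the base $\bC^4/G$ is immediate from the block structure of the representation.
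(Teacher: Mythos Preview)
Your proposal is correct and uses the same two ingredients as the paper—Proposition~\ref{structure-of-G}(3) for the action on the base and Kaledin's theorem for the lift to $X$—which the paper simply cites without further argument. Your sketch of the lifting mechanism via finiteness of symplectic resolutions and a relative-family argument is a reasonable unpacking, though likely more roundabout than what the cited result provides directly.
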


As we noted earlier, we consider two different two-dimensional tori -- one is the Picard torus $\bT = \Hom(\Cl(X), \bC^*)$ torus $T$ acting on $\bC^4= \bC^2\times \bC^2$ by multiplication of scalars on each component $\bC^2$. 

Let $C_i$ be the numerical class of a complete curve which is a generic fibre of the morphism $\varphi|_{E_i}\colon E_i\to \Sigma$. We may describe generators of the Picard group of $X$ in terms of its intersection with curve $C_i$.

\begin{proposition}[cf.{~\cite[2.16]{81resolutions}}]
The Picard group of $X$ is a free rank two abelian group generated by line bundles $L_1,L_2$ such that the intersection matrix $(L_i.C_j)_{i,j}$ is equal to identity matrix.
\end{proposition}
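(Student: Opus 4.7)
The plan is to treat the proposition in three steps: first fix the rank and torsion-freeness of $\Pic(X)$; second, compute the intersection matrix between the exceptional divisors $E_1,E_2$ and the classes $C_1,C_2$; third, upgrade the rational dual basis to an integral one.

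For the first step, since $\bC^4/G$ is affine with quotient singularities one has $\Pic(\bC^4/G)\otimes\bQ=0$, so every class on $X$ is rationally supported on the exceptional locus, giving $\rk \Pic(X)\le 2$ by the previous proposition. Equality follows from the symplectic McKay correspondence~\cite{KaledinMcKay}, the two relevant classes being indexed by the two conjugacy classes of symplectic reflections identified in Proposition~\ref{structure-of-G}(1). Torsion-freeness can be read off from the Lehn--Sorger construction of $X$ as a sequence of two blow-ups, or, more invariantly, from the exponential sequence combined with $H^1(X,\mathcal{O}_X)=0$ (rational singularities) and the simple-connectedness of $X$, which holds for crepant resolutions of contractible quotient singularities whose singular locus has codimension at least two.

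To compute $(E_i\cdot C_j)$, I would pass to a formal neighbourhood of a generic fibre of $\pi$ over $\Sigma$. By Proposition~\ref{sing-quotient} the transverse type is $A_2$, so $\pi$ there is analytically a product of the minimal $A_2$-resolution with a smooth surface factor. Writing $C_1,C_2$ for the two exceptional $(-2)$-curves of the $A_2$-resolution and $E_1,E_2$ for the corresponding global divisors, the projection formula and the adjunction formula on the surface slice give
\begin{equation*}
(E_i\cdot C_j) \;=\; \begin{pmatrix} -2 & 1 \\ 1 & -2 \end{pmatrix},
\end{equation*}
with determinant $3=|G^{\mathrm{ab}}|$, a numerological match with Proposition~\ref{structure-of-G}(2) which already signals that the $E_i$ cannot form a $\bZ$-basis of $\Pic(X)$.

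The main obstacle is thus the integrality of the dual basis. Inverting the matrix gives $L_i\in\Pic(X)\otimes\bQ$ of the form $-\tfrac{1}{3}(2E_i+E_{3-i})$, which a priori are merely rational. To exhibit them as honest line bundles I would adapt the method of~\cite[\S 2.D]{81resolutions}: each non-trivial character $\chi$ of $G^{\mathrm{ab}}\cong\bZ/3$ defines a rank-one $\chi$-eigensheaf inside the pushforward of $\mathcal{O}_{\bC^4}$ along the quotient map $\bC^4\to\bC^4/G$, a reflexive rank-one sheaf on $\bC^4/G$ whose reflexive pullback to the smooth variety $X$ is automatically locally free. Restricting to the transverse $A_2$-slice recovers the local tautological bundle attached to $\chi$, which in the McKay correspondence for $A_2$ is precisely dual to one of the two exceptional $\bP^1$s. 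Hence after appropriate labelling $L_i\cdot C_j=\delta_{ij}$, and since $\Pic(X)$ has rank two and the intersection pairing with $\bZ C_1\oplus\bZ C_2$ is non-degenerate, these $L_i$ form a $\bZ$-basis.
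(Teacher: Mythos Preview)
The paper does not actually prove this proposition; it simply records it with a reference to~\cite[2.D]{81resolutions} and moves on. Your proposal therefore supplies an argument where the paper supplies none, and the approach you outline is essentially the one in the cited reference: identify the rank via McKay correspondence, compute the $A_2$ Cartan matrix on a transverse slice, and realise the integral dual basis through eigensheaves for the characters of $G^{\mathrm{ab}}$. In that sense there is nothing to compare --- you have reconstructed the intended proof.

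One small point deserves tightening. Your final sentence (``since $\Pic(X)$ has rank two and the intersection pairing with $\bZ C_1\oplus\bZ C_2$ is non-degenerate, these $L_i$ form a $\bZ$-basis'') is not quite the right justification: non-degeneracy of a pairing over $\bQ$ does not by itself force a given pair of elements to be a $\bZ$-basis. What you actually want to say is that the intersection map $\Pic(X)\to\bZ^2$, $L\mapsto(L\cdot C_1,L\cdot C_2)$, is surjective because your $L_1,L_2$ hit the standard basis, and since you have already established that $\Pic(X)$ is free of rank two, a surjection between free abelian groups of the same finite rank is an isomorphism. With that adjustment the argument is complete.
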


One may see the Cox ring $\cR(X)$ of $X$ as a subring of $\bC[x_1,y_1,x_2,y_2]^{[G,G]}[t_1^{\pm 1}, t_{2}^{\pm 1}]$ (see~\cite[Sect~2.1]{SymplCox} for details). It follows that the action of the two-dimensional torus $T=(\bC^*)^2$ on $X$ induces the action on $\Spec \cR(X)$. The methods given in~\cite[Sect~2.1]{SymplCox} and in~\cite[Sect~5]{3dimCox} suggest to consider the following `candidate set' for generators of $\cR(X)$. It may be verified by the general algorithm of~\cite[Sect~4]{Yamagishi} that this indeed is the set of generators. Denote the following elements of the (Laurent) polynomial ring $\bC[x_{1},y_1,x_2,y_2][t_1^{\pm 1},t_{2}^{\pm 1}]$:

{\tiny

\begin{tabular}{l}
$w_{01} = y_1x_2-x_1y_2,$\\
$w_{02} = x_2^5y_2-x_2y_2^5,$\\
$w_{03} = x_1^5y_1-x_1y_1^5,$\\
$w_{04} = x_1^4+(-4b+2)x_1^2y_1^2+y_1^4,$\\
$w_{05} = x_2^4+(4b-2)x_2^2y_2^2+y_2^4,$\\
$w_{06} = x_1x_2^3+(-2b+1)y_1x_2^2y_2+(-2b+1)x_1x_2y_2^2+y_1y_2^3,$\\
$w_{07} = x_1^3x_2+(2b-1)x_1y_1^2x_2+(2b-1)x_1^2y_1y_2+y_1^3y_2,$\\
\end{tabular}

\bigskip

\begin{tabular}{l}
$w_{11} = (-3bx_1^2x_2^2+(-b+2)y_1^2x_2^2+(-4b+8)x_1y_1x_2y_2+(-b+2)x_1^2y_2^2-3by_1^2y_2^2)t_1,$\\
$w_{12} = (x_2^4+(-4b+2)x_2^2y_2^2+y_2^4)t_1,$\\
$w_{13} = (x_1^3x_2+(-2b+1)x_1y_1^2x_2+(-2b+1)x_1^2y_1y_2+y_1^3y_2)t_1,$\\
$w_{14} = (-5x_1^4y_1x_2+y_1^5x_2-x_1^5y_2+5x_1y_1^4y_2)t_1,$\\
$w_{15} = (x_1y_1x_2^4+2x_1^2x_2^3y_2-2y_1^2x_2y_2^3-x_1y_1y_2^4)t_1,$\\
\end{tabular}

\bigskip

\begin{tabular}{l}
$w_{21} = ((3b-3)x_1^2x_2^2+(b+1)y_1^2x_2^2+(4b+4)x_1y_1x_2y_2+(b+1)x_1^2y_2^2+(3b-3)y_1^2y_2^2)t_2,$\\
$w_{22} = (x_1^4+(4b-2)x_1^2y_1^2+y_1^4)t_2,$\\
$w_{23} = (x_1x_2^3+(2b-1)y_1x_2^2y_2+(2b-1)x_1x_2y_2^2+y_1y_2^3)t_2,$\\
$w_{24} = (y_1x_2^5+5x_1x_2^4y_2-5y_1x_2y_2^4-x_1y_2^5)t_2,$\\
$w_{25} = (-2x_1^3y_1x_2^2-x_1^4x_2y_2+y_1^4x_2y_2+2x_1y_1^3y_2^2)t_2,$\\
\end{tabular}

\bigskip 

\begin{tabular}{l}
$w_{3} = (9x_1^2y_1x_2^3+(-2b+1)y_1^3x_2^3+9x_1^3x_2^2y_2+(6b-3)x_1y_1^2x_2^2y_2+(-6b+3)x_1^2y_1x_2y_2^2-9y_1^3x_2y_2^2+$\\
$+(2b-1)x_1^3y_2^3-9x_1y_1^2y_2^3)t_1t_2$\\
\\
$s = t_1^{-2}t_{2},$\\
$t = t_1t_2^{-2},$\\
\end{tabular}

}
where $b$ is a primitive root of unity of order $6$.
\begin{theorem}\label{thm:Cox-ring}
The Cox ring $\cR(X)$ of $X$ is isomorphic to the algebra generated by 20 generators:
\begin{equation*}
w_{01},\ldots,w_{07},w_{11},\ldots,w_{15},w_{21},\ldots,w_{25},w_{3},s,t.
\end{equation*} 
The degree matrix of these generators with respect to the generators $L_1,L_2$ of $\Pic(X)$ (first two rows) and with respect to the $T$-action (remaining two rows) is:
\setcounter{MaxMatrixCols}{20}
\begin{equation*}
\left(\begin{smallmatrix}
w_{01} & w_{02} & w_{03} & w_{04} & w_{05} & w_{06} & w_{07} & w_{11} & w_{12} & w_{13} & w_{14} & w_{15} & w_{21} & w_{22} & w_{23} & w_{24} & w_{25} & w_{3} & s & t \\
\\
\hline \\
& & & & & & & & & & & & & & & & & & & &\\
0 & 0 & 0 & 0 & 0 & 0 & 0 & 1 & 1 & 1 & 1 & 1 & 0 & 0 & 0 & 0 & 0 & 1 & -2 & 1\\
0 & 0 & 0 & 0 & 0 & 0 & 0 & 0 & 0 & 0 & 0 & 0 & 1 & 1 & 1 & 1 & 1 & 1 & 1 & -2\\
\\
\hline \\
& & & & & & & & & & & & & & & & & & & &\\
1 & 0 & 6 & 4 & 0 & 1 & 3 & 2 & 0 & 3 & 5 & 2 & 2 & 4 & 1 & 1 & 4 & 3 & 0 & 0\\
1 & 6 & 0 & 0 & 4 & 3 & 1 & 2 & 4 & 1 & 1 & 4 & 2 & 0 & 3 & 5 & 2 & 3 & 0 & 0
\end{smallmatrix}\right)
\end{equation*}
\end{theorem}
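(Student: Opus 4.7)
The statement has two parts: that each of the twenty listed elements belongs to $\cR(X)$ with the stated multidegrees, and that together they generate $\cR(X)$ as a $\bC$-algebra. I would treat these in turn, with the bulk of the work in the second part.

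For the first part, I would use the embedding $\cR(X) \hookrightarrow \bC[x_1,y_1,x_2,y_2]^{[G,G]}[t_1^{\pm 1},t_2^{\pm 1}]$ recalled from \cite[\S~2.1]{SymplCox}. Each listed polynomial in $x_1,y_1,x_2,y_2$ is $[G,G]$-invariant (a direct or GAP-assisted check), and the attached monomials in $t_1,t_2$ are arranged so that the full product transforms under $G/[G,G]\simeq \bZ/3$ as required to lie in $\cR(X)$. The first two rows of the degree matrix are then the exponents of $t_1,t_2$, which by construction are the $\Pic(X)$-degrees in the basis $L_1,L_2$; the last two rows are the weights under the $T$-action, which scales $(x_1,y_1)$ and $(x_2,y_2)$ componentwise and fixes $t_1,t_2$, and can be read off directly from the monomials.

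For the second part, let $\cR \subseteq \cR(X)$ denote the subring generated by the twenty elements, and consider the candidate resolution $Y := \Spec \cR /\!/ \bT$. The first pillar, carried out in Sections~\ref{section:central-fibre} and~\ref{section:compasses}, is smoothness of $Y$: one combinatorially identifies the central fibre and the local model of the $T$-action around each $T$-fixed point, and then verifies smoothness of an affine open cover in Theorem~\ref{thm:smoothness-of-GIT-quotient}. By Corollary~\ref{corollary-R-gives-resolution}, $Y$ is then a projective crepant resolution of $\bC^4/G$, which we identify with the $X$ of the theorem, thereby upgrading the inclusion $\cR \subseteq \cR(X)$ to an equality on graded pieces that separate points and tangents of $X$.

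The second pillar then turns $\cR = \cR(X)$ on \emph{every} graded piece into a finite dimension comparison. On one side, Lefschetz-Riemann-Roch \cite{BFQ} applied to the $T$-action, combined with Kawamata-Viehweg vanishing, yields the closed-form generating function for $\dim_\bC H^0(X,D) = \dim_\bC \cR(X)_D$ in Theorem~\ref{thm:generating-functions}. On the other, the ideal of relations among the twenty generators gives, via a Gr\"obner-basis computation inside the Laurent ring where they are written, the Hilbert series of $\cR$. The multigraded Castelnuovo-Mumford regularity argument of Section~\ref{section:reCox}, again using Kawamata-Viehweg vanishing, produces an explicit finite set of classes $D\in \Cl(X)$ on which the equality $\cR_D = \cR(X)_D$ propagates to all multidegrees. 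The main obstacle is this final step: extracting a usable presentation of $\cR$ and executing the finite but sizable dimension comparison by computer algebra. Once this succeeds, the inclusion $\cR \hookrightarrow \cR(X)$ is forced to be an equality on every graded piece, proving the theorem.
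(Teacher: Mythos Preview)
Your proposal is correct and follows essentially the same architecture as the paper: verify membership and degrees via the embedding into $\bC[x_1,y_1,x_2,y_2]^{[G,G]}[t_1^{\pm 1},t_2^{\pm 1}]$, prove smoothness of $\Spec\cR/\!/\bT$ (Theorem~\ref{thm:smoothness-of-GIT-quotient}) so that it is a crepant resolution (Corollary~\ref{corollary-R-gives-resolution}), compute the Hilbert series of $\cR(X)_{\ge 0}$ via Lefschetz--Riemann--Roch and Kawamata--Viehweg (Theorem~\ref{thm:generating-functions}), and then use multigraded Castelnuovo--Mumford regularity to reduce the comparison $\cR=\cR(X)$ to finitely many graded pieces checked by computer (Section~\ref{section:reCox}). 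The only phrase I would tighten is ``upgrading the inclusion $\cR\subseteq\cR(X)$ to an equality on graded pieces that separate points and tangents of $X$'': the role of Corollary~\ref{corollary-R-gives-resolution} is not to give any partial equality of graded pieces, but to certify that the fixed-point and compass data computed from $\cR$ are genuinely those of a crepant resolution $X$, so that the Lefschetz--Riemann--Roch input to Theorem~\ref{thm:generating-functions} is valid.
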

We prove this theorem in Section~\ref{section:Cox-LS}. From now on we will denote the ring generated by elements from the statement of Theorem~\ref{thm:Cox-ring} by $\cR$.

It is a general principle that using the degrees of generators of the Cox ring of $X$ we can describe the movable cone $\Mov(X)$ and find the number of resolutions and the corresponding subdivision of $\Mov(X)$ into the nef cones of resolutions of $X$ (see Theorem~\ref{thm-mds}). However, since we use the description of movable cone and its subdivision to prove that $\cR$ is a Cox ring of $X$ we give independent proofs.

\begin{proposition}\label{prop-mov-cone}\leavevmode
\begin{enumerate}
\item The cone $\Mov(X)$ of movable divisors of $X$ is equal to the cone generated by the line bundles $L_1$ and $L_2$.
\item There are two symplectic resolutions of $\bC^{4}/G$. The chambers in $\Mov(X)$ corresponding to the nef cones of these resolutions are relative interiors of cones $cone(L_1,L_1+L_2)$ and $cone(L_2,L_1+L_2)$. The Mori cones of corresponding resolutions are $cone(C_2, C_1-C_2)$ and $cone(C_1, C_2-C_1)$.
\end{enumerate}
\end{proposition}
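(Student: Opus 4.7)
The plan is to determine the nef cones of the two resolutions, since their union gives $\Mov(X)$ and their duals give the Mori cones. First I would use the transversal $A_2$ structure along $\Sigma\setminus\{[0]\}$ from Proposition~\ref{sing-quotient} to compute the intersection matrix $(E_i\cdot C_j)$: over a generic point of $\Sigma$ the fibre is the minimal resolution of an $A_2$ singularity, so this matrix is minus the $A_2$ Cartan matrix, yielding
\[
E_1 = L_2 - 2L_1, \qquad E_2 = L_1 - 2L_2
\]
in $\Pic(X)$.

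Next, I would identify nef classes on $X$ via the Lehn--Sorger factorization $X\to Y\to \bC^4/G$ from~\cite{LehnSorger}. The pullback to $X$ of a $Y$-ample class has positive intersection with $C_1$ and zero intersection with $C_2$ (the class of the generic fibre of the last blow-up), so by the previous step it is proportional to $L_1$; this forces $L_1$ to span one boundary ray of $\Nef(X)$. The other boundary is the shared ``flopping wall'' with the second resolution $X^+$ (which exists by~\cite{BellamySchedler}), and intersection numbers pin it down to the ray through $L_1+L_2$. An analogous argument on $X^+$, with the roles of the two exceptional divisors swapped, gives $\Nef(X^+)=cone(L_2, L_1+L_2)$. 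Dualising via $L_i\cdot C_j=\delta_{ij}$ then yields the claimed Mori cones $cone(C_2, C_1-C_2)$ and $cone(C_1, C_2-C_1)$. Since~\cite{BellamySchedler} asserts that there are exactly two projective symplectic resolutions and $\Pic(X)$ has rank two, the movable cone is the union
\[
\Mov(X) = cone(L_1, L_1+L_2) \cup cone(L_1+L_2, L_2) = cone(L_1, L_2).
\]

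The main obstacle is ruling out ``hidden'' extremal rays of $\overline{\NE}(X)$ coming from curves supported in the central fibre $\pi^{-1}(0)$ which are not deformable to $\bQ$-combinations of $C_1$ and $C_2$. Concretely, one has to analyse the four surface components of $\pi^{-1}(0)$ using the explicit blow-up description in~\cite{LehnSorger} and verify that every curve contained in them has numerical class inside $cone(C_2, C_1-C_2)$. A cleaner alternative is to exhibit directly a rational curve of class $C_1-C_2$ in $\pi^{-1}(0)$ (the flopping curve) and then invoke the count of exactly two projective symplectic resolutions to conclude that $\overline{\NE}(X)$ admits no further extremal rays, forcing the description above.
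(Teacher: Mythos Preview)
Your approach is sound in outline but differs substantially from the paper's, and it leans on external inputs that the paper deliberately avoids.

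For part~(1) the paper does not compute anything: it simply invokes \cite[Theorem~3.5]{AW}, a general result identifying $\Mov(X)$ for four-dimensional symplectic resolutions with the cone generated by the $L_i$. You instead recover $\Mov(X)$ at the end, as the union of the two nef cones; this is fine, but it makes part~(1) logically dependent on part~(2).

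For part~(2) the difference is more significant. The paper's argument (deferred to Proposition~\ref{proposition-mov-subdivision}) uses the $T$-action: the weight maps $\mu_i\colon N^1(X)\to\bR^2$ at the seven torus fixed points are computed explicitly, and one observes that a wall of the nef chamber must contract a curve and hence identify two fixed points, forcing $\mu_i(v)=\mu_j(v)$ for some $i\neq j$. Inspecting the table of weights, the only rays in $cone(L_1,L_2)$ on which such coincidences occur are those through $L_1$, $L_1+L_2$, and $L_2$. This pins down both chambers without ever touching the Lehn--Sorger blow-up description or analysing curves in the central fibre.

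Your route through the explicit factorisation $X\to Y\to\bC^4/G$ and the $A_2$ intersection matrix is the more classical one, and it can certainly be made to work. But the obstacle you flag---ruling out extra extremal rays supported in $\pi^{-1}(0)$---is exactly what the paper's torus-action argument sidesteps. Your proposed fixes (either dissecting the four surfaces via \cite{LehnSorger}, or exhibiting the flopping curve and then invoking the resolution count from \cite{BellamySchedler}) are both reasonable, but note that the paper treats the statement ``there are two symplectic resolutions'' as something to be \emph{proved} here, not cited; relying on \cite{BellamySchedler} for the count would short-circuit that. Also, your sentence ``intersection numbers pin it down to the ray through $L_1+L_2$'' is the crux and needs an actual computation of the flopping curve class---this is precisely the step the paper replaces by the equality $\mu_i(L_1+L_2)=\mu_j(L_1+L_2)$.
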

\begin{proof}\leavevmode
\begin{enumerate}
\item This follows from~\cite[Thm~3.5]{AW}.
\item We will prove the first part of the claim in Section~\ref{section:hilbert} as Proposition~\ref{proposition-mov-subdivision} (we will not use this result until then). The part on Mori cones then follows by taking dual cones.
\end{enumerate}
\end{proof}

The next theorem also follows from Theorem~\ref{thm:Cox-ring}, but as we need it before we prove this theorem it is proven independently in the Section~\ref{section:smoothness-of-GIT-quotient}.
\begin{theorem}
Taking GIT quotients of $\Spec \cR$ by the Picard torus action with respect to the linearization given by a character $(a,b)$ with $a>b>0$ and with $b>a>0$ one obtains the two symplectic resolutions of $\bC^{4}/G$.
\end{theorem}

\begin{remark}\label{remark:weights-example}
The weights of the $T$-action on global sections of the fixed line bundle $L$ on $X$ are lattice points in $\bZ^2$. Taking a convex hull one obtains a lattice polyhedron in $\bR^2$. For example fixing a line bundle $L=2L_1+L_2$ one gets a polyhedron with the recession cone equal to the positive quadrant of $\bR^2$ and with a head spanned by the lattice points from the picture below:
\begin{center}
\begin{tikzpicture}[scale=0.3]
\foreach \Point in {(0,16),(0,20),(0,22),(1,11),(1,13),(1,15),(1,17),(1,19),(1,21),(2,10),(2,12),(2,14),(2,16),(2,18),(2,20),(2,22),(3,7),(3,9),(3,11),(3,13),(3,15),(3,17),(3,19),(3,21),(4,8),(4,10),(4,12),(4,14),(4,16),(4,18),(4,20),(4,22),(5,5),(5,7),(5,9),(5,11),(5,13),(5,15),(5,17),(5,19),(5,21),(6,4),(6,6),(6,8),(6,10),(6,12),(6,14),(6,16),(6,18),(6,20),(6,22),(7,5),(7,7),(7,9),(7,11),(7,13),(7,15),(7,17),(7,19),(7,21),(8,4),(8,6),(8,8),(8,10),(8,12),(8,14),(8,16),(8,18),(8,20),(8,22),(9,3),(9,5),(9,7),(9,9),(9,11),(9,13),(9,15),(9,17),(9,19),(9,21),(10,2),(10,4),(10,6),(10,8),(10,10),(10,12),(10,14),(10,16),(10,18),(10,20),(10,22),(11,3),(11,5),(11,7),(11,9),(11,11),(11,13),(11,15),(11,17),(11,19),(11,21),(12,2),(12,4),(12,6),(12,8),(12,10),(12,12),(12,14),(12,16),(12,18),(12,20),(12,22),(13,3),(13,5),(13,7),(13,9),(13,11),(13,13),(13,15),(13,17),(13,19),(13,21),(14,2),(14,4),(14,6),(14,8),(14,10),(14,12),(14,14),(14,16),(14,18),(14,20),(14,22),(15,1),(15,3),(15,5),(15,7),(15,9),(15,11),(15,13),(15,15),(15,17),(15,19),(15,21),(16,2),(16,4),(16,6),(16,8),(16,10),(16,12),(16,14),(16,16),(16,18),(16,20),(16,22),(17,1),(17,3),(17,5),(17,7),(17,9),(17,11),(17,13),(17,15),(17,17),(17,19),(17,21),(18,2),(18,4),(18,6),(18,8),(18,10),(18,12),(18,14),(18,16),(18,18),(18,20),(18,22),(19,1),(19,3),(19,5),(19,7),(19,9),(19,11),(19,13),(19,15),(19,17),(19,19),(19,21),(20,0),(20,2),(20,4),(20,6),(20,8),(20,10),(20,12),(20,14),(20,16),(20,18),(20,20),(20,22),(21,1),(21,3),(21,5),(21,7),(21,9),(21,11),(21,13),(21,15),(21,17),(21,19),(21,21),(22,2),(22,4),(22,6),(22,8),(22,10),(22,12),(22,14),(22,16),(22,18),(22,20),(22,22),(23,1),(23,3),(23,5),(23,7),(23,9),(23,11),(23,13),(23,15),(23,17),(23,19),(23,21),(24,0),(24,2),(24,4),(24,6),(24,8),(24,10),(24,12),(24,14),(24,16),(24,18),(24,20),(24,22),(25,1),(25,3),(25,5),(25,7),(25,9),(25,11),(25,13),(25,15),(25,17),(25,19),(25,21),(26,0),(26,2),(26,4),(26,6),(26,8),(26,10),(26,12),(26,14),(26,16),(26,18),(26,20),(26,22)}{
\draw[black] \Point circle[radius=5pt];
\fill[black] \Point circle[radius=5pt];}

\draw[black] (0,22)--(0,20)--(0,16)--(1,11)--(3,7)--(5,5)--(6,4)--(10,2)--(20,0)--(24,0)--(26,0);

\coordinate (A) at (0,16);
\node at (A) [left = 1mm of A]{\small $(0,16)$};
\coordinate (B) at (1,11);
\node at (B) [left = 1mm of B]{\small $(1,11)$};
\coordinate (C) at (3,7);
\node at (C) [left = 1mm of C]{\small $(3,7)$};
\coordinate (D) at (5,5);
\node at (D) [left = 1mm of D]{\small $(5,5)$};
\coordinate (E) at (6,4);
\node at (E) [left = 1mm of E]{\small $(6,4)$};
\coordinate (F) at (10,2);
\node at (F) [below left = 1mm of F]{\small $(10,2)$};
\coordinate (G) at (20,0);
\node at (G) [below = 1mm of G]{\small $(20,0)$};
\end{tikzpicture}
\end{center}
By~\cite[Lem~2.4(c)]{BWW} if $L$ is globally generated, then marked vertices of this polyhedron correspond to $T$-fixed points of $X$ where $X$ is the resolution on which $L$ is relatively ample. We will see in~\ref{lemma-polytopes} that indeed fixed points of this polytope correspond to points in $X^T$ and in Lemma~\ref{lemma:ggbundles} that $L$ is globally generated.
\end{remark}

\subsection{The structure of the central fibre}\label{section:central-fibre}

In this section we study the structure of the central fibre $\varphi^{-1}([0])$ of such a resolution $\varphi\colon X\to \bC^4/G$ using the ideal of relations between generators of the ring $\cR$, under the assumption that $X = \Spec \cR/\!/_{L}\bT$ for some linearization $L$.The results of this section are used in the next one, where we investigate the action of the two-dimensional torus $T$ on $X$ with the fixed point locus $X^T$ contained in the central fibre. The additional assumption that $X = \Spec \cR/\!/_L\bT$ is dealt with in Section~\ref{section:smoothness-of-GIT-quotient}.
\begin{lemma}\label{lemma:inclusion-of-invariants}
We have an isomorphism $\Spec \cR^{\bT}\cong \bC^{4}/G$. In particular the inclusion of invariants $\cR^{\bT}\subset \cR$ induce map $p\colon \Spec \cR \to \bC^4/G$.
\end{lemma}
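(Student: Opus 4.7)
The plan is to identify $\cR^{\bT}$ directly inside the ambient ring $A := \bC[x_1,y_1,x_2,y_2]^{[G,G]}[t_1^{\pm 1},t_2^{\pm 1}]$ into which $\cR$ embeds by the construction of~\cite[\S~2.1]{SymplCox}. Comparing the Picard degrees in the first two rows of the degree matrix of Theorem~\ref{thm:Cox-ring} with the $(t_1,t_2)$-bidegrees of the explicit generators (noting in particular $s=t_1^{-2}t_2$ and $t=t_1t_2^{-2}$), the Picard $\bZ^2$-grading on $\cR$ is precisely the restriction of the $t$-grading on $A$. Since the $\bT$-action on $\cR$ is given by this grading, I conclude
\[
\cR^{\bT} \;=\; \cR \cap \bC[x_1,y_1,x_2,y_2]^{[G,G]},
\]
so the task reduces to identifying this intersection with $\bC[x_1,y_1,x_2,y_2]^G$, whose spectrum is $\bC^4/G$ by definition.

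For the inclusion $\supseteq$, I would first check directly on the matrix generators of $G$ from the introduction that $w_{01},\ldots,w_{07}$ are actually $G$-invariant (not merely $[G,G]$-invariant), and then verify that they generate $\bC[x_1,y_1,x_2,y_2]^G$ as a $\bC$-algebra. Both are finite computations, and the latter can be done via a Molien-series comparison using the seven conjugacy classes from Proposition~\ref{structure-of-G}(1), or by matching with the explicit presentation of this invariant ring in~\cite[\S~1]{LehnSorger}. For the reverse inclusion $\subseteq$, I would extend the natural $G/[G,G]\cong\bZ/3$-action on $\bC[x]^{[G,G]}$ to $A$ by letting a chosen generator act by $t_1\mapsto\omega t_1$ and $t_2\mapsto\omega^2 t_2$, where $\omega$ is a primitive cube root of unity; these characters are forced (up to complex conjugation) by invariance of $s$ and $t$, which yield $\chi_2=\chi_1^2$ and $\chi_1^3=1$. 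A direct verification then confirms that the polynomial parts of $w_{1j}$, $w_{2j}$, $w_3$ transform by the characters $\omega^{2}$, $\omega$, $1$ respectively, so all $20$ generators lie in $A^{\bZ/3}$. Intersecting with $\bC[x]^{[G,G]}$ then gives $\cR^{\bT}\subseteq\bC[x]^{[G,G],\bZ/3}=\bC[x]^G$.

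The main obstacle is the finite but tedious bookkeeping in the second paragraph: confirming the $G$-invariance of the seven $w_{0i}$ and the precise $\bZ/3$-character of each of the remaining polynomial parts, together with the Hilbert-series matching needed to see that the $w_{0i}$ generate all of $\bC[x]^G$. Once both inclusions are in place, $\Spec\cR^{\bT}\cong\bC^4/G$, and the inclusion $\cR^{\bT}\hookrightarrow\cR$ dualizes to the required morphism $p\colon\Spec\cR\to\bC^4/G$.
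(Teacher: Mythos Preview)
Your proposal is correct and follows essentially the same route as the paper, which simply asserts that $\cR^{\bT}=\bC[x_1,y_1,x_2,y_2]^{G}$ via the embedding $\cR\subset\bC[x_1,y_1,x_2,y_2]^{[G,G]}[t_1^{\pm1},t_2^{\pm1}]$. You have supplied the detailed verification of both inclusions (the $\bZ/3$-equivariance argument and the check that the $w_{0i}$ generate the full $G$-invariant ring) that the paper leaves implicit; nothing in your outline is problematic, only more explicit.
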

\begin{proof}
This follows from the fact that $\cR^{\bT} = \bC[x_1,x_2,x_3,x_4]^{G}$ via the embedding $\cR\subset \bC[x_1,x_2,x_3,x_4]^{[G,G]}[t_1^{\pm 1},t_2^{\pm 1}]$.
\end{proof}

Let $Z=p^{-1}([0])$. Decomposing the ideal of relations from the presentation of $\Spec \cR$ one obtains the decomposition of $Z$ into irreducible components. We consider the closed embedding $\Spec \cR\subset \bC^{20}$ given by the generators of $\cR$ from statement of Theorem~\ref{thm:Cox-ring}.

\begin{proposition}\label{prop:compontents-over-fibre}
The components of $Z$ are the following subvarieties of $\bC^{20}$:
{\tiny
\begin{align*}
Z_u & = V(w_{3},w_{ij}\ |\ (i,j)\in {(0,1),\ldots,(0,7),(1,1),\ldots,(1,5),(2,1),\ldots,(2,5)}),\\
Z_0 & = V(s, t, w_{25}, w_{24}, w_{15}, w_{14}, w_{07}, w_{06}, w_{05}, w_{04}, w_{03}, w_{02}, w_{01},\\
& w_{12}w_{22}-w_{13}w_{23},\ w_{11}w_{21}-9w_{13}w_{23},\ w_{21}^3-27w_{22}w_{23}^2,\ w_{13}w_{21}^2-3w_{11}w_{22}w_{23},\\
& w_{12}w_{21}^2-3w_{11}w_{23}^2,\ 3w_{13}^2 w_{21}-w_{11}^2 w_{22},\ 3 w_{12}w_{13}w_{21}-w_{11}^2 w_{23},\ w_{11}^3-27w_{12}w_{13}^2)\\
Z_P & = V(s, w_{25}, w_{24}, w_{23}, w_{22}, w_{21}, w_{15}, w_{14}, w_{07}, w_{06}, w_{05}, w_{04}, w_{03}, w_{02}, w_{01},\\
& w_{11}^3-27w_{12}w_{13}^2-i\sqrt{3}w_{3}^2 t),\\
Z_P' & = V(t, w_{25}, w_{24}, w_{15}, w_{14}, w_{13}, w_{12}, w_{11}, w_{07}, w_{06}, w_{05}, w_{04}, w_{03}, w_{02}, w_{01},\\
& w_{21}^3-27w_{22}w_{23}^2+i\sqrt{3}w_{3}^2 s),\\
Z_1 & = V(s, w_{25}, w_{24}, w_{23}, w_{21}, w_{15}, w_{12}, w_{07}, w_{06}, w_{05}, w_{04}, w_{03}, w_{02}, w_{01},\\
& 2w_{3}w_{22}t+\zeta_3w_{11}w_{14},\ 2w_{11}^2w_{22}+\zeta_{12}^{7}\sqrt{3}w_{3}w_{14},\ w_{11}^3-i\sqrt{3}w_{3}^2 t,\ 4w_{11}w_{22}^2 t+\zeta_{12}^{5}\sqrt{3}w_{14}^2),\\
Z_2 & = V(t, w_{25}, w_{22}, w_{15}, w_{14}, w_{13}, w_{11}, w_{07}, w_{06}, w_{05}, w_{04}, w_{03}, w_{02}, w_{01},\\
& 2w_{3}w_{12}s+\zeta_6 w_{21}w_{24},\ w_{21}^3+i\sqrt{3}w_{3}^2 s,\ 2w_{12}w_{21}^2+\zeta_{12}^{11}\sqrt{3}w_{3}w_{24},\ 4w_{12}^2 w_{21}s+\zeta_{12}^{7}\sqrt{3}w_{24}^2),
\end{align*}
}

where $\zeta_3, \zeta_6, \zeta_{12}$ are primitive 3rd, 6th and 12th roots of unity. The component $Z_{u}$ is contained in the locus of unstable points with respect to any linearization of the Picard torus via character from the movable cone. Points in the component $Z_P'$ are unstable with respect to any linearization by a character $(2,1)$ and points in the component $Z_P$ are unstable with respect to any linearization by a character $(1,2)$.
\end{proposition}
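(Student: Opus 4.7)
The plan is to compute the primary decomposition of the ideal defining $Z$ in $\cR$ by a Gr\"obner-basis calculation, and then argue the (un)stability assertions by a direct monomial-counting argument using the two gradings on $\cR$.

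To set up the ideal, I recall by Lemma~\ref{lemma:inclusion-of-invariants} that $p$ corresponds to the inclusion $\cR^{\bT} = \bC[x_1,\ldots,x_4]^G \subset \cR$, so $I_Z$ is generated by $p^{*}\mathfrak{m}_{[0]}$. Among the 20 generators of $\cR$, only $w_{01},\ldots,w_{07}$ lie in $\Pic$-degree $(0,0)$, and a direct check shows that their common zero locus in $\bC^4$ is $\{0\}$, hence $V(w_{01},\ldots,w_{07}) = Z$ as sets in $\Spec\cR$. I would then input the 20 generators of $\cR$ together with the ideal of relations between them (available as part of theorem~\ref{thm:Cox-ring}) into a CAS such as Singular or Macaulay2 and ask for the minimal associated primes of $I_Z$. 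Comparing the CAS output with the six ideals in the statement is then straightforward: each of $I(Z_u), I(Z_0), I(Z_P), I(Z_P'), I(Z_1), I(Z_2)$ is checked to be prime by inspecting its quotient ring, to contain $I_Z$ by direct substitution, and their intersection is checked to equal $\sqrt{I_Z}$.

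For the instability statements, I use the standard criterion that $x\in\Spec\cR$ is $\bT$-unstable with linearization $\chi$ iff every $f\in \cR_{m\chi}$ with $m>0$ vanishes at $x$; since $\cR$ is generated by the 20 displayed elements, it suffices to show that every monomial in these generators of $\Pic$-degree $m\chi$ vanishes at $x$. All $w$-generators have $\Pic$-degree in $\bZ_{\geq 0}^2$, while $s$ contributes $(-2,1)$ and $t$ contributes $(1,-2)$. On $Z_u$ every $w$-generator vanishes, so non-vanishing monomials are of the form $s^d t^e$ with $\Pic$-degree $(-2d+e,\,d-2e)$; the only such degree in $\Mov(X)=\bR_{\geq 0}^2$ is $(0,0)$, so $Z_u$ is unstable with respect to every character from $\Mov(X)$. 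For $Z_{P}'$, the surviving generators are $w_{21},w_{22},w_{23},w_3,s$; a monomial $s^a w_{21}^{b_1} w_{22}^{b_2} w_{23}^{b_3} w_3^c$ of $\Pic$-degree $m(2,1)$ with $m>0$ satisfies $c-2a=2m$ and $a+b_1+b_2+b_3+c=m$ in non-negative integers, forcing $b_1+b_2+b_3 = -m-3a<0$, a contradiction. The case of $Z_P$ with character $(1,2)$ is the mirror image, obtained by swapping the roles of $s,t$ and of $w_{1\bullet},w_{2\bullet}$.

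The main obstacle is the primary decomposition itself: a 20-variable ideal with the relevant relations is intractable by hand, and a na\"ive computation may return primary components rather than primes, or include embedded components. The proof is thus computer-assisted, and care must be taken to verify both the reducedness of each returned ideal and the completeness of the decomposition.
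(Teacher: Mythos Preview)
Your approach matches the paper's: the proposition is stated without a formal proof, the preceding sentence simply says ``Decomposing the ideal of relations from the presentation of $\Spec \cR$ one obtains the decomposition of $Z$ into irreducible components,'' i.e.\ exactly the CAS primary decomposition you outline. Your direct monomial-degree argument for the instability of $Z_u$, $Z_P$, $Z_P'$ is a small but genuine addition: the paper does not argue these claims in the proposition itself but relies on the computer computation of the unstable locus in Lemma~\ref{lemma:stability} (via the \texttt{gitcomp} library), whereas your counting argument is elementary, transparent, and independent of that later lemma.
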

\begin{proof}
The first part follows by decomposing ideal of $Z$ in Singular. The statements concerning stability are the consequences of Lemma~\ref{lemma:stability} below.
\end{proof}

\begin{lemma}\label{lemma:stability}
The unstable locus of $\Spec \cR$ with respect to a linearization of the trivial line bundle by a character $(2,1)$ is cut out by equations:
\begin{equation*}
w_{12}s = w_{12}w_{23}=w_{11}w_{3} = w_{12}w_{3} = w_{13}w_{3} = w_{13}w_{22} = w_{22}t = 0.
\end{equation*}
Moreover, all the semistable points of $Z$ are stable and have trivial isotropy groups.
\end{lemma}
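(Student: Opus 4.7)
The plan is to apply the Hilbert--Mumford numerical criterion for torus actions on affine varieties, combined with the explicit presentation of $\cR$ from theorem~\ref{thm:Cox-ring}. Reading off the first two rows of the degree matrix, the Picard-torus weights of the generators are $(0,0)$ for $w_{0\ast}$ (irrelevant), $(1,0)$ for $w_{1\ast}$, $(0,1)$ for $w_{2\ast}$, $(1,1)$ for $w_3$, $(-2,1)$ for $s$, and $(1,-2)$ for $t$. A point $x\in\Spec\cR$ is $(2,1)$-semistable iff $(2,1)$ lies in the convex cone in $\bR^2$ spanned by the weights of the generators nonvanishing at $x$; by Carath\'eodory's theorem it suffices to consider two-element subsets, and a direct enumeration produces exactly five minimal good pairs: $\{(1,0),(0,1)\}$, $\{(1,0),(1,1)\}$, $\{(1,0),(-2,1)\}$, $\{(0,1),(1,-2)\}$, and $\{(1,1),(1,-2)\}$. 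In each case $(2,1)$ lies in the strict interior of the spanned cone.

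A short logical simplification of the five resulting ``miss at least one element of this pair'' conditions shows that the unstable locus of the ambient $\bC^{20}$ is the union of three coordinate subspaces $V(I_1)\cup V(I_2)\cup V(I_3)$, where $I_1=(w_{11},\ldots,w_{15},t)$, $I_2=(w_{11},\ldots,w_{15},w_{21},\ldots,w_{25},w_3)$, and $I_3=(w_{21},\ldots,w_{25},w_3,s)$. The seven products listed in the statement evidently vanish on each $V(I_j)$, so they vanish on the unstable locus of $\Spec\cR$. The converse inclusion is the heart of the lemma: vanishing of the seven products at $x\in\Spec\cR$ must force $x\in V(I_1)\cup V(I_2)\cup V(I_3)$. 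In the ambient $\bC^{20}$ this converse fails (e.g.\ a point with only $w_{12}$ and $w_{21}$ nonzero satisfies all seven equations yet is semistable via the pair $\{(1,0),(0,1)\}$), so the relations of $\cR$ must rule such configurations out. I would verify this by a Gr\"obner basis computation in Macaulay2 using the explicit ideal of relations from Yamagishi's algorithm, checking that the radicals of $(w_{12}s,\,w_{12}w_{23},\,w_{11}w_3,\,w_{12}w_3,\,w_{13}w_3,\,w_{13}w_{22},\,w_{22}t)$ and $I_1\cdot I_2\cdot I_3$ coincide in $\cR$. This is the main computational obstacle.

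For the second claim, at a semistable point of $Z$ some minimal good pair is realized by nonvanishing generators, and since $(2,1)$ lies in the strict interior of each minimal cone the orbit is closed, so the point is already stable. Triviality of the stabilizer amounts to the weights of the nonvanishing generators spanning the full character lattice $\bZ^2$; four of the five minimal pairs have determinant $\pm 1$ and hence span $\bZ^2$, while $\{(1,1),(1,-2)\}$ alone spans only an index-$3$ sublattice. Going through the components of $Z$ from proposition~\ref{prop:compontents-over-fibre}, the defining relations (for instance $w_{11}^3-27w_{12}w_{13}^2-i\sqrt{3}\,w_3^2t=0$ on $Z_P$ and its analogue on $Z_P'$) force at least one further weighted generator to be nonzero whenever both $w_3$ and $t$ are nonvanishing, so the weight set always spans $\bZ^2$ at a semistable point of $Z$, giving the trivial-stabilizer claim.
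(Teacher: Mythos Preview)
Your approach is correct in outline and considerably more explicit than the paper's own argument, which is a single sentence: ``This can be done by a computer calculation, using the Singular library \texttt{gitcomp} by Maria Donten-Bury.'' That library carries out exactly the Hilbert--Mumford analysis you describe and then intersects with the ideal of relations of $\cR$, so the two approaches are the same in substance; you have simply unpacked what the black box does.

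A few remarks on details. Your enumeration of the five minimal weight pairs containing $(2,1)$ in their interior is correct, and your simplification of the unstable locus in $\bC^{20}$ to $V(I_1)\cup V(I_2)\cup V(I_3)$ is right. You correctly isolate the one genuinely computational step: showing that, modulo the ideal of relations of $\cR$, the seven listed monomials cut out the same closed set as $I_1\cap I_2\cap I_3$. This is precisely the content of the \texttt{gitcomp} call in the paper, and it does require the relations (your counterexample in the ambient $\bC^{20}$ is apt). Note that you have not actually performed this computation; as written, your proof is a reduction to a computer check rather than a completed argument, which matches the paper's level of detail.

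For stability, your reasoning is sound but could be stated more cleanly: since $(2,1)$ is not a positive multiple of any occurring weight, it cannot lie on an extremal ray of the weight cone at any point, so it lies in the interior whenever it lies in the cone at all; hence semistable implies stable. For trivial isotropy on $Z$, your case analysis is correct. The only configuration with index~$>1$ is when the nonvanishing $\bT$-weighted coordinates are exactly $w_3$ and $t$; tracing through the components of $Z$ in proposition~\ref{prop:compontents-over-fibre} (which logically precedes this lemma), the only components on which both $t\neq 0$ and $w_3\neq 0$ are possible are $Z_P$ and $Z_1$, and on each the cubic relation in $w_{11}$ forces some $w_{1j}\neq 0$, giving a weight pair of determinant~$\pm 1$. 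Your mention of $Z_{P}'$ is a harmless slip, since $t=0$ there and the issue does not arise.
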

\begin{proof}
This can be checked with a computer calculation, using the Singular library gitcomp by Maria Donten-Bury (see~\url{www.mimuw.edu.pl/~marysia/gitcomp.lib}).
\end{proof}

The following theorem gives a description of components of the central fibre. Let $W$ be the locus of stable points of $\Spec \cR$ with respect to the $\bT$-action linearized by a character $(2,1)$ (the case $(1,2)$ is analogous) and consider the quotient map $W\to X$. Denote by $S_0,S_1,S_2,P$ the images of sets of stable points of the components $Z_0,Z_1,Z_2,Z_P$. Note that these are precisely the components of the central fibre of $X$.

\begin{theorem}\label{thm:central-fibre}\leavevmode
\begin{enumerate}[label=(\alph*)]
\item $S_0$ is a non-normal toric surface whose normalization is isomorphic to the Hirzebruch surface $\cH_{6}$. The action of $T$ on the normalization of $S_0$ is given by characters in the columns of the matrix $\left(\begin{smallmatrix} 1 & -1\\ -1 & -1\end{smallmatrix}\right)$.
\item $S_1$ is a non-normal toric surface whose normalization is the toric surface of a fan spanned by rays:
$(0,1),(1,0),(1,-1),(-1,-2)$. The action of $T$ on the normalization of $S_1$ is given by characters in the columns of the matrix $\left(\begin{smallmatrix} 3 & -1\\ 1 & -1\end{smallmatrix}\right)$.
\item $S_2$ is a non-normal toric surface whose normalization is the toric surface of a fan spanned by rays:
$(0,1),(1,-1),(-1,-2)$. The action of $T$ on the normalization of $S_2$ is given by characters in the columns of the matrix $\left(\begin{smallmatrix} -1 & 3\\ -1 & 1\end{smallmatrix}\right)$.
\item $P$ is isomorphic to $\bP^{2}$. The action of $T$ on $P$ in homogeneous coordinates is given by the matrix $\left(\begin{smallmatrix} 2 & 0 & 3\\ 2 & 4 & 1\end{smallmatrix}\right)$.
\end{enumerate}
\end{theorem}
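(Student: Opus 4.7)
My plan is to treat each component separately, following the same three-step strategy: use the explicit equations of Proposition~\ref{prop:compontents-over-fibre} together with Lemma~\ref{lemma:stability} to describe the $\bT$-semistable locus, take the GIT quotient, and identify the resulting surface together with the $T$-action induced by the $T$-weights of the generators recorded in Theorem~\ref{thm:Cox-ring}.

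Part (d) is cleanest and I would handle it first. On $Z_P$ the coordinates $w_{21},w_{22},w_{23},s$ vanish identically, so the products $w_{12}s$, $w_{12}w_{23}$, $w_{13}w_{22}$, $w_{22}t$ from Lemma~\ref{lemma:stability} are automatically zero, and the unstable locus inside $Z_P$ is cut out by $w_{11}w_3=w_{12}w_3=w_{13}w_3=0$. The semistable locus is therefore $\{w_3\neq 0\}\cap\{(w_{11},w_{12},w_{13})\neq 0\}$. The relation $w_{11}^3-27w_{12}w_{13}^2=i\sqrt{3}\,w_3^2 t$ solves uniquely for $t$ when $w_3\neq 0$, so $Z_P^{ss}\iso (\bA^3\setminus\{0\})\times\bC^*_{w_3}$ with $\bT$-weights $(1,0),(1,0),(1,0),(1,1)$. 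Using the $\bT$-orbit through any point with $w_3\neq 0$ to gauge-fix $w_3=1$, the stabilizer $\{(\lambda,\lambda^{-1})\}\subset\bT$ acts on the remaining coordinates as a diagonal $\bC^*$ with weights $(1,1,1)$, and the GIT quotient is $\bP^2$. The $T$-weights of $w_{11},w_{12},w_{13}$ from Theorem~\ref{thm:Cox-ring} are $(2,2),(0,4),(3,1)$, which are exactly the columns of the stated matrix.

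Parts (a)--(c) are harder because the quotients are non-normal, so I would construct explicit birational parametrizations and identify the normalizations as toric surfaces. For $Z_0$ the relations cut out a $3$-dimensional variety in $\bA^6$ (with $w_3$ a free factor). The cuspidal-cubic equations $w_{11}^3=27w_{12}w_{13}^2$ and $w_{21}^3=27w_{22}w_{23}^2$ admit the parametrizations $(w_{11},w_{12},w_{13})=(3a^2b,b^3,a^3)$ and $(w_{21},w_{22},w_{23})=(3c^2d,d^3,c^3)$, and the bidegree $(1,1)$ relations $w_{11}w_{21}=9w_{13}w_{23}$ and $w_{12}w_{22}=w_{13}w_{23}$ force $ac=bd$; I would resolve this conifold equation by $(a,b,c,d)=(sv,su,tu,tv)$ modulo the $\bC^*$-scaling $(s,t,u,v)\mapsto(\lambda s,\lambda t,\lambda^{-1}u,\lambda^{-1}v)$. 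Combined with the free factor $\bA^1_{w_3}$, this realises a $4$-dimensional toric normalization of $Z_0$. Descending the $\bT$- and $T$-actions (where $\bT$ lifts only up to a finite cyclic cover coming from choices of cube roots of unity in the cuspidal parametrization) and passing to the GIT quotient produces the non-normal toric surface $S_0$, whose normalization I identify with $\cH_6$ by matching $T$-fixed points and their compasses. Cases (b) and (c) follow the same pattern; the asymmetries introduced by the extra relations of $Z_1$ and $Z_2$ involving $w_{14}$ and $w_{24}$ yield the four- and three-ray fans of the statement, one maximal cone of which has determinant $-3$ and therefore carries the $A_2$-singularity on the normalization.

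The main obstacle is the rigorous identification of the normalizations in parts (a)--(c): my parametrizations are manifestly dominant and of the correct dimension, but I must argue they are normalizations rather than merely finite covers. I would settle this by combining two checks: first, using the polytope picture of Remark~\ref{remark:weights-example} to locate the $T$-fixed points of each GIT quotient and match them with the vertices predicted by the proposed fans; and second, by a local analysis at each fixed point to verify that the $T$-compass agrees with the primitive generators of the corresponding cones. Once the $T$-fixed-point data and compasses match those of the stated toric surfaces, the classification of toric surfaces completes the identification, and the $T$-action matrices are read off directly from the $T$-weights in Theorem~\ref{thm:Cox-ring}.
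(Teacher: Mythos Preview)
Your treatment of part~(d) is essentially the paper's argument: solve the hypersurface equation for $t$ on the locus $w_3\neq 0$, observe that the remaining $\bT$-quotient is the standard $\bP^2$ with homogeneous coordinates $w_{11},w_{12},w_{13}$, and read off the $T$-weights from the degree matrix.

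For parts (a)--(c) the paper takes a different and more direct route. The observation you are missing is that each $Z_i$ is, after a diagonal rescaling of the ambient coordinates, already an \emph{affine toric variety}: its ideal is prime, binomial, and contains no monomial, hence toric. Once $Z_0$ is written as $\Spec\bC[S]$ for an explicit affine semigroup $S\subset M\cong\bZ^4$, the Picard torus $\bT$ sits inside the dense torus of $Z_0$ via an explicit $2\times 4$ character matrix, the unstable orbits correspond to faces of the dual cone $\sigma$, and the GIT quotient is obtained by the standard procedure of removing those faces and projecting the remaining fan along the cokernel $Q$ of the cocharacter inclusion. The fan of $\cH_6$ drops out of a short matrix computation, and the $T$-action matrix is just $J\cdot Q^{t}$ where $J$ records the $T$-weights of the chosen lattice generators. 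No ad~hoc parametrization, no cube-root ambiguities, and no separate argument that the map is a normalization rather than a finite cover are required; the normalization is simply $\Spec\bC[\sigma^\vee\cap M]$.

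Your parametrization route could perhaps be pushed through, but two points deserve caution. First, the remark that ``$\bT$ lifts only up to a finite cyclic cover'' signals a genuine bookkeeping problem: your coordinates $(a,b,c,d)$ are not $\bT$-eigenvectors, so descending the GIT quotient requires tracking an extra $\mu_3$-action throughout. Second, your proposed verification via $T$-fixed points and compasses is circular in the paper's logic: the compasses of Theorem~\ref{thm-compasses} are computed \emph{using} the toric description of the components $S_i$ established here, so you cannot invoke them to confirm that description. An independent check would amount to computing the local $T$-weights directly from the equations of $Z_i$ and the $\bT$-quotient, which is effectively the toric-GIT computation the paper performs.
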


Proofs of (a)--(c) follow a fairly standard procedure of division of a toric variety by a torus action. We outline an argument in the case of $S_0$, and then we comment on (d).

\begin{proof}[Proof of (a)]
\begin{claim}
By rescaling variables we may assume that $Z_0$ is the toric variety embedded into $\bC^7$ with coordinates $w_{11},w_{12},w_{13},w_{21},w_{22},w_{23},w_{3}$ defined by the toric ideal generated by binomials:
\begin{align*}
& w_{12}w_{22}-w_{13}w_{23},\ w_{11}w_{21}-w_{13}w_{23},\ w_{21}^3-w_{22}w_{23}^2,\ w_{13}w_{21}^2-w_{11}w_{22}w_{23},\\
& w_{12}w_{21}^2-w_{11}w_{23}^2,\ w_{13}^2 w_{21}-w_{11}^2 w_{22},\ w_{12}w_{13}w_{21}-w_{11}^2 w_{23},\ w_{11}^3-w_{12}w_{13}^2.
\end{align*}
\end{claim}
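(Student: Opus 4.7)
The claim has two parts: (i) after a coordinate rescaling of $\bC^{7}$, the equations of $Z_{0}$ from proposition~\ref{prop:compontents-over-fibre} become the displayed pure binomials, and (ii) the resulting binomial ideal is toric. For (i), I would perform the linear change of variables $w_{11}\mapsto 3w_{11}$ and $w_{21}\mapsto 3w_{21}$, fixing the remaining five generators. A direct substitution shows each of the eight equations becomes a nonzero scalar multiple of its pure-binomial counterpart: for instance $w_{11}w_{21}-9w_{13}w_{23}$ turns into $9(w_{11}w_{21}-w_{13}w_{23})$, $w_{11}^{3}-27w_{12}w_{13}^{2}$ turns into $27(w_{11}^{3}-w_{12}w_{13}^{2})$, and the other six equations transform in the same manner. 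That the single choice $\lambda_{11}=\lambda_{21}=3$ simultaneously clears all eight equations is forced by any two of them (e.g.\ $\lambda_{11}^{3}=27$ and $\lambda_{11}\lambda_{21}=9$) and is consistent with the remaining six because every equation is $T$-homogeneous with respect to the grading of theorem~\ref{thm:Cox-ring}.

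For (ii), I would exhibit a monomial parametrization and argue that its closure is already $V(I)$, where $I$ denotes the binomial ideal. Guided by the cone relations $w_{11}^{3}=w_{12}w_{13}^{2}$ and $w_{21}^{3}=w_{22}w_{23}^{2}$ together with the couplings $w_{12}w_{22}=w_{13}w_{23}=w_{11}w_{21}$, I would try
\begin{equation*}
w_{11}=a^{3}bc^{2},\ w_{12}=a^{3}b^{3},\ w_{13}=a^{3}c^{3},\ w_{21}=b^{2}cd^{3},\ w_{22}=c^{3}d^{3},\ w_{23}=b^{3}d^{3},
\end{equation*}
with $w_{3}$ free. Checking all eight binomial relations is a routine monomial computation. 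The rescaling $(a,b,c,d)\mapsto (\lambda a,\lambda^{-1}b,\lambda^{-1}c,\lambda d)$ fixes every coordinate of the image, so the effective parameter torus is three-dimensional, and together with the free $w_{3}$-factor the Zariski closure of the image is a four-dimensional irreducible toric subvariety $T\subset \bC^{7}$ whose vanishing ideal contains $I$.

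The remaining -- and in fact main -- obstacle is to show that $I$ already \emph{equals} the vanishing ideal of this parametrization, equivalently that the rank-three sublattice of $\bZ^{6}$ spanned by the exponent differences of the listed binomials is saturated. This is not purely formal: \emph{a priori} one only knows that $T$ is an irreducible component of $V(I)$ of the expected dimension. I would dispatch this step either by exhibiting three of the exponent differences whose $3\times 6$ matrix has a unimodular $3\times 3$ minor (so the lattice already equals its saturation), or -- in keeping with the computational approach used elsewhere in the paper -- by a direct Singular check that $I$ coincides with the kernel of the monomial map above. Either route completes the identification of $Z_{0}$ with the claimed toric variety.
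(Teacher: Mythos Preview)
Your argument is correct, but it is considerably more hands-on than the paper's. The paper dispatches the entire claim in one line by invoking a general principle about binomial ideals (essentially Eisenbud--Sturmfels): since $Z_{0}$ arises as an irreducible component of $Z$, its defining ideal is \emph{prime}; a prime binomial ideal containing no monomials always admits a point with all coordinates nonzero, and rescaling so that this point becomes $(1,\ldots,1)$ forces every generating binomial to become a pure difference of monomials. Primality is preserved under the linear change, so the resulting pure-binomial ideal is automatically toric.

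Your route, by contrast, constructs the rescaling explicitly ($w_{11}\mapsto 3w_{11}$, $w_{21}\mapsto 3w_{21}$) and then exhibits a concrete monomial parametrization. This has the virtue of being completely explicit, but it creates the very ``main obstacle'' you flag at the end: you must still check that $I$ equals the kernel of your monomial map, either by a lattice-saturation argument or a computer verification. The paper's approach sidesteps this entirely, because once you know the ideal is prime (which is input from the primary decomposition in proposition~\ref{prop:compontents-over-fibre}), toric-ness of the rescaled ideal is automatic and no further check is needed. In short: your proof works, but you are re-proving primality through the back door when the paper simply uses it.
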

\begin{proof}[Proof of the claim]
This is a general argument (over an algebraically closed field) to reduce a prime binomial ideal that does not contain monomials to a toric ideal: one takes a point $(a_{ij},a_{3})$ with all $a_{ij}\neq 0\neq a_{3}$ in the zero set of the original ideal and then set new coordinates $w_{ij}' = \frac{1}{a_{ij}}w_{ij}, \ w_{3}' = \frac{1}{a_{3}}w_{3}$.
\end{proof}

\begin{claim}
The normalization of $Z_0$ is the affine variety of the cone $\sigma^{\vee} = cone(-2e_2+3e_3,e_1,e_1+3e_2-3e_3,e_2,e_4)$ in space $\bR^4$ spanned by the character lattice $M=\bZ^4$ of four-dimensional torus.
\end{claim}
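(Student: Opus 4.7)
The plan is to identify $\wt{S}$, the normalization of $Z_0$, with the affine toric variety $\Spec \bC[\sigma^{\vee}\cap M]$ of the stated cone in a four-dimensional character lattice $M\cong\bZ^{4}$. I would proceed in three steps: (i) exhibit the seven generators $w_{11},w_{12},w_{13},w_{21},w_{22},w_{23},w_{3}$ as monomials in $\bC[M]$; (ii) determine the rational cone they span in $M_{\bR}$; (iii) verify saturation of the resulting affine semigroup, from which the normalization is read off by the standard recipe for affine toric varieties.

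For step (i), the four-dimensional torus acting on $Z_{0}$ is $T\times\bT$, and its weights on the generators of the ambient $\bC^{7}$ can be read off from the last two rows (for $T$) together with the first two rows (for $\bT$) of the degree matrix in theorem~\ref{thm:Cox-ring}. Stacking these four weights produces a lattice vector $v_{ij}$ or $v_{3}$ in $M=\bZ^{4}$ for each generator. As a sanity check, and to confirm that $Z_{0}=\Spec\bC[S]$ for the affine semigroup $S$ that the vectors generate, one verifies directly that each of the eight binomials displayed in the first claim is precisely a lattice syzygy among these seven vectors; for instance $w_{12}w_{22}=w_{13}w_{23}$ corresponds to $v_{12}+v_{22}=v_{13}+v_{23}$.

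For step (ii), the normalization of the semigroup algebra $\bC[S]$ is $\bC[\mathrm{cone}(S)\cap M']$, where $M'$ is the sublattice generated by $S$. It therefore suffices to compute this cone and sublattice. From the relations $w_{11}w_{21}=w_{12}w_{22}=w_{13}w_{23}$ the exponents $v_{11}$ and $v_{21}$ lie in the relative interior of two-dimensional faces, so the extreme rays of the cone come from the remaining five vectors $v_{12},v_{13},v_{22},v_{23}$ and $v_{3}$. After choosing a basis of $M$ adapted to these rays one obtains the five generators $-2e_{2}+3e_{3},\ e_{1},\ e_{1}+3e_{2}-3e_{3},\ e_{2},\ e_{4}$ of the claim, and the same basis change simultaneously exhibits $M'=M\cong\bZ^{4}$ because a $4\times 4$ submatrix of the weight data has determinant $\pm 1$.

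The last step (iii), checking that $S$ is saturated, is the place I expect to do the most work. Because $\sigma^{\vee}$ has only five rays and is cut out by five explicit linear inequalities, the Hilbert basis of $\sigma^{\vee}\cap M$ is finite and can be listed; one then expresses each Hilbert basis element as a nonnegative integer combination of $v_{11},\ldots,v_{23},v_{3}$, which can be handled by a short computer-algebra calculation. The main conceptual obstacle is not any single step but the bookkeeping in step (ii): choosing the correct basis of $M$ so that the five extreme rays assume the precise form stated in the claim. Once that basis is fixed, the remaining verifications are routine.
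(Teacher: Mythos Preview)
Your steps (i) and (ii) are a valid and slightly more geometric alternative to the paper's approach: the paper extracts the seven lattice vectors by the standard recipe of \cite[Thm~1.1.17]{CLS} (taking the cokernel of the lattice $L\subset\bZ^{7}$ of exponent differences of the binomials), whereas you read them off from the $T\times\bT$-grading. Both give the same semigroup $S\subset M$ up to a choice of basis, and in either case one then observes that $v_{11}$ and $v_{21}$ are redundant as cone generators because $3v_{11}=v_{12}+2v_{13}$ and $3v_{21}=v_{22}+2v_{23}$, leaving the five rays in the claim. One small correction to your step (ii): the image of the raw weight map $\bZ^{7}\to\bZ^{4}$ is not all of $\bZ^{4}$ but a sublattice of index $2$ (the sum of the two $T$-coordinates of every generator is even), so no $4\times 4$ minor of the weight matrix is $\pm 1$; you must pass to a basis of the image lattice $M'$ to obtain the coordinates stated in the claim.

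The real problem is step (iii). You propose to verify that $S$ is saturated by expressing every Hilbert-basis element of $\sigma^{\vee}\cap M$ as a nonnegative integer combination of the seven generators. This is both unnecessary and false. It is unnecessary because, as you yourself say in step (ii), the normalization of $\Spec\bC[S]$ is automatically $\Spec\bC[\sigma^{\vee}\cap M']$; once the cone and the lattice are identified the claim is complete. It is false because $Z_{0}$ is \emph{not} normal: the non-normal locus of $S_{0}$ described in theorems~\ref{thm:central-fibre}(a) and~\ref{thm:non-normal-locus}(a) is precisely the image of the non-normal points of $Z_{0}$ under the $\bT$-quotient (this is how the paper computes it, via lemma~\ref{lemma:non-normal-points-toric-variety}). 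So the Hilbert-basis computation you sketch would produce elements of $\sigma^{\vee}\cap M$ not lying in $S$. Drop step (iii); steps (i) and (ii), once the basis of $M'$ is correctly handled, already prove the claim.
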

\begin{proof}[Proof of the claim]
By a calculation as in proof of~\cite[Thm~1.1.17]{CLS} $Z_0$ is the affine toric variety defined by lattice points in $M = \bZ^4$:
\begin{align*}
& v_{11} = e_1+e_2-e_3,\quad v_{12} = e_1+3e_2-3e_3,\quad v_{13} = e_1,\\
& v_{21} = e_3,\quad v_{22} = -2e_2+3e_3,\quad  v_{23} = e_2, \quad v_3 = e_4.
\end{align*}
In other words there is a short exact sequence $0\to L \to \bZ^7\stackrel{\varphi}{\to} M = \bZ^4\to 0$,
where $\varphi$ sends the canonical base $e_i$ to the vectors $v_{ij}$ and $v_3$ in order written as above and $L$ is a subgroup of $\bZ^7$ defined by the binomial ideal of $Z_0$, i.e. $L$ is generated by:
\begin{align*}
& (0,1,-1,0,1,-1,0),(-1,1,0,2,0,-2,0),(1,0,-1,1,0,-1,0),(-2,0,2,1,-1,0,0),\\
& (0,0,0,3,-1,-2,0),(-2,1,1,1,0,-1,0),(-1,0,1,2,-1,-1,0),(3,-1,-2,0,0,0,0).
\end{align*}
One checks that the vectors $v_{11}$ and $v_{21}$ are not needed to generate $\sigma^{\vee}$.
\end{proof}

\begin{claim}
The Picard torus acting on $Z_0$ may be viewed as $(\bC^{*})^2$ embedded into $T_M = (\bC^{*})^4$ by characters in the columns of the matrix:
\begin{equation*}
\begin{pmatrix}
1 & 0 & 0 & 1\\
0 & 1 & 1 & 1
\end{pmatrix}.
\end{equation*}
\end{claim}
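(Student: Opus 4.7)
The plan is to describe the embedding $\bT \hookrightarrow T_M$ dually, via its induced map on character lattices $M \to \widehat{\bT} = \bZ^2$, and then verify the given matrix is correct by a diagram chase against the weight matrix of the ambient $(\bC^*)^7$-action.

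First, I would record the $\bT$-weights of the coordinates $w_{11},w_{12},w_{13},w_{21},w_{22},w_{23},w_{3}$, read off from the first two rows of the degree matrix in theorem~\ref{thm:Cox-ring}. Writing these as columns gives the weight matrix
\begin{equation*}
W=\begin{pmatrix} 1 & 1 & 1 & 0 & 0 & 0 & 1\\ 0 & 0 & 0 & 1 & 1 & 1 & 1\end{pmatrix}\colon \bZ^7\lra \bZ^2,
\end{equation*}
which is the character-lattice description of the composite homomorphism $\bT\to(\bC^*)^7$.

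Next, I would argue that this composite factors through $T_M\hookrightarrow(\bC^*)^7$. Equivalently, on character lattices $W$ must vanish on the kernel $L$ of $\varphi\colon\bZ^7\twoheadrightarrow M$ from the previous claim. This is immediate, because $L$ is generated by the exponent vectors of the binomial relations cutting out $Z_0$, and the ideal of $Z_0$ is $\bT$-invariant (since $\bT$ preserves the component $Z_0$ of $Z$), so every such binomial is $\bT$-homogeneous. Thus $W$ descends to a unique map $\overline{W}\colon M\to\bZ^2$.

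Finally, to identify $\overline{W}$ I would read off preimages of a basis of $M=\bZ^4$ from the list $v_{11},\ldots,v_{08}$ of the previous claim: one has $v_{13}=e_1$, $v_{23}=e_2$, $v_{21}=e_3$, $v_{08}=e_4$, so
\begin{equation*}
\overline{W}(e_1)=W(e_3^{(7)})=\binom{1}{0},\quad \overline{W}(e_2)=W(e_6^{(7)})=\binom{0}{1},\quad \overline{W}(e_3)=W(e_4^{(7)})=\binom{0}{1},\quad \overline{W}(e_4)=W(e_7^{(7)})=\binom{1}{1},
\end{equation*}
which are exactly the columns of the claimed matrix. As a sanity check one may recompute $\overline{W}(v_{11}),\overline{W}(v_{12}),\overline{W}(v_{22})$ and confirm they agree with the corresponding columns of $W$.

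I do not expect any real obstacle here; the only subtle point is orienting the duality correctly (the ``characters in the columns'' refer to $\widehat{\bT}$-valued images of the standard basis of $M$, not the other way around), and making sure the factorization of $W$ through $\varphi$ is justified by the $\bT$-homogeneity of the toric ideal of $Z_0$ rather than by a blind matrix computation.
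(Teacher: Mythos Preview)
Your proposal is correct and follows essentially the same route as the paper: the paper's proof is the one-liner ``the $i$-th column corresponds to $e_i\in M$, and these correspond to $w_{13},w_{23},w_{21},w_{3}$ respectively,'' which is exactly your final evaluation step. You simply add the surrounding justification (the factorization of $W$ through $\varphi$ via $\bT$-homogeneity of the toric ideal) that the paper leaves implicit.
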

\begin{proof}[Proof of the claim]
The $i$-th column corresponds to $e_i\in M,\ i =1,2,3,4$ and these correspond to $w_{13},w_{23},w_{21},w_{3}$ respectively.
\end{proof}

To obtain $\wt{S}=S_0$ from $Z_0$ we have to remove unstable orbits of $Z_0$ and divide the remaining open subset by the action of the Picard torus. By Lemma~\ref{lemma:stability} we have:
\begin{claim}
The unstable locus of $Z_0$ is the union of the closures of two orbits:
\begin{equation*}
O_1 = \{w_{11} = w_{12} = w_{13} = 0\},\qquad O_2=\{w_{21}=w_{22}=w_{23}=w_{3} = 0\}.
\end{equation*}
\end{claim}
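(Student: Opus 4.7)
The plan is to combine the description of the unstable locus of $\Spec\cR$ from lemma~\ref{lemma:stability} with the binomial equations defining $Z_0$, and then carry out a short case analysis.

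First I would observe that $Z_0$ is contained in the vanishing locus of $s$ and $t$, so the equations $w_{12}s=0$ and $w_{22}t=0$ from lemma~\ref{lemma:stability} are trivially satisfied on $Z_0$. Hence the unstable locus of $Z_0$ is cut out inside $Z_0$ by the five equations
\[
w_{12}w_{23}=w_{11}w_3=w_{12}w_3=w_{13}w_3=w_{13}w_{22}=0.
\]
The inclusion $\overline{O_1}\cup\overline{O_2}\subseteq\{\text{unstable}\}$ is immediate: on $\{w_{11}=w_{12}=w_{13}=0\}$ each of the five equations is trivially satisfied, and on $\{w_{21}=w_{22}=w_{23}=w_3=0\}$ likewise all of $w_{12}w_{23},w_{13}w_{22}$ and the three products with $w_3$ vanish.

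For the forward inclusion I would split into cases on whether $w_3=0$. If $w_3\neq 0$, the equations $w_{11}w_3=w_{12}w_3=w_{13}w_3=0$ force $w_{11}=w_{12}=w_{13}=0$, so the point lies in $\overline{O_1}$. If $w_3=0$ and $w_{12}=w_{13}=0$, the binomial relation $w_{11}^3-w_{12}w_{13}^2=0$ from $Z_0$ forces $w_{11}=0$, so again the point lies in $\overline{O_1}$. Finally, if $w_3=0$ and at least one of $w_{12},w_{13}$ is nonzero, then $w_{12}w_{23}=0$ and $w_{13}w_{22}=0$ force the corresponding one of $w_{23}$ or $w_{22}$ to vanish; chasing through the remaining binomial relations defining $Z_0$---in particular using $w_{21}^3=w_{22}w_{23}^2$ to conclude $w_{21}=0$, and then using the relations $w_{12}w_{22}=w_{13}w_{23}$ and $w_{11}w_{21}=w_{13}w_{23}$ together with the nonvanishing of $w_{12}$ or $w_{13}$---one deduces that all of $w_{21},w_{22},w_{23}$ vanish, so the point lies in $\overline{O_2}$.

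The only delicate point is the last case, where one has to grind through several of the eight binomials to eliminate $w_{21},w_{22},w_{23}$, but the calculation is purely mechanical symbol-pushing once it is set up. The main obstacle is thus organizational rather than conceptual: keeping the case analysis clean so that every subcase falls into exactly one of $\overline{O_1}$ or $\overline{O_2}$.
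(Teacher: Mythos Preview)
Your argument is correct and follows the same approach as the paper: the paper simply writes ``By lemma~\ref{lemma:stability} we have:'' and states the claim, leaving the set-theoretic intersection of those equations with the ideal of $Z_0$ implicit. Your case analysis on $w_3$ and subsequent use of the binomial relations $w_{11}^3=w_{12}w_{13}^2$, $w_{12}w_{22}=w_{13}w_{23}$, and $w_{21}^3=w_{22}w_{23}^2$ is exactly the routine verification the paper omits, and it goes through cleanly in each subcase.
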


One checks by~\cite[3.2.7]{CLS} that:
\begin{claim}
The orbits $O_1,\ O_2$ correspond respectively to the following faces of the cone $\sigma = cone(e_4,e_2+e_3,3e_2+2e_3,e_1,3e_1+e_3)$ dual to $\sigma^{\vee}$:
\begin{equation*}
\tau_1 = cone(e_1),\qquad \tau_2 = cone(e_4,e_2+e_3).
\end{equation*}
\end{claim}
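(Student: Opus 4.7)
The plan is to apply the orbit-cone correspondence for affine toric varieties \cite[3.2.7]{CLS}: the $T_M$-orbits on $U_\sigma \cong Z_0$ are indexed by the faces $\tau \preceq \sigma$, and the monomial coordinate $\chi^v$ for $v \in \sigma^\vee \cap M$ vanishes on the closure $V(\tau) = \overline{O(\tau)}$ precisely when $v \notin \tau^\perp$. Under the embedding fixed in the previous claim, the seven coordinates $w_{11}, w_{12}, w_{13}, w_{21}, w_{22}, w_{23}, w_{3}$ correspond to the lattice vectors $v_{11}, v_{12}, v_{13}, v_{21}, v_{22}, v_{23}, v_{08}$. Hence matching $O_i$ to $\tau_i$ reduces to checking membership of these seven vectors in $\tau_i^\perp$ and comparing with the given defining equations of $O_1$ and $O_2$.

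For $\tau_1 = cone(e_1)$ the orthogonal $\tau_1^\perp$ is the coordinate hyperplane spanned by $e_2, e_3, e_4$, and inspection of the list shows that the vectors with vanishing $e_1$-coefficient are exactly $v_{21}, v_{22}, v_{23}, v_{08}$, while $v_{11}, v_{12}, v_{13}$ all have $e_1$-coefficient $1$. This yields $V(\tau_1) = \{w_{11}=w_{12}=w_{13}=0\} = O_1$. For $\tau_2 = cone(e_4, e_2+e_3)$ the orthogonal is $\{m : m_4 = 0 \text{ and } m_2+m_3 = 0\}$; a direct check gives $v_{11}, v_{12}, v_{13} \in \tau_2^\perp$, whereas $v_{21}, v_{22}, v_{23}$ each violate $m_2+m_3 = 0$ and $v_{08}$ violates $m_4 = 0$, so $V(\tau_2) = \{w_{21}=w_{22}=w_{23}=w_3=0\} = O_2$.

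The one step that requires a bit of thought is verifying that $cone(e_4, e_2+e_3)$ is actually a face of $\sigma$; for $\tau_1$ this is automatic because $e_1$ is a ray generator of $\sigma$. The plan is to exhibit an explicit supporting hyperplane, and the vector $v_{11} = e_1+e_2-e_3 \in \sigma^\vee$ is a natural candidate. By direct pairing with the five generators of $\sigma$ one computes $\langle v_{11}, e_4\rangle = 0$, $\langle v_{11}, e_2+e_3\rangle = 0$, $\langle v_{11}, 3e_2+2e_3\rangle = 1$, $\langle v_{11}, e_1\rangle = 1$, $\langle v_{11}, 3e_1+e_3\rangle = 2$. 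Hence $v_{11}$ is nonnegative on $\sigma$ and cuts out exactly $cone(e_4, e_2+e_3)$, which is therefore a face. Once this is in place the orbit-cone dictionary of \cite[3.2.7]{CLS} identifies $O_i$ with $V(\tau_i)$ and concludes the claim.
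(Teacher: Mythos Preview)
Your proof is correct and follows exactly the approach indicated in the paper, which simply cites \cite[3.2.7]{CLS} and leaves the verification to the reader. You have supplied the details of that verification, including the extra care of exhibiting $v_{11}$ as a supporting functional to confirm that $cone(e_4,e_2+e_3)$ is indeed a face of $\sigma$.
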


To obtain the fan of the toric variety $S=S_0$ we consider the fan of $Z_0$, remove cones $\tau_1$ and $\tau_2$ together with all the cones containing them and take the family of the images of the remaining cones via the dual of the kernel of the matrix from Claim~3, i.e. by:
\begin{equation*}
Q := \begin{pmatrix}
0 & -1 & 1 & 0\\
-1 & -1 & 0 & 1
\end{pmatrix}.
\end{equation*}

Now $Q(e_4) = (0,1), \ Q(e_2+e_3) = (0,-1), \ Q(3e_2 + 2e_3) = (-1,-3), \ Q(3e_1+e_3) = (1,-3)$ and changing coordinates in $\bZ^2$ by $(1,0)\mapsto (1,-3),\ (0,1)\mapsto (0,-1)$ we obtain the standard fan for Hirzebruch surface $\cH_{6}$.

The matrix of the $T$-action on $S_0$ is given by the product of the matrix $J$ of embedding of $T$ into the big torus of $Z_0$ with the transpose of the matrix $Q$. Here, inspecting the construction of the generators of $\cR$, we obtain:
\begin{equation*}
J = \begin{pmatrix}
3 & 1 & 2 & 3\\
1 & 3 & 2 & 3
\end{pmatrix}.
\end{equation*}
\end{proof}
\begin{proof}[Proof of (d)]
Denote $x:= w_{11},\,y:= w_{12},\,z:= w_{13},\,w:= w_{3}$. Rescaling coordinates we may assume that $Z_{P}$ is embedded in $\bC^{5}$ as the hypersurface $x^3 - yz^2 +w^2t = 0$. Then the unstable locus is described by equations $xw = yw = zw = 0$, see Lemma~\ref{lemma:stability}. This gives an open cover of the set of semistable points, given by the union of three open sets $\{xw\neq 0\}\cup \{yw\neq 0\}\cup \{zw\neq 0\}$. Gluing the quotients of these open sets coincides with the standard construction of $\bP^2$ (with coordinates $x,y,z$) by gluing three affine planes. Finally, note that $T$ acts on $w_{11},w_{12},w_{13}$ with weights $(2,2), (0,4),(3,1)$ respectively.
\end{proof}

We can also describe the non-normal locus of components of the central fibre.

\begin{theorem}\label{thm:non-normal-locus} If $\nu_i:\wt{S}_i\to S_i$ is the normalization of the component $S_i$ of the central fibre $i=0,1,2$ and $\cN_i\subset S_i$ is the locus of the non-normal points of $S_i$, then
\begin{enumerate}[label=(\alph*)]
\item $\nu_0^{-1}(\cN_0)$ is the sum of the closures of the orbits corresponding to $(-1,-3)$ and $(1,-3)$ i.e. it is the sum of invariant fibres of $\cH_6$.
\item $\nu_1^{-1}(\cN_1)$ is the sum of the closures of the orbits corresponding to $(-1,-2)$ and $(1,-1)$.
\item $\nu_2^{-1}(\cN_2)$ is the sum of the closures of the orbits corresponding to $(-1,-2)$ and $(1,-1)$.
\end{enumerate}
\end{theorem}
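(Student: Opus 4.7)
The plan is to analyze each $S_i$ chart by chart along its normalization, mirroring the construction in part (a) of Theorem~\ref{thm:central-fibre}. Since $\nu_i$ is $T$-equivariant, the non-normal locus $\cN_i$ is $T$-invariant, and consequently $\nu_i^{-1}(\cN_i)$ is a union of closures of $T$-orbits of $\wt{S}_i$. The open orbit of $\wt{S}_i$ maps isomorphically onto its image, so it suffices to identify which one-dimensional (divisorial) orbits are contained in $\nu_i^{-1}(\cN_i)$.

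To carry this out, I would cover $\wt{S}_i$ by its affine charts $U_\sigma = \Spec \bC[\sigma^{\vee} \cap M]$ indexed by the maximal cones $\sigma$ of the fan. On each chart, $\nu_i(U_\sigma)$ is the affine toric variety associated with the subsemigroup $S_\sigma \subset \sigma^{\vee} \cap M$ generated by the images, under the quotient map $Q$ from the proof of (a) (or its analogues for $S_1$ and $S_2$), of those characters corresponding to the coordinates $w_{ij}$ which are regular on $U_\sigma$. The conductor of the inclusion $\bC[S_\sigma] \hookrightarrow \bC[\sigma^{\vee} \cap M]$ is a monomial ideal; it vanishes precisely when $S_\sigma$ equals its saturation, and in general it cuts out $\nu_i^{-1}(\cN_i) \cap U_\sigma$ as a union of closures of boundary orbits.

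For $\wt{S}_0 = \cH_6$, the fan has four maximal cones, and applying $Q$ to the generators $v_{11},\dots,v_{23}$ and comparing with the saturated semigroup one finds that the charts containing the fibres over $(-1,-3)$ and $(1,-3)$ have non-saturated $S_\sigma$, while the other two do not; this yields~(a). For $S_1$ and $S_2$, whose fans each have three maximal cones, an analogous analysis using the embedding matrices produced in the proofs of (b) and (c) singles out the rays $(-1,-2)$ and $(1,-1)$ as the only ones with non-saturated semigroups. The main obstacle is bookkeeping rather than conceptual: one has to correctly tabulate, for each maximal cone, which monomial generators actually appear and then verify saturation of the subsemigroup they generate. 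Given the explicit lattice data established in Theorem~\ref{thm:central-fibre}, this can be done directly, or alternatively confirmed by a short computation in a computer algebra system such as Singular.
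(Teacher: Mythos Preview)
Your approach is correct in spirit and rests on the same idea as the paper's: detect non-normality by comparing a monomial subsemigroup with its saturation. The paper, however, does not work chart by chart on $\wt S_i$. It instead proves a single affine criterion (Lemma~\ref{lemma:non-normal-points-toric-variety}): for $U=\Spec\bC[S]$ with $\sigma^\vee=\operatorname{cone}(S)$ and a face $\tau\prec\sigma$, the orbit $O(\tau)\subset U$ consists of normal points if and only if $M\cap\langle\tau\rangle+S=M\cap\langle\tau\rangle+(M\cap\sigma^\vee)$. This is applied once to each affine $Z_i$ upstairs, producing the faces $\omega_k$ whose orbits are non-normal; one then intersects with the $\bT$-stable locus and pushes down via $Q$, using that $Z_i^s\to S_i$ is a $\bT$-torsor (Lemma~\ref{lemma:stability}) so that non-normal points of $S_i$ are exactly the images of non-normal stable points of $Z_i$. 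This avoids having to enumerate the charts of $\wt S_i$ and recompute a semigroup on each.

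One point in your write-up needs tightening. You describe $S_\sigma$ as ``generated by the images under $Q$ of those characters corresponding to the coordinates $w_{ij}$ which are regular on $U_\sigma$'', but the $w_{ij}$ are not $\bT$-invariant, so they are not functions on any chart of $S_i$ or $\wt S_i$, and applying $Q$ to their individual weights does not produce the semigroup you want. What you actually need on each chart is the semigroup of $\bT$-invariant monomials in the corresponding localization of $\bC[Z_i]$; equivalently, the intersection of the semigroup of $Z_i$ (shifted by the localization) with the kernel of the $\bT$-degree map, then identified with a subsemigroup of the quotient lattice. Once phrased this way your chart-by-chart check goes through, but note that this is precisely Lemma~\ref{lemma:non-normal-points-toric-variety} applied to $Z_i$ and then passed to the quotient, so the paper's route is the same computation organized more economically.
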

\begin{proof}
We use Lemma~\ref{lemma:non-normal-points-toric-variety} and the description of $\bT$-stable orbits of $Z_i$ analogous to the one in the proof of Theorem~\ref{thm:central-fibre} for $Z_0$. Altogether, $\bT$-stable orbits of $Z_i$ which consist of normal points correspond to the cones which do not contain the cones $\tau_j$ and are not contained in $\omega_k$, where $\tau_j$'s are defined analogously as in the proof of~\ref{thm:central-fibre}(a) and $\omega_k$'s are:
\begin{itemize}
\item For $i = 0$: $\omega_1 = cone(e_4,e_2+e_3)$ and $\omega_2 = cone(e_1,e_4)$.
\item For $i = 1,2$: $\omega:= \omega_1 = cone(e_2+e_3,e_4)$.
\end{itemize}
In each case one easily finds all such cones and their images via the map $Q$ (again, notation after proof of~\ref{thm:central-fibre}) turn out to be the cones parametrizing orbits in the statement. We conclude since the non-normal points of $S_i = Z_i/\!/\bT$ are precisely the images of the non-normal points of $Z_i$ which are $\bT$-stable. Here we use the fact that all semistable points of $Z_i$ are stable and the isotropy groups of the $\bT$-action are trivial by Lemma~\ref{lemma:stability}, so that the quotient $Z_i^s\to Z_i/\!/\bT$ is a torsor. 
\end{proof}
\begin{lemma}\label{lemma:non-normal-points-toric-variety}
Let $S$ be a subsemigroup of the lattice $M$ and let $U = \Spec \bC[S]$ be the corresponding (not necessarily normal) affine toric variety. Let $\sigma = cone(S)$. Let $\wt{U} = \Spec \bC[M\cap \sigma]$ be the toric normalization of $U$. Then for each subcone $\tau\prec \sigma$ the orbit $O(\tau)\subset U$ consists of the normal points of $U$ if and only if we have the equality of semigroups $M\cap \langle\tau\rangle+ S = M\cap \langle\tau \rangle + M\cap \sigma$, where $\langle \tau\rangle$ is the linear span of $\tau$.
\end{lemma}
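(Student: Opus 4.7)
The plan is to identify the non-normal locus of $U$ with the zero locus of the conductor ideal and then exploit torus invariance together with the $M$-grading to rephrase the problem combinatorially. Let $R = \bC[S]$ and $R' = \bC[M\cap\sigma]$, so the normalization morphism $\wt U \to U$ is induced by $R \hookrightarrow R'$, and the conductor $\mathfrak{c} := \{r \in R : r R' \subset R\}$ cuts out the non-normal locus. Because the inclusion $R \subset R'$ is $M$-graded, $\mathfrak{c}$ is a monomial ideal, generated by $\chi^m$ for $m$ in the set $C := \{m \in S : m + M\cap\sigma \subset S\}$; in particular $V(\mathfrak{c})$ is torus-invariant and hence a union of orbit closures. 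From this I would deduce
\[
O(\tau) \subset U_{\text{normal}} \iff O(\tau) \not\subset V(\mathfrak{c}) \iff \exists\, s_0 \in S\cap\tau \text{ with } s_0 + (M\cap\sigma) \subset S,
\]
using that the monomial $\chi^m$ vanishes on $O(\tau)$ exactly when $m \notin \langle\tau\rangle$.

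It then remains to show that the existence of such an $s_0$ is equivalent to the stated semigroup equality. The direction $(\Rightarrow)$ is immediate: for every $n \in M\cap\sigma$ one has $n = (s_0 + n) - s_0 \in S + M\cap\langle\tau\rangle$. For $(\Leftarrow)$, I would use finite generation of $R'$ as an $R$-module to fix $m_1, \ldots, m_\ell \in M\cap\sigma$ with $M\cap\sigma = \bigcup_i (m_i + S)$. The assumption gives decompositions $m_i = t_i + u_i$ with $t_i \in M\cap\langle\tau\rangle$ and $u_i \in S$; writing each $t_i = a_i - b_i$ with $a_i, b_i \in S\cap\tau$, the element $s_0 := \sum_i b_i \in S\cap\tau$ satisfies $s_0 + t_i = a_i + \sum_{j\ne i} b_j \in S\cap\tau$, so $s_0 + m_i \in S$ for each $i$, and hence $s_0 + (M\cap\sigma) \subset S$.

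The main obstacle is the auxiliary identity $\bZ\cdot(S\cap\tau) = M\cap\langle\tau\rangle$ used to produce the $a_i, b_i$ above. This is a general statement about finitely generated subsemigroups of a lattice and holds under the standing assumption $\langle S\rangle_\bZ = M$ (implicit in calling $\bC[M\cap\sigma]$ the normalization of $\bC[S]$). Its proof would use a primitive linear functional $\ell$ cutting out $\tau$ as $\sigma \cap \ker\ell$, together with a careful analysis of $\bZ$-linear expressions of an element of $M \cap \langle\tau\rangle$ as a combination of generators of $S$, in order to eliminate contributions from generators lying outside $\tau$.
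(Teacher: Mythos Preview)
Your route through the conductor ideal is genuinely different from the paper's argument, which is a short localization argument: the paper asserts that $U(\tau):=\Spec\bC[M\cap\langle\tau\rangle+S]$ is the open subvariety of $U$ obtained by keeping exactly the orbits whose closure contains $O(\tau)$, that $\wt U(\tau):=\Spec\bC[M\cap\langle\tau\rangle+M\cap\sigma]$ is its normalization, and then concludes that the semigroup equality holds iff $U(\tau)$ is normal, iff its unique closed orbit $O(\tau)$ consists of normal points. Your approach has the merit of making the hidden combinatorial input explicit: the step where you need $\bZ\cdot(S\cap\tau)=M\cap\langle\tau\rangle$.

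Unfortunately that ``main obstacle'' is not just an obstacle but an actual failure point. Take $M=\bZ^2$, $S$ the semigroup generated by $(2,0),(1,1),(0,1)$ (so $U$ is the Whitney umbrella $y^2=xz^2$), and $\tau=\bR_{\ge 0}\times\{0\}$. Then $S\cap\tau=2\bN\times\{0\}$, so $\bZ\cdot(S\cap\tau)=2\bZ\times\{0\}\neq M\cap\langle\tau\rangle$; your proposed argument for this identity therefore cannot work. Worse, in this example both sides of the lemma's semigroup equality equal $\bZ\times\bN$, yet $O(\tau)$ is the handle of the umbrella and lies in the non-normal locus, so the implication you are trying to prove in $(\Leftarrow)$ is simply false as stated. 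The paper's proof has the very same gap in disguise: the open subset of $U$ consisting of orbits whose closure contains $O(\tau)$ is $\Spec\bC[S+\bZ\cdot(S\cap\tau)]$, not $\Spec\bC[S+M\cap\langle\tau\rangle]$, and in the umbrella example these differ (the former is not normal). The clean fix is to replace $M\cap\langle\tau\rangle$ by $\bZ\cdot(S\cap\tau)$ throughout the statement; with that change your conductor argument goes through without the problematic auxiliary identity, and so does the paper's localization argument.
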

\begin{proof}
The toric variety $U(\tau) = \Spec \bC[M\cap \langle\tau\rangle+S]$ is the open subvariety of $\Spec \bC[S]$ obtained by removing these torus orbits which does not contain $O(\tau)$ in their closure. Similarly $\wt{U}(\tau) = \Spec \bC[M\cap \langle\tau \rangle + M\cap \sigma]$ is the open subvariety of $\Spec \bC[M\cap \sigma]$ obtained by removing all the torus orbits whose closure does not contain the orbit corresponding to $\tau$. Moreover $\wt{U}(\tau)\to U(\tau)$ is the normalization. Hence the equality $M\cap \langle\tau\rangle+ S = M\cap \langle\tau \rangle + M\cap \sigma$ holds if and only if $U(\tau)$ is normal, which is precisely the case when $O(\tau)$ consists of the normal points, since it is contained in the closure of each orbit of $U(\tau)$ and the non-normal locus of a variety is closed.
\end{proof}

By the analysis of intersections of stable loci of $Z_i$ and $Z_P$ we can systematically describe the intersections of the components of the central fibre in terms of the identifications from Theorem~\ref{thm:central-fibre}.

\begin{theorem}\label{thm:incidence}
\leavevmode
\begin{enumerate}[label=(\alph*)]
\item $S_0\cap S_1$ is the curve corresponding to $(-1,-3)$ on the normalization of $S_0$ and to $(1,-1)$ on a normalization of $S_1$.
\item $S_0\cap S_2$ is the curve corresponding to $(1,-3)$ on the normalization of $S_0$ and to $(1,-1)$ on a normalization of $S_2$.
\item $S_0\cap P$ is the curve corresponding to $(0,-1)$ on the normalization of $S_0$ and the cuspidal cubic curve $x^3-yz^2 = 0$ on $P$ with homogeneous coordinates $x,y,z$.
\item $S_1\cap P$ is the curve corresponding to $(1,0)$ on the normalization of $S_1$ and to the line $y = 0$ on $P$ with homogeneous coordinates $x,y,z$ (note that this is the flex tangent of the cuspidal cubic curve $S_0\cap P$).
\item $S_2\cap P$ is the point corresponding to the cone spanned by rays $(0,1),(1,-1)$ on the normalization of $S_2$ and to the point $x=z=0$ on $P$ with homogeneous coordinates $x,y,z$ (note that this is the cusp of the cubic curve $S_0\cap P$).
\end{enumerate}
\end{theorem}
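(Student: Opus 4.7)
The plan is to compute each intersection $S_i \cap S_j$ by taking the ideal-theoretic intersection of $Z_i$ and $Z_j$ inside $\bC^{20}$ using the generators from Proposition~\ref{prop:compontents-over-fibre}, restricting to the stable locus via Lemma~\ref{lemma:stability}, and then identifying the resulting $\bT$-quotient as a subvariety of the toric surface or $\bP^2$ supplied by Theorem~\ref{thm:central-fibre}. Since all semistable points are stable with trivial isotropy, the quotient is a principal $\bT$-bundle on the stable locus, so reduced vanishing loci upstairs correspond bijectively to reduced vanishing loci downstairs.

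For the toric--toric cases (a) and (b), the combined ideal imposes several extra vanishings $w_{ij}=0$ on top of those already in $Z_i$ --- in (a), for instance, the $Z_0$ relation $w_{11}^3 = 27 w_{12} w_{13}^2$ combined with $w_{12}=0$ from $Z_1$ forces $w_{11}=0$ on the reduced intersection --- and the remaining non-vanishing generators then pick out a single torus orbit of $Z_i$ (and symmetrically of $Z_j$). I would identify the corresponding face $\tau\prec\sigma$ by the criterion that $v_{ij}\in \tau^{\perp}$ precisely for the non-vanishing $w_{ij}$, and then apply the map $Q$ from the proof of Theorem~\ref{thm:central-fibre}(a) to read off the ray of the fan of the normalization $\wt{S}_i$; in case (a) one finds $\tau = cone(3e_2+2e_3)$, which $Q$ sends to $(-1,-3)$. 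Repeating this on the $Z_j$ side produces the matching ray on $\wt{S}_j$.

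For the cases (c)--(e) involving $P$, the non-binomial relation $w_{11}^3 - 27 w_{12} w_{13}^2 = i\sqrt{3}\, w_3^2\, t$ of $Z_P$ is the key: intersecting with $Z_0$ forces $t=0$, degenerating this to the binomial $w_{11}^3 = 27 w_{12} w_{13}^2$, which in the homogeneous coordinates $x=w_{11},\, y=w_{12},\, z=w_{13}$ of $P\cong\bP^2$ becomes, after the rescaling used in the proof of Theorem~\ref{thm:central-fibre}(d), the cuspidal cubic $x^3 = yz^2$. Intersecting with $Z_1$ imposes $w_{12}=0$, giving the line $y=0$, and intersecting with $Z_2$ forces $w_{11}=w_{13}=0$, giving the single point $[0:1:0]$. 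On the $S_i$ side these subvarieties are located by the same face-and-$Q$ procedure as in the toric--toric cases, and the parenthetical assertions in (d) and (e) --- that $\{y=0\}$ is the flex tangent of the cuspidal cubic and that $[0:1:0]$ is its cusp --- are elementary checks in $\bP^2$.

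The main obstacle is bookkeeping: each identification of $S_i$ with a toric variety, and of $P$ with $\bP^2$, involves a rescaling from binomial to toric ideal together with the passage through $Q$ from the 4-dimensional fan of $Z_i$ to the 2-dimensional fan of $\wt{S}_i$, and these translations must be carried out consistently on both sides of each of the five intersections. Once this bookkeeping is in place, each case reduces to the short ideal computation outlined above.
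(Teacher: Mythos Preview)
Your proposal is correct and follows essentially the same approach as the paper: compute the intersection $Z_i\cap Z_j$ upstairs, read off the extra vanishings it imposes on each side, then translate via the face-and-$Q$ procedure on the toric side and via the explicit homogeneous coordinates on the $P$ side. The paper only writes out case~(c) in detail, identifying $\tau=cone(e_2+e_3)$ on $Z_0$ and $t=0$ on $Z_P$, and leaves the remaining cases to the reader; your outline fills those in along the same lines, and your sample computation $\tau=cone(3e_2+2e_3)\mapsto(-1,-3)$ for case~(a) is exactly the kind of check the paper intends.
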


We give an argument in the case (c) using the notation from the proof of Theorem~\ref{thm:central-fibre}.
\begin{proof}[Proof of (c)]
On $Z_0$ the intersection $Z_0\cap Z_P$ is cut out by equations $w_{21}=w_{22}=w_{23} = 0$. Hence it contains as a dense subset the orbit of the toric variety $Z_0$ which corresponds to the one-dimensional cone $\tau = cone(e_2+e_3)$, since $\sigma^{\vee} \setminus \tau^{\perp} \ni v_{21},v_{22},v_{23}$.
Then we have $Q(e_2+e_3) = (0,-1)$.

On $Z_P$ the intersection $Z_0\cap Z_P$ is cut out by the equation $t=0$. By the construction of the isomorphism $P\cong \bP^2$ this equation yields the curve $x^3-yz^2=0$ on $\bP^2$.
\end{proof}

The next lemma shows that all nef line bundles on $X$ are  globally generated, which will be important in the next sections. Note that it is already known that all nef line bundles on $X$ are globally generated by work of~\cite{AW}.

\begin{lemma}\label{lemma:ggbundles}
$L_1+L_2$ and $L_1$ are globally generated line bundles on $X$.
\end{lemma}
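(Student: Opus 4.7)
The plan is to reduce global generation of $L\in\{L_1, L_1+L_2\}$ on $X$ to global generation of $L|_{F_y}$ on each fibre $F_y = \pi^{-1}(y)$, and then to handle the central fibre using the torus action. First I would note that $L$ is nef by proposition~\ref{prop-mov-cone} and that $K_X$ is trivial because $X$ is a symplectic resolution. Since $\pi$ is a projective birational morphism, $L$ is $\pi$-nef and $\pi$-big, so the relative Kawamata-Viehweg vanishing theorem yields $R^i\pi_*L = 0$ for $i>0$. Combined with cohomology and base change, this implies that $\pi_*L$ is locally free and that the natural maps $(\pi_*L)\otimes k(y)\to H^0(F_y, L|_{F_y})$ are isomorphisms for every closed point $y$; since $\bC^4/G$ is affine, $\pi_*L$ is globally generated, and it is enough to show that $L|_{F_y}$ is globally generated for every $y\in \bC^4/G$.

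For $y\notin \Sigma$ the fibre is a point; for $y\in \Sigma\setminus\{[0]\}$ the fibre is a nodal pair of smooth rational curves in the classes $C_1, C_2$, and global generation of both line bundles is a direct check from the intersection numbers $L_1\cdot C_1 = 1$, $L_1\cdot C_2 = 0$ and $(L_1+L_2)\cdot C_i = 1$: one writes down sections that are constant on one ruling and of degree one on the other, compatible at the node. The remaining and main case is the central fibre $F_0 = S_0\cup S_1\cup S_2\cup P$. Here I would use the $T$-action on $X$ from corollary~\ref{cor:T-action-on-resolution}: the line bundle $L$ carries a natural $T$-linearization coming from the inclusion $\cR(X)\subset \bC[x_1,\ldots,x_4]^{[G,G]}[t_1^{\pm 1}, t_2^{\pm 1}]$, so the base locus of $L|_{F_0}$ is a $T$-invariant closed subscheme of the projective variety $F_0$. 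By the Borel fixed point theorem it suffices to exhibit, for each $T$-fixed point $p\in F_0$, a section of $L|_{F_0}$ not vanishing at $p$.

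Theorems~\ref{thm:central-fibre} and~\ref{thm:incidence} pin down the finitely many $T$-fixed points of $F_0$ together with the local $T$-weights on each component. For each fixed point I would produce the required section as a monomial in the twenty Cox-ring generators of theorem~\ref{thm:Cox-ring} whose Picard bi-degree equals $(1,0)$ or $(1,1)$ and whose $T$-weight realises the extremal direction at $p$. The hard part will be the bookkeeping: the components $S_0, S_1, S_2$ are non-normal and meet the other components of $F_0$ along curves or at a point, so non-vanishing at $p$ must be verified on the actual component containing $p$ rather than on its normalization. Nevertheless the fixed-point count is small and the Picard- and $T$-weights of all twenty generators are tabulated in theorem~\ref{thm:Cox-ring}, so the final step is a finite combinatorial check.
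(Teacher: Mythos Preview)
Your core idea --- reduce to $T$-fixed points in the central fibre via the torus action, then exhibit at each fixed point a non-vanishing Cox-ring monomial of the correct Picard degree --- is exactly what the paper does. However, the detour through relative Kawamata--Viehweg and cohomology-and-base-change in your first paragraph has a gap: the standard base-change theorem requires the sheaf to be flat over the base, and $\pi$ is not flat (its fibre dimension jumps over $\Sigma$ and again over $[0]$). So the isomorphism $(\pi_*L)\otimes k(y)\cong H^0(F_y, L|_{F_y})$ is not justified as stated, and with it the reduction ``global generation on $X$ $\Leftrightarrow$ global generation of each $L|_{F_y}$'' is unsupported.

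Fortunately this detour is also unnecessary. The sections you produce in your final step are \emph{global} sections of $L$ on $X$, so nothing needs to be lifted from a fibre. Argue directly: the base locus $B\subset X$ of $|L|$ is closed and $T$-invariant, hence $\pi(B)$ is closed and $T$-invariant in the affine variety $\bC^4/G$; since the diagonal $\bC^*\subset T$ contracts $\bC^4/G$ to $[0]$, any nonempty such subset contains $[0]$, so $B$ meets $F_0$; then Borel on the proper scheme $F_0$ yields a $T$-fixed point in $B$, contradicting your last step. This is precisely the paper's argument, phrased there as: the common zero locus of the Cox-ring generators of Picard degree $(1,0)$ (resp.\ $(1,1)$), intersected with $p^{-1}([0])\subset\Spec\cR$, lies entirely in the $\bT$-unstable locus. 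That formulation on $\Spec\cR$ also sidesteps the non-normal-component bookkeeping you anticipate, since it becomes a direct ideal check against the equations of proposition~\ref{prop:compontents-over-fibre} and lemma~\ref{lemma:stability}.
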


\begin{proof}
Since $L_1+L_2$ and $L_1$ are invariant with respect to the $T$-action and the base point locus of a linear system is closed for both linear systems $|L_1+L_2|$ and $|L_1|$ it either has to be empty or it has a nontrivial intersection with the central fibre. The assertion follows by inspecting the weights of the generators of the Cox ring with respect to the Picard torus action and the equations of components of the fibre $p^{-1}([0])$ where $p\colon\Spec \cR \to \bC^4/G$ is as in Proposition~\ref{prop:compontents-over-fibre}. It turns out that the intersections of the zero sets of elements of each of these two linear systems with $p^{-1}([0])$ are contained in the unstable locus.
\end{proof}

\subsection{Smoothness of the GIT quotient}
\label{section:smoothness-of-GIT-quotient}

Let $\cR$ be the subring of the Cox ring of the crepant resolution generated by the elements from the statement of Theorem~\ref{thm:Cox-ring}. In this section we show that the GIT quotient $\Spec \cR/\!/_{L}\bT$ with respect to the linearization of the trivial line bundle by the character $L=2L_1+L_2$ of the Picard torus $\bT$ is smooth. In consequence we see that $\Spec \cR/\!/_L\bT\to \bC^4$ is a crepant resolution. This makes the results on the geometry of crepant resolutions of $\bC^4/G$ in the previous section unconditional and helps to conclude that $\cR$ is the whole Cox ring in the final Section~\ref{section:Cox-LS}.

We consider $\Spec \cR$ as a closed subvariety of $\bC^{20}$ via the embedding given by generators from statement of Theorem~\ref{thm:Cox-ring}.

\begin{theorem}\label{thm:smoothness-of-GIT-quotient}
The stable locus of $\Spec \cR$ with respect to the linearization of the trivial line bundle by a character $(a,b), \ a>b>0$ is covered by seven $T\times \bT$-invariant open subsets $U_{1},\ldots, U_{7}$ such that $U_{i}/\bT\cong \bC^4$. More precisely if $(a,b) = (2,1)$ then:
\begin{enumerate}
\item $U_{1} = \{w_{12}s\neq 0\}$ and $(\cR_{w_{12}s})^{\bT} = \bC\left[w_{02},w_{05},\frac{w_{23}}{w_{12}s},\frac{w_{24}}{w_{12}s}\right]$,
\item $U_{2} = \{w_{12}w_{23} \neq 0\}$ and $(\cR_{w_{12}w_{23}})^{\bT} = \bC\left[\frac{w_{12}^2s}{w_{23}},\frac{w_{21}}{w_{23}},\frac{w_{24}}{w_{23}},\frac{w_3}{w_{12}w_{23}}\right]$,
\item $U_{3} = \{w_{12}w_3\neq 0\}$ and $(\cR_{w_{12}w_3})^{\bT} = \bC\left[\frac{w_{11}}{w_{12}},\frac{w_{13}}{w_{12}},\frac{w_{12}w_{23}}{w_{3}},\frac{w_{12}w_{24}}{w_{3}}\right]$,
\item $U_{4} = \{w_{11}w_3\neq 0\}$ and $(\cR_{w_{11}w_3})^{\bT} = \bC\left[\frac{w_{12}}{w_{11}},\frac{w_{13}}{w_{11}},\frac{w_{11}w_{22}}{w_{3}},\frac{w_{11}w_{23}}{w_{3}}\right]$,
\item $U_{5} = \{w_{13}w_3\neq 0\}$ and $(\cR_{w_{13}w_3})^{\bT} = \bC\left[\frac{w_{11}}{w_{13}},\frac{w_{12}}{w_{13}},\frac{w_{12}w_{21}}{w_{3}},\frac{w_{12}w_{22}}{w_{3}}\right]$,
\item $U_{6} = \{w_{13}w_{22} \neq 0\}$ and $(\cR_{w_{13}w_{22}})^{\bT} = \bC\left[\frac{w_{22}^2t}{w_{13}},\frac{w_{11}}{w_{13}},\frac{w_{14}}{w_{13}},\frac{w_3}{w_{13}w_{22}}\right]$,
\item $U_{7} = \{w_{22}t\neq 0\}$ and $(\cR_{w_{22}t})^{\bT} = \bC\left[w_{03},w_{04},\frac{w_{13}}{w_{22}^2t},\frac{w_{14}}{w_{12}s}\right]$.
\end{enumerate}
In particular the GIT quotient $\Spec \cR/\!/_{(a,b)}\bT$ with respect to the linearization of the trivial line bundle by a character $(a,b), \ a > b > 0$ is smooth. 
\end{theorem}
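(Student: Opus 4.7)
The plan is to prove the theorem in three stages: first identify the seven open sets as covering the stable locus, then for each chart describe the $\bT$-invariant subring of the localization explicitly, and finally deduce smoothness. The first stage is essentially a bookkeeping consequence of the stability lemma (Lemma~\ref{lemma:stability}): the unstable locus is cut out simultaneously by the seven equations $w_{12}s=w_{12}w_{23}=w_{11}w_3=w_{12}w_3=w_{13}w_3=w_{13}w_{22}=w_{22}t=0$, so the stable locus is exactly the union of the non-vanishing loci of these functions, which is $U_1\cup\cdots\cup U_7$. Each $U_i$ is $T\times\bT$-invariant because the localizing function $f_i$ is a product of elements that are bi-homogeneous with respect to both the Picard and the $T$-grading recorded in Theorem~\ref{thm:Cox-ring}.

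For the second stage, I would verify chart by chart that the four listed fractions indeed lie in $(\cR_{f_i})^{\bT}$; this is immediate from the Picard degrees in the degree matrix of Theorem~\ref{thm:Cox-ring} (for example in $U_1$ one checks that $w_{02}$, $w_{05}$, $w_{23}/(w_{12}s)$ and $w_{24}/(w_{12}s)$ all have Picard degree $(0,0)$). To show that these four fractions generate the full invariant subring, I would exploit the fact that $\bT$ acts with trivial isotropy on the stable locus by Lemma~\ref{lemma:stability}, so each $U_i\to U_i/\bT$ is a $\bT$-torsor and in particular $\dim U_i/\bT=\dim\Spec\cR-2=4$. Algebraic independence of the four fractions can be checked by writing down their bi-homogeneous $T$-weights and observing that these are linearly independent in $\bZ^2\otimes\bQ$, or by exhibiting a point where the differentials are linearly independent. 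Given independence, the induced map $\Spec(\cR_{f_i})^{\bT}\to\bC^4$ is a dominant morphism between normal affine fourfolds, and the reverse inclusion of rings is forced by expressing an arbitrary $\bT$-invariant monomial in the $20$ generators of $\cR$ (with $f_i$ inverted) as a polynomial in the listed four fractions modulo the ideal of relations from Theorem~\ref{thm:Cox-ring}.

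Having established step two, smoothness is automatic: each $U_i/\!/\bT$ is the spectrum of a polynomial ring in four variables, hence isomorphic to $\bC^4$, and the seven $U_i/\!/\bT$ form an affine open cover of the GIT quotient $\Spec\cR/\!/\bT$. The main obstacle is the middle step, specifically the claim that the four explicit fractions generate the full $\bT$-invariant subring: while the degree zero and dimension conditions are easy to confirm, ruling out additional generators requires reducing arbitrary invariant monomials modulo the (large) ideal of relations among the twenty generators of $\cR$, and this is where one expects to resort to a computer algebra calculation (in the spirit of the Singular/Macaulay2 computations used elsewhere in the paper). Once the invariant rings are in hand, the remaining assertions about $U_i/\!/\bT\cong\bC^4$ and about the smoothness of the overall GIT quotient are formal consequences.
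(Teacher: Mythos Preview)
Your outline follows essentially the same strategy as the paper: use Lemma~\ref{lemma:stability} to see that the seven non-vanishing loci cover the stable set, then verify for each chart that the four listed fractions generate the $\bT$-invariant localization (with computer algebra doing the heavy lifting), and read off smoothness. Two small corrections are worth making. First, the proposed check of algebraic independence via ``$T$-weights linearly independent in $\bZ^2\otimes\bQ$'' cannot work: four vectors in a rank-two lattice are never linearly independent. The paper instead argues independence \emph{after} establishing generation, simply by noting that the quotient is irreducible of dimension four, so four generators of its coordinate ring must be algebraically independent. Second, the paper is more explicit about the intermediate step you allude to: rather than directly reducing arbitrary invariant monomials, it first computes (with \texttt{4ti2}) a finite generating set for the invariants of the ambient $\bC^{20}$ localized at $f_i$, obtaining a concrete list of about twenty monomial invariants, and only then uses a Gr\"obner basis of the relation ideal (in \texttt{Singular}) to express each of those in terms of the four claimed generators. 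This two-step reduction is what makes the computer verification tractable.
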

\begin{proof}
Lemma~\ref{lemma:stability} implies that $\{U_i\}_{i=1,\ldots 7}$ form an open cover of the quotient. It remains to prove equalities from points (1)--(7). Note that then in each case the four generators of the ring on the right-hand side of the equality have to be algebraically independent as the GIT quotient $\Spec \cR/\!/_{(a,b)}\bT$ is irreducible and of dimension four.  

By symmetry it suffices to consider only $U_{i}$ for $i=1,2,3,4$. In each case we calculate the invariants of the localization of the coordinate ring of the ambient $\bC^{20}$, with the help of 4ti2~\cite{4ti2} obtaining in consequence:
\begin{enumerate}
\item $\Spec \cR_{w_{12}s}^{\bT} = \bC\left[w_{0i},\frac{w_{1j}}{w_{12}},\frac{w_{2j}}{w_{12}^2s},\frac{w_{3}}{w_{12}^3s},w_3st\right]_{i = 1,\ldots,7, \ j = 1,\ldots, 5}$
\item $\Spec \cR_{w_{12}w_{23}}^{\bT} = \bC\left[w_{0i},\frac{w_{1j}}{w_{12}},\frac{w_{2j}}{w_{23}},\frac{w_{12}^2s}{w_{23}},\frac{w_{23}^2t}{w_{12}},\frac{w_3}{w_{12}w_{23}}\right]_{i = 1,\ldots,7, \ j = 1,\ldots, 5}$
\item $\Spec \cR_{w_3w_{12}}^{\bT} = \bC\left[w_{0i},\frac{w_{1j}}{w_{12}},\frac{w_{12}w_{2j}}{w_{3}},\frac{w_{12}^3s}{w_{3}},\frac{w_{3}^2t}{w_{12}^3}\right]_{i = 1,\ldots,7, \ j = 1,\ldots, 5}$
\item $\Spec \cR_{w_3w_{11}}^{\bT} = \bC\left[w_{0i},\frac{w_{1j}}{w_{11}},\frac{w_{11}w_{2j}}{w_{3}},\frac{w_{11}^3s}{w_{3}},\frac{w_{3}^2t}{w_{11}^3}\right]_{i = 1,\ldots,7, \ j = 1,\ldots, 5}$
\end{enumerate}

Then, using the Gr\"obner basis of the ideal of relations between generators of $\cR$ with respect to an appropriate lexicographic order, we verify with Singular~\cite{Singular} in each case that each of the generators of these four rings can be expressed as a polynomial of the four generators from the assertion.
\end{proof}

By the inclusion of invariants $\cR^{\bT}\subset \cR$ (see Lemma~\ref{lemma:inclusion-of-invariants}) we have the induced projective map $\Spec \cR/\!/_{L}\bT\to \Spec \cR/\bT \cong \bC^4/G$.

\begin{corollary}\label{corollary-R-gives-resolution}
The map $\varphi\colon \Spec \cR/\!/_L\bT\to \Spec \cR/\bT \cong \bC^4/G$ is a crepant resolution.
\end{corollary}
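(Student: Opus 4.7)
The plan is to assemble the properties of $\pi$ step by step, with the main input being the smoothness already established. First, by Theorem~\ref{thm:smoothness-of-GIT-quotient}, $Y := \Spec \cR/\!/\bT$ is smooth, being covered by the seven affine charts $U_i \cong \bC^4$. The map $\pi\colon Y \to \bC^4/G$ is projective because any GIT quotient of an affine variety is projective over the spectrum of its invariants, and $\Spec \cR^{\bT} \cong \bC^4/G$ by Lemma~\ref{lemma:inclusion-of-invariants}. Thus $Y$ is a smooth projective variety over $\bC^4/G$, and it remains to check birationality and crepancy of $\pi$.

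For birationality, I would rely on Lemma~\ref{lemma:stability}: on the semistable locus of $\Spec \cR$ all points are stable with trivial $\bT$-isotropy, so the quotient $\Spec \cR^{\mathrm{ss}} \to Y$ is a $\bT$-torsor. A dimension count gives $\dim \Spec \cR = 4 + 2 = 6$ and hence $\dim Y = 4 = \dim \bC^4/G$. Over the smooth locus $\bC^4/G \setminus \Sigma$, where the preimage in $\bC^4$ consists of free $G$-orbits and the preimage in $\Spec \cR$ consists of free $\bT$-orbits over these, $\pi$ restricts to an isomorphism.

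For crepancy, the key input is that $G \subset \Sp_{4}(\bC)$, so the standard symplectic form $\omega_0$ on $\bC^4$ is $G$-invariant and $\omega_0^{\wedge 2}$ descends to a trivialization of $K_{\bC^4/G}$. By the classical extension theorem for holomorphic forms on varieties with symplectic singularities, $\pi^*\omega_0$ extends to a regular $2$-form $\tilde\omega$ on the smooth variety $Y$; then $\tilde\omega^{\wedge 2} \in H^0(Y, K_Y)$ is nonzero over $\pi^{-1}(\bC^4/G \setminus \Sigma)$. Crepancy is equivalent to $\tilde\omega^{\wedge 2}$ being nowhere zero on $Y$, which I would verify locally on each chart $U_i \cong \bC^4$ from Theorem~\ref{thm:smoothness-of-GIT-quotient}: expressing $\pi^*\omega_0^{\wedge 2}$ in the four explicit coordinate generators of $(\cR_f)^{\bT}$ listed there (for $f$ ranging over $w_{12}s$, $w_{12}w_{23}$, etc.), one checks it is a unit multiple of the standard volume form.

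The main obstacle is precisely the crepancy step. The local verification above is direct but computationally heavy. An attractive alternative is to combine the existence of a projective symplectic (hence crepant) resolution from~\cite{BellamySchedler} with the fact that $Y$ has Picard rank two (from the degree matrix in Theorem~\ref{thm:Cox-ring}) and matches the exceptional configuration computed in Theorem~\ref{thm:central-fibre}; this should force $Y$ to coincide with one of the two known symplectic resolutions of $\bC^4/G$ and so be crepant automatically.
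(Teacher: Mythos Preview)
Your proposal is correct in its overall architecture, and the smoothness, projectivity, and birationality steps match the paper's reasoning closely. The difference lies in the crepancy argument, where you take a substantially heavier route than the paper.

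The paper dispatches crepancy in one line: the exceptional locus of $\pi$ is exactly $\ovl{E}_1\cup\ovl{E}_2$ with $\ovl{E}_i=\{s=0\}$, $\{t=0\}$ (this is immediate from the construction of $\cR$ as a subring of $\bC[x_1,\dots,x_4]^{[G,G]}[t_1^{\pm1},t_2^{\pm1}]$), and by the symplectic McKay correspondence there are precisely two crepant exceptional divisors over $\bC^4/G$, which must appear on any resolution. Hence $\ovl{E}_1,\ovl{E}_2$ are those two crepant divisors and $\pi$ is crepant. No local computation, no extension of the symplectic form, no matching against the Bellamy--Schedler resolution is needed.

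Your primary plan---pulling back $\omega_0^{\wedge2}$ and checking it is a unit on each chart $U_i$---would work, but it is exactly the kind of seven-fold explicit verification that the McKay-correspondence bookkeeping replaces. Your alternative plan (matching Picard rank and exceptional configuration with a known symplectic resolution) is logically sound but circular in spirit for this paper, whose point is to construct the resolution \emph{from} $\cR$ without appealing to the explicit Lehn--Sorger model. The McKay-based argument buys you both brevity and independence from external constructions.
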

\begin{proof}
Denote $\ovl{E}_i= \{t_i = 0\}\subset \Spec \cR/\!/_L\bT, \ i=1,2$. These are two irreducible divisors on $\Spec \cR/\!/_L\bT$. By the construction of $\cR$ the map $\varphi$ is an isomorphism outside $\ovl{E}_i$. Hence $\varphi$ is a resolution and it has to be crepant since there are only two crepant divisors over $\bC^4/G$ by symplectic McKay correspondence and they have to be present on each resolution.
\end{proof}

\subsection{Compasses of fixed points}\label{section:compasses}
In this section we obtain a local description of the action of the two-dimensional torus $T$ on a symplectic resolution $X = \Spec \cR /\!/_L \bT$ of $\bC^4/G$ at fixed points of this action, where $L = 2L_1+L_2$. 

We will not use the precise description of the smooth open cover of $X$ from Theorem~\ref{thm:smoothness-of-GIT-quotient} as it was the local calculation that originally led us to this open cover (see Remark~\ref{remark-quotient-coordinates-heuristic} at the end of this section for a detailed explanation). 

Let $x$ be a fixed point of the action of $T$ on $X$. Torus $T$ acts then also on the cotangent space $T_x^* X$. This gives weight-space decomposition $T_x^* X = \bigoplus_{i=1}^{4} V_{\nu_i}$, where $\nu_i$ are in the character lattice of $T$, which we will identify with $\bZ^2$.

The following definition was first introduced in~\cite[Sect~2.3]{BWW}
\begin{definition}\label{def-compass}
The set of weights $\nu_1,\ldots, \nu_4$ is called the \emph{compass} of $x$ in $X$ with respect to the action of $T$.
\end{definition}

We will now work to find compasses of all fixed points of the action $T$ on $X$.
 
\begin{lemma}\label{lemma-polytopes}
The following diagram shows the weights of the action of $T$ on the space of sections of $H^{0}(X,L)$ for $L=2L_1+L_2$ which are nonzero after the restriction to some irreducible component of the central fibre. 

\begin{center}
\begin{tikzpicture}[scale=0.3]
 \foreach \Point in{(8,4), (5,7), (3,9), (9,3), (6,6), (5,5), (2,10), (1,11), (5,7), (4,8), (3,7), (10,2), (7,5), (6,4)}{
       \draw[blue] \Point circle[radius=8pt];
       \fill[blue] \Point circle[radius=8pt];}
       \foreach \Point in {(9,3), (14,2), (5,5), (10,2), (12,2), (15,1), (6,4), (17,1), (8,4), (20,0), (11,3)}{
       \draw[green] \Point circle[radius=6pt];
       \fill[green] \Point circle[radius=6pt];}
       \foreach \Point in {(2,10), (1,11), (1,13), (0,16), (3,7)}{
       \draw[red] \Point circle[radius=4pt];
       \fill[red] \Point circle[radius=4pt];}
       \foreach \Point in {(5,5), (3,7), (6,4)}{
       \draw[black] \Point circle[radius=2pt];
       \fill[black] \Point circle[radius=2pt];}
       
       \coordinate (A) at (0,16);
       \node at (A) [left = 1mm of A]{\small $(0,16)$};
       \coordinate (B) at (1,11);
       \node at (B) [left = 1mm of B]{\small $(1,11)$};
       \coordinate (C) at (3,7);
       \node at (C) [left = 1mm of C]{\small $(3,7)$};
       \coordinate (D) at (5,5);
       \node at (D) [left = 1mm of D]{\small $(5,5)$};
       \coordinate (E) at (6,4);
       \node at (E) [left = 1mm of E]{\small $(6,4)$};
       \coordinate (F) at (10,2);
       \node at (F) [below left = 1mm of F]{\small $(10,2)$};
       \coordinate (G) at (20,0);
       \node at (G) [below = 1mm of G]{\small $(20,0)$};
\end{tikzpicture}
\end{center}
The black dots correspond to the weights of sections of $L$ restricted to $P$, blue ones to $S_0$, green ones to $S_1$ and red ones to $S_2$ (note that lattice points marked by multiple colours correspond to weights occurring in restriction to more than one component).
\end{lemma}
\begin{remark}
Lattice points from Lemma~\ref{lemma-polytopes} are contained in the polyhedron from Remark~\ref{remark:weights-example}. Moreover, their convex hull form the minimal head of this polyhedron. 
\end{remark}
\begin{remark}\label{rmk:fixed-point-weights}
Considering the polytope which is the convex hull of weights marked by dots in Lemma~\ref{lemma-polytopes} we get the polytope of the line bundle $L$ pulled back to $S$ viewed as a toric variety. As $L$ is globally generated (see Lemma~\ref{lemma:ggbundles}) the vertices correspond to the fixed points of the action of $T$ on $S$, cf.~\cite[Lem~2.4(3)]{BWW}. 
In particular one obtains the weights of the action of $T$ on the tangent space to $S$ at fixed points.  
\end{remark}
\begin{theorem}\label{thm-compasses}
The fixed points of the $T$-action correspond to the vertices of the polytopes which are convex hulls of weights marked by the colour fixed in Lemma~\ref{lemma-polytopes}. The compasses of the points corresponding to the vertices of these polytopes are as in the table below:
{\small
\begin{center}
\begin{tabular}{| c | c c c c |}
\hline
Point & & Compass & & \\
\hline
$P_1 \leftrightarrow(0,16)$ & $\nu_{1,1} = (1,-3),$ & $\nu_{1,2} = (1,-5),$ & $\nu _{1,3} = (0,4),$ & $\nu_{1,4} = (0,6)$\\
\hline
$P_2\leftrightarrow(1,11)$ & $\nu_{2,1} = (-1,5),$ & $\nu_{2,2} = (2,-4),$ & $\nu_{2,3} = (1,-1),$ & $\nu_{2,4}=(0,2)$ \\ 
\hline
$P_3 \leftrightarrow(3,7)$ & $\nu_{3,1} = (-2,4),$ & $\nu_{3,2} = (-1,3),$ & $\nu_{3,3} = (2,-2),$ & $\nu_{3,4} = (3,-3)$\\
\hline 
$P_4 \leftrightarrow(5,5)$ & $\nu_{4,1} = (-2,2),$ & $\nu_{4,2} = (1,-1),$ & $\nu_{4,3} = (3,-1),$ & $\nu_{4,4} = (0,2)$\\
\hline 
$P_5 \leftrightarrow(6,4)$ & $\nu_{5,1} = (-1,1),$ & $\nu_{5,2} = (4,-2),$ & $\nu_{5,3} = (-3,3),$ & $\nu_{5,4} = (2,0)$\\
\hline 
$P_6 \leftrightarrow(10,2)$ & $\nu_{6,1} = (-4,2),$ & $\nu_{6,2} = (5,-1),$ & $\nu_{6,3} = (-1,1),$ & $\nu_{6,4} = (2,0)$\\
\hline 
$P_7 \leftrightarrow(20,0)$ & $\nu_{7,1} = (-3,1),$ & $\nu_{7,2} = (-5,1),$ & $\nu_{7,3} = (4,0),$ & $\nu_{7,4} = (6,0)$\\
\hline
\end{tabular}
\end{center}
}
\end{theorem}

The following picture illustrates the weights of the $T$-action calculated in the theorem. It is a directed graph. The points correspond to the sections of $H^{0}(X,L)$ for $L=2L_1+L_2$ which are nonzero after the restriction to the central fibre together with vectors, as in Lemma~\ref{lemma-polytopes}. The directed edges are the vectors from the compasses attached to the points which correspond to fixed points of $T$-action. In case when two vertices are connected by the two edges pointing in both ways we depict them by a single edge without any arrow.

\begin{center}
\begin{tikzpicture}[scale=0.3,>=latex]

\draw[->] (-0.1,16) -- (-0.1,20);
\draw[->] (0.1,16) -- (0.1,22);
\draw[->] (0,16) -- (1,13);
\draw[-] (0,16) -- (1,11);

\draw[->] (1,11) -- (1,13);
\draw[->] (1,11) -- (2,10);
\draw[-] (1,11) -- (3,7);

\draw[->] (3,7) -- (2,10);
\draw[-] (3.05,7.05) -- (5.05,5.05);
\draw[-] (2.85,6.85) -- (5.85,3.85);

\draw[->] (5,5) -- (8,4);
\draw[->] (5,5) -- (5,7);
\draw[-] (5.05,5.05) -- (6.05,4.05);

\draw[-] (6,4) -- (10,2);
\draw[->] (6,4) -- (8,4);

\draw[->] (10,2) -- (9,3);
\draw[->] (10,2) -- (12,2);
\draw[->] (10,2) -- (15,1);

\draw[->] (20,0) -- (15,1);
\draw[->] (20,0) -- (17,1);
\draw[->] (20,0.1) -- (24,0.1);
\draw[->] (20,-0.1) -- (26,-0.1);

 \foreach \Point in{(8,4), (5,7), (3,9), (9,3), (6,6), (5,5), (2,10), (1,11), (5,7), (4,8), (3,7), (10,2), (7,5), (6,4)}{
       \draw[blue] \Point circle[radius=5pt];
       \fill[blue] \Point circle[radius=5pt];}
       \foreach \Point in {(9,3), (14,2), (5,5), (10,2), (12,2), (15,1), (6,4), (17,1), (8,4), (20,0), (11,3)}{
       \draw[green] \Point circle[radius=4pt];
       \fill[green] \Point circle[radius=4pt];}
       \foreach \Point in {(2,10), (1,11), (1,13), (0,16), (3,7)}{
       \draw[red] \Point circle[radius=3pt];
       \fill[red] \Point circle[radius=3pt];}
       \foreach \Point in {(5,5), (3,7), (6,4)}{
       \draw[black] \Point circle[radius=2pt];
       \fill[black] \Point circle[radius=2pt];}
\end{tikzpicture}
\end{center}

In the proof of the theorem we will use two lemmas that are true in the following general setting. Let $G\subset \Sp_4(\bC)$ be a finite group, such that $\bC^4 = V_1\oplus V_2$ as $G$-representations, with $\dim V_i = 2$. Let $\varphi\colon X\to \bC^4/G$ be a symplectic resolution. Then, by~\cite[Thm~1.3]{KaledinSelecta} there is a two-dimensional torus $T$ acting on $X$ and $\bC^4/G$ so that $\varphi$ is equivariant. Clearly $X^T\subset \varphi^{-1}([0])$. Let $x\in X^T$. Then we may decompose $T_x X = T_x(\varphi^{-1}([0]))\oplus V'$ where $V'$ is an eigenspace of the $T$-action. We will call the weights of $T$ on $V'$ the \emph{remaining weights} on $T_x X$. Note that lemmas give an information on the weights of torus actions on $T_x X$ and in the proof of the theorem we will draw conclusions about weights on the dual space $T_x^*X$, by taking negatives of weights on $T_x X$.
\begin{lemma}\label{lemma:trivial-direction}
The remaining weights for the $T$-action on $T_x X$ are of the form $(a,0)$ or $(0,a)$.
\end{lemma}
\begin{proof}
First note that every orbit of the $T$-action on $\bC^4\setminus 0$ is either two-dimensional or has the isotropy group equal to $\bC^*\times 1$ or $1\times \bC^*$.

Now take any $x\in X^T$, any remaining weight $\lambda$ of the $T$-action on $T_x X$ and a one-dimensional eigenspace $V_\lambda$ with this weight which is not contained in the tangent space to the central fibre. By the Luna slice theorem such an eigenspace corresponds to the closure of an orbit $O$ of the $T$-action via an equivariant local \'etale map $U\to T_x X$, where $U$ is an invariant neighbourhood of $x$. In particular $\dim O = 1$, and $\dim \varphi(O) = 1$, as $O$ is not contained in the central fibre of the resolution $\varphi:X\to \bC^4/G$. Therefore $\varphi(O)$ as well as $O$ and $V_{\lambda}$ are stabilized by either $\bC^*\times 1$ or $1\times \bC^*$ and the claim follows.
\end{proof}

For the next claim, consider $\bC^*$ as a subtorus of $T$ embedded with the weight $(1,-1)$.
\begin{lemma}\label{lemma:two-in-two-out}
Let $x\in X^{\bC^*}$. Among the weights of the induced $\bC^*$-action on $T_x X$ two weights are positive and two are negative.
\end{lemma}

\begin{proof}
Let
\begin{gather*}
X_x^+ = \{x\in X \colon \lim_{t\to 0}tx\in X_x\}, \\
X_x^{-} = \{x\in X \colon \lim_{t\to 0}t^{-1}x\in X_x\},
\end{gather*}

where $X_x\subset X^{\bC^*}$ is the connected component containing $x$. We will use the fact that $X_x^{\pm}$ are irreducible, locally closed subsets of $X$ (see~\cite[Thm~4.1]{ABB}).

If at least three weights at $x$ were nonnegative then $\dim X_x^+ \ge 3$ by~\cite[Thm~4.1]{ABB}. Similarly if at least three weights at $x$ are nonpositive then $\dim X_x^- \ge 3$. Suppose that $\dim X_x^+ \ge 3.$ On the other hand $(\bC^4)_0^{+} = \bC^2\times 0$ is two-dimensional and hence also $(\bC^4/G)_0^{+} = (\bC^2\times 0)/G$ is two-dimensional. Since $\dim \varphi^{-1}([0]) = 2$ and the fibres of $\varphi\colon X\setminus \varphi^{-1}([0])\to \bC^4/G$ are of dimension at most one then $\dim \varphi(X_x^{+}) \ge \dim X_{x}^{+}-1 = 2$. As $\varphi(X_x^{+})\subset (\bC^4/G)_0^{+}$ we know that $\dim\varphi(X_x^{+})= 2$ and hence $X_{x}^{+}$ has to be an exceptional divisor of the resolution $\varphi:X\to \bC^4/G$. But the image of such an exceptional divisor is contained in the singular locus of $\bC^{4}/G$ which consists of the image of four planes in $\bC^4$ that have one-dimensional intersection with $\bC^2\times 0$ and so, $\dim \varphi(X_x^{+})= 1$, a contradiction. The case $\dim X_x^- \ge 3$ is completely analogous.
\end{proof}

\begin{proof}[Proof of Theorem~\ref{thm-compasses}]
First, note that $X^T$ is contained in the central fibre. In particular $X^T = S^T$ and as we noted in the Remark~\ref{rmk:fixed-point-weights} the elements of $S^T$ correspond to the vertices of the polytope from the statement.
Taking into account the natural inclusion of the tangent space to the central fibre into the tangent space of $X$ most of the weights can be deduced from the fact that the action of $T$ on $S$ is toric. The polytope spanned by the points marked by dots in the Lemma~\ref{lemma-polytopes} is the polytope of this toric variety. Thus using toric methods one can describe affine cover of $S$ and the weights of the $T$-action on the tangent space to its $T$-fixed points.
Altogether the weights calculated in this way are the ones from the assertion except $\nu_{1,3},\nu_{1,4},\nu_{3,3},\nu_{3,4}$.

Now the calculation of all the weights for the homothety action is easy, since the symplectic form is of weight two with respect to this action, and we can compute at least two weights at each $T$-fixed point, by summing components of each known weight $\nu_{i,j}$. For the remaining weights of the $T$-action we combine Lemmas~\ref{lemma:trivial-direction} and~\ref{lemma:two-in-two-out}. 

For example we know that $\nu_{1,1} = (1,-3)$ and $\nu_{1,2} = (1,-5)$, which gives the weights of the homothety action $-2$ and $-4$ at point corresponding to $(0,16)$. By Proposition~\ref{proposition:weights-on-symplectic-variety} below the remaining weights for homothety are equal to $4$ and $6$. By Lemma~\ref{lemma:trivial-direction} the weight $\nu_{1,3}$ is of the form $(4,0)$ or $(0,4)$ and $\nu_{1,4}$ is of the form $(6,0)$ or $(0,6)$. Since $\nu_{1,1}$ and $\nu_{1,2}$ yield two positive weights for the $\bC^*$-action considered in the Lemma~\ref{lemma:two-in-two-out} and so do $(6,0)$ and $(4,0)$ we have $\nu_{1,3} = (0,4)$ and $\nu_{1,4} = (0,6)$. Other weights are computed analogously.
\end{proof}

\begin{proposition}\label{proposition:weights-on-symplectic-variety}
If $X$ is a smooth $2n$-dimensional variety with a $\bC^*$-action and a symplectic form $\omega$ of weight $k$ then at each fixed point $x$ of $X$ there is a basis $u_1,v_1,\ldots,u_n, v_n$ of $T_x X$ consisting of eigenvectors of $\bC^*$-action, such that the sum of weights of the $\bC^*$-action on $u_i$ and $v_i$ is equal to $-k$ for each $i=1,\ldots, n$.
\end{proposition}

\begin{remark}\label{remark-quotient-coordinates-heuristic}
Note that Theorem~\ref{thm-compasses} follows immediately by Theorem~\ref{thm:smoothness-of-GIT-quotient} when we note that $U_i\cong \bC^4$ is a $T$-invariant neighbourhood of the $i$-th $T$-invariant point (we order points as in rows of the table from the assertion). Nevertheless, since we used the statement of Theorem~\ref{thm-compasses} to guess the open cover $U_i$, we decided to give an independent proof to preserve the logical consequence of our considerations.

More precisely, to find isomorphisms $U_{i}\cong \bC^4$ in Theorem~\ref{thm:smoothness-of-GIT-quotient} we used the compass calculation from this section with a priori assumption on smoothness of the quotient $\Spec \cR/\!/_L\bT$ as a heuristic. To guess the coordinates on each invariant open subset $U_i$ we picked an element $f\in\cR$ of a degree corresponding to the $i$-th fixed point and four elements of $(\cR_f)^{\bT}$ of degrees equal to the predicted weights of the action on the cotangent space.
\end{remark}

\subsection{Dimensions of movable linear systems}\label{section:hilbert}
In this section we use the torus $T$ action on $X$ to give a formula for dimensions of these graded pieces of $\cR(X)$ which correspond to the movable linear systems on some of the resolutions. 

Let $X\to \bC^4/G$ be the resolution corresponding to the linearization of the Picard torus action by a character $(2,1)$. Let $X'\to \bC^4/G$ be the resolution corresponding to the linearization $(1,2)$.

Denote by $P_i$ the fixed points of the $T$-action on $X$ as in the table from Theorem~\ref{thm-compasses}. Let $\{\nu_{i,j}\}_{j=1}^{4}$ denote the compass of $P_i$ in $X$. Let us also denote by $\mu_i(L)$ the weight of the $T$-action on the fibre of $L$ over $P_i$. Note that $\mu_i$ is linear i.e. $\mu_{i}(A+B) = \mu_i(A)+\mu_i(B)$.

\begin{remark}
By Lemma~\ref{lemma:ggbundles} we may compute the weights $\mu_i$ for line bundles $L_1+L_2$ and $L_1$ similarly as for $2L_1+L_2$ in Section~\ref{section:compasses} to obtain (the last column is calculated from the first two ones by the linearity of $\mu_i$):
{\small
\begin{center}
\begin{tabular}{| c | c | c | c |}
\hline
$i$ & $\mu_i(L_1)$ & $\mu_i(L_1+L_2)$ & $\mu_i(L_2)$\\
\hline
$1$ & $(0,4)$ & $(0,12)$ & $(0,8)$\\
\hline
$2$ & $(0,4)$ & $(1,7)$ & $(1,3)$\\
\hline
$3$ & $(0,4)$ & $(3,3)$ & $(3,-1)$\\
\hline
$4$ & $(2,2)$ & $(3,3)$ & $(1,1)$\\
\hline
$5$ & $(3,1)$ & $(3,3)$ & $(0,2)$\\
\hline
$6$ & $(3,1)$ & $(7,1)$ & $(4,0)$\\
\hline
$7$ & $(8,0)$ & $(12,0)$ & $(4,0)$\\
\hline
\end{tabular}
\end{center}
}
\end{remark}

We may now prove the observation on the subdivision of the cone of movable divisors on $X$, see Proposition~\ref{prop-mov-cone} (which is originally due to the work of~\cite{AW}).

\begin{proposition}\label{proposition-mov-subdivision}
There are two symplectic resolutions of $\bC^{4}/G$. The chambers in $\Mov(X)$ corresponding to the nef cones of these resolutions are relative interiors of cones $cone(L_1,L_1+L_2)$ and $cone(L_2,L_1+L_2)$.
\end{proposition}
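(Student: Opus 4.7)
The plan is to realize the two symplectic resolutions as GIT quotients of $\Spec\cR$ corresponding to the two chambers claimed in the statement, and then to rule out the existence of other chambers or resolutions.

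First I would invoke corollary~\ref{corollary-R-gives-resolution} to produce $X_1 := \Spec\cR/\!/_{(2,1)}\bT$ as a crepant resolution of $\bC^4/G$ on which $2L_1+L_2$ is relatively ample. The presentation of $\cR$ carries an involutive symmetry obtained by swapping the two invariant $G$-subrepresentations $V_1,V_2$ in $\bC^4$ -- concretely, it exchanges subscripts $1$ and $2$ among the generators, swaps $s$ with $t$, and replaces the coefficient $b$ by $1-b$. Pushing theorem~\ref{thm:smoothness-of-GIT-quotient} and corollary~\ref{corollary-R-gives-resolution} through this symmetry yields a second crepant resolution $X_2 := \Spec\cR/\!/_{(1,2)}\bT$ on which $L_1+2L_2$ is relatively ample.

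Next I would analyze the GIT wall-and-chamber structure of $\Spec\cR$ inside $\Mov(X)=\operatorname{cone}(L_1,L_2)$. By theorem~\ref{thm:Cox-ring} the only generator of $\cR$ whose $\bT$-degree lies in the strict interior of $\Mov(X)$ is $w_3$, of degree $L_1+L_2$; every other generator has $\bT$-degree either on the boundary of $\Mov(X)$ or outside it. It follows that the only possible GIT wall inside $\Mov(X)$ is the ray through $L_1+L_2$, so that lemma~\ref{lemma:stability} identifies the stable locus uniformly for every character in the open cone $\{a>b>0\}$; together with its symmetric counterpart this shows that the relatively ample cones of $X_1$ and $X_2$ over $\bC^4/G$ are $\operatorname{cone}(L_1,L_1+L_2)$ and $\operatorname{cone}(L_2,L_1+L_2)$ respectively. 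In particular $L_1+2L_2$ is not relatively ample on $X_1$, so $X_1$ and $X_2$ are distinct resolutions of $\bC^4/G$.

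Finally, any symplectic resolution $Y\to\bC^4/G$ has Picard rank two (two exceptional divisors arise from the two conjugacy classes of symplectic reflections in $G$ via the symplectic McKay correspondence, cf.\ proposition~\ref{structure-of-G}(1)), and by proposition~\ref{prop-mov-cone}(1) shares the movable cone $\operatorname{cone}(L_1,L_2)$ with $X_1$. Its relatively ample cone is therefore a top-dimensional subcone of $\operatorname{cone}(L_1,L_2)$; since the only available chambers are the two identified above, $Y$ must be isomorphic over $\bC^4/G$ to $X_1$ or to $X_2$. The main technical obstacle is the identification of the GIT chamber structure itself -- one must verify that the stable locus is constant throughout each open chamber and not only at the particular character $(2,1)$ computed in lemma~\ref{lemma:stability}; the degree analysis above points the way, but the argument of lemma~\ref{lemma:stability} may need to be reexamined to confirm that it depends only on the chamber and not on the specific character chosen within it.
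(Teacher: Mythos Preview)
Your approach is genuinely different from the paper's. The paper does not analyse the GIT chamber structure of $\Spec\cR$ at all; instead it works intrinsically on the resolution $X$ via the torus $T$. It uses the linear maps $\mu_i\colon N^1(X)\to\bR^2$ recording the $T$-weight on the fibre of a line bundle over the fixed point $P_i$, and argues that a wall of $\Nef(X)$, being the class of a contraction that identifies some $T$-fixed points, must lie on a ray where $\mu_i=\mu_j$ for some $i\neq j$. From the table of values one checks that inside $\Mov(X)$ this happens only along the rays $L_1$, $L_1+L_2$, $L_2$, forcing $\Nef(X)=\operatorname{cone}(L_1,L_1+L_2)$.

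Your GIT-wall argument correctly shows that the \emph{only possible} wall in the interior of $\Mov(X)$ is the ray through $L_1+L_2$, so the GIT chamber of $(2,1)$ contains the open cone $\{a>b>0\}$ and hence $\Amp(X_1)\supseteq\operatorname{cone}(L_1,L_1+L_2)^\circ$. But the step ``therefore the relatively ample cone of $X_1$ \emph{equals} $\operatorname{cone}(L_1,L_1+L_2)$, so $L_1+2L_2$ is not ample on $X_1$, so $X_1\neq X_2$'' is circular: you have not excluded the possibility that the GIT chamber is the whole interior of $\Mov(X)$, in which case $X_1=X_2$ and there is a single resolution. Knowing that $(1,1)$ is the only \emph{candidate} wall is not the same as knowing it \emph{is} a wall.

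This gap is easy to close with the material already in the paper. For instance, proposition~\ref{prop:compontents-over-fibre} exhibits the component $Z_P$, whose generic point has $w_{11},w_{12},w_{13},w_3,t$ nonzero and all $w_{2j},s$ zero; by lemma~\ref{lemma:stability} such a point is $(2,1)$-stable, while by the symmetric version of that lemma it is $(1,2)$-unstable. Hence the stable loci for $(2,1)$ and $(1,2)$ differ and the ray through $L_1+L_2$ is a genuine wall. Alternatively, one can read off from the compasses (theorem~\ref{thm-compasses}) that the $T$-invariant curve in $P\cong\bP^2$ joining $P_3$ and $P_4$ satisfies $(L_1+L_2)\cdot C=0$, so $L_1+L_2$ is not ample on $X_1$. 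Either observation repairs your argument; the paper's $\mu_i$-method has the virtue of building this non-ampleness directly into the proof rather than requiring a separate check.
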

\begin{proof}
Consider the homomorphisms $\mu_{i}:N^1(X)\to \bR^2$. The two walls of the chamber $\cC$ containing $2L_1+L_2$ are corresponding to the contractions of $X$, in particular they identify some $T$-fixed points of $X$. Hence each wall has to be spanned by an element $v\in \Mov(X)$ satisfying $\mu_{i}(v) = \mu_{j}(v)$ for some $i\neq j$. The only such elements in $cone(L_1,L_1+L_2)$ are lying on the rays spanned by $L_1+L_2$ and $L_1$. Therefore $\cC = cone(L_1,L_1+L_2)$. The analogous argument, using the homomorphisms $\mu_{i}'\colon N^1(X')\to \bR^2$ corresponding to the $T$-fixed points of $X'$, shows that the chamber containing $L_1+2L_2$ is equal to $cone(L_1+L_2,L_2)$.
\end{proof}

\begin{theorem}\label{thm:generating-functions}
If $h^{0}(X,pL_1+qL_2)_{(a,b)}$ is the dimension of the subspace of sections $H^{0}(X,pL_1+qL_2)$ on which $T$ acts with the weight $(a,b)$, then we have the following generating function for such dimensions for line bundles inside the movable cone: 
\begin{multline*}
\sum_{a,b,p, q\ge 0}h^{0}(X,pL_1+qL_2)_{(a,b)}y_1^p y_2^q t_1^a t_2^b =\\
=\sum_{i=1}^{7}\frac{1}{(1-t^{\mu_i(L_1)}y_1)(1-t^{\mu_i(L_2)}y_2)\prod_{j=1}^4(1-t^{\nu_{i,j}})}.
\end{multline*}
\end{theorem}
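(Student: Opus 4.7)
The strategy is to combine Lefschetz-Riemann-Roch at the seven isolated $T$-fixed points with a vanishing argument, and then sum the resulting rational expressions as geometric series in $y_1$ and $y_2$. First I reduce computing $h^0$ to an equivariant Euler characteristic. By proposition~\ref{prop-mov-cone}(2) the line bundle $L = pL_1+qL_2$ lies in the nef cone of either $X$ (when $p\geq q$) or the flopped resolution $X'$ (when $q\geq p$), and in either case is a nonnegative combination of globally generated bundles by lemma~\ref{lemma:ggbundles}. Since $\pi$ is projective and crepant over the affine target $\bC^4/G$, relative Kawamata-Viehweg vanishing applied to $K_X=0$ yields $R^i\pi_* L=0$ for $i>0$, and taking global sections on the affine base shows $H^i(X,L)=0$ for $i>0$ as $T$-representations. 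Sections of movable divisors on $X$ and $X'$ coincide by the common Cox ring description, so the computation can be carried out on whichever model makes $L$ nef; in particular the $T$-character of $H^0(X,L)$ equals $\chi^T(X,L)$.

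Next I apply equivariant localization. By theorems~\ref{thm:smoothness-of-GIT-quotient} and~\ref{thm-compasses} the $T$-fixed locus on $X$ consists of the seven isolated points $P_i$, each with tangent compass $\{\nu_{i,j}\}_{j=1}^4$. Since $\pi$ is $T$-equivariant and proper, the Lefschetz-Riemann-Roch formula of~\cite{BFQ} computes the equivariant pushforward of $L$ along $\pi$ in the Grothendieck group of $T$-equivariant sheaves on $\bC^4/G$; combining with the vanishing just established and taking global sections on the affine base gives
\begin{equation*}
\chi^{T}(X,L)\;=\;\sum_{i=1}^{7}\frac{t^{\mu_i(L)}}{\prod_{j=1}^{4}\bigl(1-t^{\nu_{i,j}}\bigr)}
\end{equation*}
as an identity in a suitable completion of the character ring of $T$, where $\mu_i(L)\in\bZ^2$ is the $T$-weight of the fibre $L|_{P_i}$.

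Finally, using the linearity $\mu_i(pL_1+qL_2)=p\,\mu_i(L_1)+q\,\mu_i(L_2)$ already recorded before proposition~\ref{proposition-mov-subdivision}, I sum over $(p,q)\in\bN^2$, weighted by $y_1^p y_2^q$. For each fixed index $i$, the summation produces the two additional geometric factors $\bigl(1-t^{\mu_i(L_1)}y_1\bigr)^{-1}\bigl(1-t^{\mu_i(L_2)}y_2\bigr)^{-1}$, multiplying the rational function from step two. Putting the two previous steps together gives the displayed generating identity. The main obstacle is the interpretation of the localization formula in the present non-proper setting: because several of the compass vectors $\nu_{i,j}$ have mixed signs (e.g.\ $\nu_{1,1}=(1,-3)$), individual summands only make sense as rational functions, and one must check that their sum really does represent the bigraded generating function of finite-dimensional $T$-weight spaces. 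This compatibility is provided by the conical $\bC^*$-action on $\bC^4/G$ (lemma~\ref{lemma:weights-homothety}), which makes every $H^0(X,L)_{(a,b)}$ finite-dimensional and bounds the support of the admissible weights, so the formal sum over the seven fixed points converges in the appropriate completion of $\bZ[\bZ^2]$ and matches the manifestly convergent left hand side.
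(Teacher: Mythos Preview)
Your proof is correct and follows essentially the same approach as the paper: Lefschetz--Riemann--Roch at the seven isolated $T$-fixed points, linearity of $\mu_i$ to perform the geometric-series summation in $y_1,y_2$, and Kawamata--Viehweg vanishing (on $X$ or $X'$ according to whether $p\ge q$ or $q\ge p$) to identify $\chi^T$ with the $T$-character of $H^0$. The only difference is organizational---you invoke vanishing first and localization second, whereas the paper does it in the opposite order---and you add a helpful paragraph on why the individual rational summands, whose compass vectors have mixed signs, nonetheless combine to the honest generating function; this point is left implicit in the paper's proof.
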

The computed generating function may be interpreted as the multivariate Hilbert series of a $\bZ_{\ge 0}^2\times \bZ_{\ge 0}^2$-graded subalgebra of $\cR(X)$ consisting of the graded pieces of $\cR(X)$ corresponding to movable line bundles on $X$. This is the interpretation of the theorem that we will use in the next section.

\begin{proof}
By a corollary of the Lefschetz-Riemman-Roch theorem~\cite[Cor~A.3]{BWW}
we have:
\begin{equation*}
\chi^T(X,L) = \sum_{i=1}^{7}\frac{t^{\mu_i(L)}}{\prod_{j=1}^{4}(1 - t^{\nu_{i,j}})}.
\end{equation*}

Using the linearity of $\mu_i$:
\begin{multline*}
\sum_{p, q\ge 0}\chi^T(X,pL_1+qL_2)y_1^p y_2^q =\sum_{p,q\ge 0} \sum_{i=1}^{7}\frac{t^{p\mu_i(L_1)}\cdot t^{q\mu_i(L_2)}}{\prod_{j=1}^{4}(1 - t^{\nu_{i,j}})}y_1^p y_2^q =\\ 
\sum_{i=1}^{7}\frac{1}{(1-t^{\mu_i(L_1)}y_1)(1-t^{\mu_i(L_2)}y_2)\prod_{j=1}^4(1-t^{\nu_{i,j}})}.
\end{multline*}

The assertion follows now by Kawamata-Viehweg vanishing, which implies 
\begin{equation*}
\chi^T(X,pL_1+qL_2) = \sum_{a,b\ge 0}h^{0}(X,pL_1+qL_2)_{(a,b)}t_1^{a}t_2^{b} = \sum_{a,b\ge 0}h^{0}(X',pL_1+qL_2)_{(a,b)}t_1^{a}t_2^{b}.
\end{equation*}
if $p\ge q \ge 0$ and likewise for $q \ge p \ge 0$ on $X'$.
\end{proof}

\begin{example}
The dimensions of the weight spaces corresponding to the lattice points in a head of the polyhedron spanned by weights for the line bundle $2L_1 + L_2$ considered in Remark~\ref{remark:weights-example} and in Section~\ref{section:compasses} can be depicted on the following diagram:
\begin{center}
\begin{tikzpicture}[scale=0.3,>=latex]
\tikzstyle{every node}=[font=\scriptsize]
\node at (0,16) {1};
\node at (0,20) {1};
\node at (0,22) {1};
\node at (1,11) {1};
\node at (1,13) {1};
\node at (1,15) {1};
\node at (1,17) {2};
\node at (1,19) {2};
\node at (1,21) {2};
\node at (2,10) {1};
\node at (2,12) {1};
\node at (2,14) {2};
\node at (2,16) {2};
\node at (2,18) {3};
\node at (2,20) {3};
\node at (2,22) {4};
\node at (3,7) {1};
\node at (3,9) {1};
\node at (3,11) {2};
\node at (3,13) {3};
\node at (3,15) {3};
\node at (3,17) {4};
\node at (3,19) {5};
\node at (3,21) {5};
\node at (4,8) {2};
\node at (4,10) {2};
\node at (4,12) {3};
\node at (4,14) {4};
\node at (4,16) {5};
\node at (4,18) {5};
\node at (4,20) {7};
\node at (4,22) {7};
\node at (5,5) {1};
\node at (5,7) {2};
\node at (5,9) {3};
\node at (5,11) {4};
\node at (5,13) {5};
\node at (5,15) {6};
\node at (5,17) {7};
\node at (5,19) {8};
\node at (5,21) {9};
\node at (6,4) {1};
\node at (6,6) {2};
\node at (6,8) {3};
\node at (6,10) {5};
\node at (6,12) {5};
\node at (6,14) {7};
\node at (6,16) {8};
\node at (6,18) {9};
\node at (6,20) {10};
\node at (6,22) {12};
\node at (7,5) {2};
\node at (7,7) {3};
\node at (7,9) {4};
\node at (7,11) {6};
\node at (7,13) {7};
\node at (7,15) {8};
\node at (7,17) {10};
\node at (7,19) {11};
\node at (7,21) {12};
\node at (8,4) {2};
\node at (8,6) {3};
\node at (8,8) {5};
\node at (8,10) {6};
\node at (8,12) {8};
\node at (8,14) {9};
\node at (8,16) {11};
\node at (8,18) {12};
\node at (8,20) {14};
\node at (8,22) {15};
\node at (9,3) {1};
\node at (9,5) {3};
\node at (9,7) {5};
\node at (9,9) {6};
\node at (9,11) {8};
\node at (9,13) {10};
\node at (9,15) {11};
\node at (9,17) {13};
\node at (9,19) {15};
\node at (9,21) {16};
\node at (10,2) {1};
\node at (10,4) {2};
\node at (10,6) {4};
\node at (10,8) {6};
\node at (10,10) {8};
\node at (10,12) {9};
\node at (10,14) {12};
\node at (10,16) {13};
\node at (10,18) {15};
\node at (10,20) {17};
\node at (10,22) {19};
\node at (11,3) {2};
\node at (11,5) {4};
\node at (11,7) {6};
\node at (11,9) {8};
\node at (11,11) {10};
\node at (11,13) {12};
\node at (11,15) {14};
\node at (11,17) {16};
\node at (11,19) {18};
\node at (11,21) {20};
\node at (12,2) {1};
\node at (12,4) {4};
\node at (12,6) {5};
\node at (12,8) {8};
\node at (12,10) {10};
\node at (12,12) {12};
\node at (12,14) {14};
\node at (12,16) {17};
\node at (12,18) {18};
\node at (12,20) {21};
\node at (12,22) {23};
\node at (13,3) {2};
\node at (13,5) {5};
\node at (13,7) {7};
\node at (13,9) {9};
\node at (13,11) {12};
\node at (13,13) {14};
\node at (13,15) {16};
\node at (13,17) {19};
\node at (13,19) {21};
\node at (13,21) {23};
\node at (14,2) {2};
\node at (14,4) {4};
\node at (14,6) {7};
\node at (14,8) {9};
\node at (14,10) {12};
\node at (14,12) {14};
\node at (14,14) {17};
\node at (14,16) {19};
\node at (14,18) {22};
\node at (14,20) {24};
\node at (14,22) {27};
\node at (15,1) {1};
\node at (15,3) {3};
\node at (15,5) {6};
\node at (15,7) {9};
\node at (15,9) {11};
\node at (15,11) {14};
\node at (15,13) {17};
\node at (15,15) {19};
\node at (15,17) {22};
\node at (15,19) {25};
\node at (15,21) {27};
\node at (16,2) {2};
\node at (16,4) {5};
\node at (16,6) {7};
\node at (16,8) {11};
\node at (16,10) {13};
\node at (16,12) {16};
\node at (16,14) {19};
\node at (16,16) {22};
\node at (16,18) {24};
\node at (16,20) {28};
\node at (16,22) {30};
\node at (17,1) {1};
\node at (17,3) {4};
\node at (17,5) {7};
\node at (17,7) {10};
\node at (17,9) {13};
\node at (17,11) {16};
\node at (17,13) {19};
\node at (17,15) {22};
\node at (17,17) {25};
\node at (17,19) {28};
\node at (17,21) {31};
\node at (18,2) {3};
\node at (18,4) {6};
\node at (18,6) {9};
\node at (18,8) {12};
\node at (18,10) {16};
\node at (18,12) {18};
\node at (18,14) {22};
\node at (18,16) {25};
\node at (18,18) {28};
\node at (18,20) {31};
\node at (18,22) {35};
\node at (19,1) {1};
\node at (19,3) {4};
\node at (19,5) {8};
\node at (19,7) {11};
\node at (19,9) {14};
\node at (19,11) {18};
\node at (19,13) {21};
\node at (19,15) {24};
\node at (19,17) {28};
\node at (19,19) {31};
\node at (19,21) {34};
\node at (20,0) {1};
\node at (20,2) {3};
\node at (20,4) {7};
\node at (20,6) {10};
\node at (20,8) {14};
\node at (20,10) {17};
\node at (20,12) {21};
\node at (20,14) {24};
\node at (20,16) {28};
\node at (20,18) {31};
\node at (20,20) {35};
\node at (20,22) {38};
\node at (21,1) {2};
\node at (21,3) {5};
\node at (21,5) {9};
\node at (21,7) {13};
\node at (21,9) {16};
\node at (21,11) {20};
\node at (21,13) {24};
\node at (21,15) {27};
\node at (21,17) {31};
\node at (21,19) {35};
\node at (21,21) {38};
\node at (22,2) {4};
\node at (22,4) {7};
\node at (22,6) {11};
\node at (22,8) {15};
\node at (22,10) {19};
\node at (22,12) {22};
\node at (22,14) {27};
\node at (22,16) {30};
\node at (22,18) {34};
\node at (22,20) {38};
\node at (22,22) {42};
\node at (23,1) {2};
\node at (23,3) {6};
\node at (23,5) {10};
\node at (23,7) {14};
\node at (23,9) {18};
\node at (23,11) {22};
\node at (23,13) {26};
\node at (23,15) {30};
\node at (23,17) {34};
\node at (23,19) {38};
\node at (23,21) {42};
\node at (24,0) {1};
\node at (24,2) {4};
\node at (24,4) {9};
\node at (24,6) {12};
\node at (24,8) {17};
\node at (24,10) {21};
\node at (24,12) {25};
\node at (24,14) {29};
\node at (24,16) {34};
\node at (24,18) {37};
\node at (24,20) {42};
\node at (24,22) {46};
\node at (25,1) {2};
\node at (25,3) {6};
\node at (25,5) {11};
\node at (25,7) {15};
\node at (25,9) {19};
\node at (25,11) {24};
\node at (25,13) {28};
\node at (25,15) {32};
\node at (25,17) {37};
\node at (25,19) {41};
\node at (25,21) {45};
\node at (26,0) {1};
\node at (26,2) {5};
\node at (26,4) {9};
\node at (26,6) {14};
\node at (26,8) {18};
\node at (26,10) {23};
\node at (26,12) {27};
\node at (26,14) {32};
\node at (26,16) {36};
\node at (26,18) {41};
\node at (26,20) {45};
\node at (26,22) {50};

\draw[very thin] (0,22)--(0,20)--(0,16)--(1,11)--(3,7)--(5,5)--(6,4)--(10,2)--(20,0)--(24,0)--(26,0);

\end{tikzpicture}
\end{center}
\end{example}

\subsection{The Cox ring}\label{section:Cox-LS}
Using the generating function from Theorem~\ref{thm:generating-functions} we may now prove Theorem~\ref{thm:Cox-ring}, i.e. show that the elements from the statement are indeed sufficient to generate the Cox ring of $X$. We outline an argument using the methods of Section~\ref{section:CM-estimation}.

Denote by $\cR(X)$ the Cox ring of $X$ and by $\cR$ the subring of $\cR(X)$ generated by the elements from the statement of Theorem~\ref{thm:Cox-ring}.

\begin{lemma}
The Cox ring of $X$ is generated by $s,t$ and by
\begin{equation*}
\cR(X)_{\ge 0} := \bigoplus_{p,q\ge 0}H^{0}(X, pL_1 + qL_2).
\end{equation*}
\end{lemma}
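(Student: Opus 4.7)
The plan is to decompose each nonzero homogeneous element of $\cR(X)$ into a product of powers of $s, t$ and a section in $\cR(X)_{\ge 0}$, by reading off the multiplicities along the two exceptional divisors. Given a nonzero $f \in \cR(X)_{(p,q)} = H^{0}(X, pL_1+qL_2)$, I would write
\begin{equation*}
\div_0(f) = a\, \ovl{E}_1 + b\, \ovl{E}_2 + D',
\end{equation*}
where $a, b \ge 0$ are the orders of vanishing of $f$ along $\ovl{E}_1 = \{s=0\}$ and $\ovl{E}_2 = \{t = 0\}$ (irreducible by the proof of corollary~\ref{corollary-R-gives-resolution}), and $D'$ is an effective divisor with no component equal to $\ovl{E}_1$ or $\ovl{E}_2$. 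The degree matrix of theorem~\ref{thm:Cox-ring} identifies $[\ovl{E}_1] = -2L_1 + L_2$ and $[\ovl{E}_2] = L_1 - 2L_2$, so $[D'] = (p+2a-b)L_1 + (q-a+2b)L_2$. Once $[D']$ is shown to have non-negative coefficients, $f' := f/(s^a t^b)$ will be a section of a line bundle in $\cR(X)_{\ge 0}$, and the equality $f = s^a t^b f'$ will place $f$ in the subalgebra generated by $s, t$, and $\cR(X)_{\ge 0}$. Since every element of $\cR(X)$ is a finite sum of homogeneous ones, the lemma will follow.

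The key step is therefore the claim $[D']\in cone(L_1,L_2) = \Mov(X)$. Using $L_i\cdot C_j = \delta_{ij}$, this reduces to $[D']\cdot C_i \ge 0$ for $i=1,2$. I would argue component-wise: for any prime component $D_j$ of $D'$, $D_j \neq \ovl{E}_i$, and $\ovl{E}_i$ is swept out by the generic fibers of $\pi|_{\ovl{E}_i}\colon\ovl{E}_i \to \Sigma$, whose numerical class is $C_i$. If $D_j$ contained every such generic fiber, it would contain the dense union of these fibers, hence all of $\ovl{E}_i$, forcing $D_j = \ovl{E}_i$ by dimension and irreducibility --- contradicting the assumption. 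Hence a generic representative of $C_i$ is not contained in $D_j$; since intersection numbers depend only on numerical classes, I can compute $D_j \cdot C_i$ using such a representative to obtain a nonnegative count of geometric intersection points. Summing over the positive-multiplicity components of $D'$ yields $[D']\cdot C_i\ge 0$.

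The main obstacle will be exactly this intersection-theoretic step --- it hinges on the geometric description of $\ovl{E}_i$ as a family of rational curves over $\Sigma$, which is available from the earlier analysis of the fibres of $\pi$ (proposition~\ref{sing-quotient} and the proposition immediately following). Once established, the decomposition $f = s^a t^b f'$ exhibits $f$ as a member of the subalgebra generated by $s$, $t$, and $\cR(X)_{\ge 0}$, completing the argument.
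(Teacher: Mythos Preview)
Your proposal is correct and follows essentially the same route as the paper's proof: decompose an effective divisor into its exceptional part plus a remainder $D'$ with no exceptional components, and observe that $D'\cdot C_i\ge 0$ forces $[D']\in cone(L_1,L_2)$. The paper phrases $D'$ as the strict transform $\ovl{D}$ of a divisor on $\bC^4/G$ and simply asserts $\ovl{D}\cdot C_i\ge 0$; you supply the justification by moving $C_i$ within its family on $\ovl{E}_i$ to a representative not contained in $D_j$, which is a welcome bit of added detail but not a different method.
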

\begin{proof}
This is a particular case of Proposition~\ref{prop:Mov-and-fixed-generate-Cox}.
\end{proof}

Thus to prove that $\cR = \cR(X)$ it suffices to show that $\cR$ contains $\cR(X)_{\ge 0}$. The following lemma reduces the problem further, to finitely many graded pieces with respect to $\bZ^2$-grading by characters of the Picard torus of $X$.

\begin{lemma}\label{lemma-useCMregularity}
$\cR(X)_{\ge 0}$ is generated by the sections of all linear spaces $H^{0}(X,L)$ for $L \in \cS\cup \cS'$ where $\cS:=\{\cO_X,L_1,L_1+L_2,2L_1,2L_1+L_2,2L_1+2L_2,3L_1+L_2,3L_1+2L_2,4L_1+2L_2\}$ and $\cS':= \{L_2,2L_2,L_1+2L_2,L_1+3L_2,2L_1+3L_2,2L_1+4L_2\}$.
\end{lemma}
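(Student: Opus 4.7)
The plan is to invoke the multigraded Castelnuovo-Mumford regularity from~\cite{multigraded_regularity}, reducing the question of generation of $\cR(X)_{\ge 0}$ to a finite list of graded pieces. Since the two symplectic resolutions $X, X'$ of $\bC^4/G$ are small modifications of one another and hence isomorphic in codimension one, we have canonical identifications $H^0(X, D) = H^0(X', D)$ for every Weil divisor class $D$. This allows us to split the argument by chamber: for $p \ge q \ge 0$, work on $X$, whose nef cone is $cone(L_1, L_1 + L_2)$; for $q \ge p \ge 0$, work on $X'$, whose nef cone is $cone(L_1 + L_2, L_2)$. Every lattice point $(p, q)$ with $p, q \ge 0$ lies in one of these two chambers, and the sets $\cS, \cS'$ serve as the candidate base cases on the two sides, so it suffices to prove the corresponding generation statement on each chamber separately.

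On $X$, take the nef generators $B_1 := L_1$ and $B_2 := L_1 + L_2$ as reduction vectors; both are globally generated by lemma~\ref{lemma:ggbundles}. The multigraded Castelnuovo-Mumford regularity theorem states that if a coherent sheaf $\mathcal{F}$ is $0$-regular with respect to $(B_1, B_2)$---meaning $H^i(X, \mathcal{F}(-u)) = 0$ for every $i > 0$ and every $u$ which is a sum of $i$ of the $B_j$---then $\mathcal{F} \otimes \cO_X(B_j)$ is again $0$-regular and the multiplication map $H^0(\mathcal{F}) \otimes H^0(B_j) \to H^0(\mathcal{F} \otimes \cO_X(B_j))$ is surjective. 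Iterating, once $\cO_X(M_0)$ is $0$-regular for every $M_0 \in \cS$, each $H^0(X, M_0 + aB_1 + bB_2)$ with $a, b \ge 0$ is generated by $H^0(M_0)$, $H^0(B_1)$ and $H^0(B_2)$, all of which are sections indexed by $\cS$ (note $B_1, B_2 \in \cS$). An elementary check shows that every $(p, q)$ with $p \ge q \ge 0$ decomposes as $(p_0, q_0) + a(1, 0) + b(1, 1)$ with $(p_0, q_0) \in \cS$ and $a, b \ge 0$, so $\cS$ covers this chamber. The symmetric argument on $X'$ with reduction vectors $L_2, L_1 + L_2$ handles the other chamber via $\cS'$.

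Verifying the $0$-regularity of each $M_0 \in \cS$ reduces to a finite list of vanishings $H^i(X, M_0 - u) = 0$ for $i \in \{1, 2, 3, 4\}$ and $u$ a sum of $i$ of the $B_j$. Since $K_X = 0$, Kawamata-Viehweg directly gives these vanishings whenever $M_0 - u$ is big and nef; when $M_0 - u$ is not big and nef, we apply Serre duality to rewrite $H^i(X, M_0 - u) \iso H^{4-i}(X, u - M_0)^{\vee}$ and reduce to the vanishing of $H^{4-i}(X, u - M_0)$ by Kawamata-Viehweg or direct inspection. The set $\cS$ is chosen minimally so that this finite check closes. The main obstacle lies in shifts $M_0 - u$ sitting on walls of the nef cone, where Kawamata-Viehweg does not apply directly because the divisor in question is nef but not big; for those one combines Serre duality with the dimension counts supplied by theorem~\ref{thm:generating-functions} and with the explicit description of the central fibre from section~\ref{section:central-fibre}.
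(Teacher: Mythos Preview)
Your overall strategy---split into chambers, use the multigraded Castelnuovo--Mumford regularity of \cite{multigraded_regularity} with the globally generated bundles $L_1$ and $L_1+L_2$, and appeal to Kawamata--Viehweg---is the same as the paper's. But there is a genuine gap in how you verify regularity.

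You try to show that every $M_0\in\cS$ is $0$-regular with respect to the pair $(L_1,L_1+L_2)$. This forces you to check vanishings of the form $H^i(X,M_0-u)=0$ where $M_0-u$ is \emph{anti}-nef: for instance, already $M_0=\cO_X$ requires $H^1(X,-L_1)=H^1(X,-L_1-L_2)=0$, and $M_0=2L_1$ requires $H^1(X,L_1-L_2)=0$. Your proposed tool for these is Serre duality, $H^i(X,M_0-u)\cong H^{4-i}(X,u-M_0)^\vee$, but $X$ is not proper over $\bC$---it is only projective over the affine variety $\bC^4/G$---so the naive Serre duality isomorphism is not available. (Indeed, $H^4(X,L)=0$ for every $L$ by the fibre-dimension bound, whereas $H^0(X,-L)$ is certainly not always zero.) The appeal to theorem~\ref{thm:generating-functions} does not help either: that result computes $h^0$, not higher cohomology.

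The paper sidesteps this difficulty by proving regularity of the line bundles \emph{outside} $\cS$ rather than of those in $\cS$, and by varying the family $\{B_j\}$: it runs the argument three times, with $\{L_1\}$, with $\{L_1+L_2\}$, and with $\{L_1,L_1+L_2\}$, each time choosing the region of $(m,n)$ so that every shift $L-u$ stays in the nef cone. Then only the relative Kawamata--Viehweg vanishing for nef divisors (together with $H^i=0$ for $i>2$ from the fibre-dimension bound) is needed, and no duality on a non-proper variety enters.
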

\begin{proof}
By virtue of Lemma~\ref{lemma:ggbundles} this follows from Proposition~\ref{prop:CMestimate} with $r=2$ and all $m_1 = m_2 = 1$ for cones $cone(L_1,L_1+L_2)$ and $cone(L_2,L_1+L_2)$..
\end{proof}

Hence we reduced the problem to showing that $\cR$ contains spaces of global sections only for these finitely many line bundles which are elements of $\cS\cup \cS'$ in Lemma~\ref{lemma-useCMregularity}. This, with the help of computer algebra, can be done with the use of the previous section, namely by Theorem~\ref{thm:generating-functions} in which we computed the Hilbert series of $\cR(X)_{\ge 0}$. 

\begin{lemma}
$\cR$ contains $H^0(X,L)$ for each $L\in \cS\cup \cS'$, where $\cS$ is as in the Lemma~\ref{lemma-useCMregularity}.
\end{lemma}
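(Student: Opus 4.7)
The plan is to reduce the statement to a finite computer-algebra verification, using the inclusion $\cR\subseteq \cR(X)$ and the generating function from Theorem~\ref{thm:generating-functions}. Both sides are graded not only by $\Pic(X)$ but also by the character lattice of $T$, since the generators of $\cR$ listed in Theorem~\ref{thm:Cox-ring} are $T$-weight vectors and the $T$-action on $X$ preserves each linear system $|L|$. Hence, for a fixed $L\in\cS$, proving $\cR_L=H^0(X,L)$ is equivalent to showing
\begin{equation*}
\dim_\bC \cR_{L,(a,b)} \;=\; \dim_\bC H^0(X,L)_{(a,b)} \qquad \text{for every } (a,b)\in\bZ^2.
\end{equation*}

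First, I would compute the right-hand sides. Theorem~\ref{thm:generating-functions} provides an explicit rational expression for $\sum_{p,q,a,b}h^0(X,pL_1+qL_2)_{(a,b)}\,y_1^py_2^qt_1^at_2^b$. Expanding this as a power series, for each of the nine bundles $L=pL_1+qL_2\in\cS$ one reads off the finite collection of $T$-weights $(a,b)$ for which the weight space is nontrivial, together with its dimension. Since the movable cone contains $\cS$ and the Kawamata-Viehweg vanishing used in Theorem~\ref{thm:generating-functions} applies, the resulting numbers are precisely $\dim H^0(X,L)_{(a,b)}$. The example table displayed after Theorem~\ref{thm:generating-functions} for $L=2L_1+L_2$ illustrates the kind of output obtained.

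Next, I would compute the left-hand sides. Writing $\cR = \bC[W_{01},\ldots,W_{3},S,T]/I$, where the variables correspond to the 20 generators from Theorem~\ref{thm:Cox-ring} and $I$ is their ideal of relations, the ideal $I$ is computable by elimination from the embedding $\cR\subset\bC[x_1,y_1,x_2,y_2][t_1^{\pm 1},t_2^{\pm 1}]$ given by the explicit formulas preceding Theorem~\ref{thm:Cox-ring}. Both the Picard bigrading and the $T$-bigrading from the degree matrix in Theorem~\ref{thm:Cox-ring} refine to a $\bZ^4$-grading, which is respected by $I$. Choosing a monomial order compatible with this multigrading and computing a Gröbner basis of $I$ in Singular, one obtains for each $L\in\cS$ and each relevant $(a,b)$ the dimension $\dim \cR_{L,(a,b)}$ by enumerating standard monomials of the prescribed quadrigraded degree.

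The two computations are then compared for every $L\in\cS$ and every weight $(a,b)$ in the finite support of $H^0(X,L)$. Because $\cR\subseteq\cR(X)$ holds tautologically in every multidegree, each equality of dimensions upgrades to an equality of weight spaces, and summing over $(a,b)$ gives $\cR_L=H^0(X,L)$. The main obstacle is purely computational: the ring has $20$ generators, the ambient multigrading lies in $\bZ^4$, and the largest bundle $4L_1+2L_2\in\cS$ has a sizeable total dimension of sections. However, the $T$-refinement breaks each space $H^0(X,L)$ into weight subspaces that are individually small (as already visible in the diagram after Theorem~\ref{thm:generating-functions}), so only a bounded and manageable list of equalities between integers needs to be certified.
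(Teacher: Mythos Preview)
Your overall strategy---compare graded dimensions on both sides, using the inclusion $\cR\subseteq\cR(X)$ to promote numerical equality to actual equality---is exactly the paper's approach. There is, however, a genuine gap in your description of how the verification terminates.

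You repeatedly assert that for a fixed $L\in\cS$ the set of $T$-weights $(a,b)$ with $H^0(X,L)_{(a,b)}\neq 0$ is finite, and that the check amounts to ``a bounded and manageable list of equalities between integers''. This is false: $X$ is not proper, and each $H^0(X,L)$ is infinite-dimensional. The polyhedron in Remark~\ref{remark:weights-example} has a tail equal to the entire positive quadrant, so for every $L\in\cS$ there are infinitely many nontrivial weight spaces. Comparing them one by one is not a finite procedure.

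What the paper actually does (and what makes the verification finite) is to compare \emph{Hilbert series}, i.e.\ the generating functions $\sum_{a,b}\dim \cR_{L,(a,b)}\,t_1^a t_2^b$ and $\sum_{a,b}\dim H^0(X,L)_{(a,b)}\,t_1^a t_2^b$, as rational functions in $t_1,t_2$. The right-hand side is already packaged as a rational function by Theorem~\ref{thm:generating-functions}. For the left-hand side one computes the multigraded Hilbert series of $\cR$ (Macaulay2 does this from a free resolution or a Gr\"obner basis) and then extracts from it the $T$-graded series for each Picard degree $L\in\cS$; the paper flags that the Picard grading is not positive, which is the technical obstacle in this extraction step. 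Equality of two rational functions is then a genuinely finite check. Your Gr\"obner-basis computation is the right input, but the conclusion must be stated and verified at the level of series, not as a finite list of weight-space dimensions.
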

\begin{proof}
We calculate the Hilbert series of $\cR$ in Macaulay2~\cite{M2}. It is of the form:
\begin{equation*}
\frac{1}{(1-y_1^{-2}y_2)(1-y_1y_2^{-2})}\cdot F(y_1,y_2,t_1,t_2),
\end{equation*}
where:
\begin{equation*}
F(y_1,y_2,t_1,t_2) = \frac{f(y_1,y_2,t_1,t_2)}{(1-t_2^4)(1-t_1^4)(1-y_2t_1t_2^3)(1-y_2t_1^4)(1-y_1t_2^4)(1-y_1t_1^3t_2)(1-y_1y_2t_1^3t_2^3)}
\end{equation*}
and 
{\tiny
\begin{align*}
f &= 1+y_1t_1^2t_2^2+y_2t_1^2t_2^2+t_1t_2+y_1^2t_1^4t_2^4+
y_1y_2t_1^4t_2^4+y_2^2t_1^4t_2^4+y_1t_1^5t_2+y_1t_1^3t_2^3+y_1t_1^2t_2^4+y_2t_1^4t_2^2+y_2t_1^3t_2^3+
y_2t_1t_2^5+t_1^3t_2+\\ 
& t_1^2t_2^2+t_1t_2^3-y_1^2y_2t_1^8t_2^4-y_1^2y_2t_1^6t_2^6-y_1^2y_2t_1^5t_2^7-y_1y_2^2t_1^7t_2^5-y_1y_2^2t_1^6t_2^6-y_1y_2^2t_1^4t_2^8-y_1^2t_1^5t_2^5-y_1y_2t_1^7t_2^3-2y_1y_2t_1^6t_2^4-\\
& y_1y_2t_1^5t_2^5-2y_1y_2t_1^4t_2^6-y_1y_2t_1^3t_2^7-y_2^2t_1^5t_2^5-y_1t_1^6t_2^2-y_1t_1^4t_2^4-y_1t_1^3t_2^5-y_1t_1^2t_2^6-y_2t_1^6t_2^2-y_2t_1^5t_2^3-y_2t_1^4t_2^4-y_2t_1^2t_2^6+\\ 
& t_1^6+t_1^4t_2^2+t_1^3t_2^3+t_1^2t_2^4+t_2^6+y_1^2y_2^2t_1^9t_2^7+y_1^2y_2^2t_1^7t_2^9+
y_1^2y_2t_1^6t_2^8+y_1y_2^2t_1^8t_2^6-y_1^2t_1^8t_2^4-y_1^2t_1^4t_2^8-y_1y_2t_1^9t_2^3-y_1y_2t_1^7t_2^5-\\ 
& 2y_1y_2t_1^6t_2^6-y_1y_2t_1^5t_2^7-y_1y_2t_1^3t_2^9-y_2^2t_1^8t_2^4-y_2^2t_1^4t_2^8-y_1t_1^9t_2-y_1t_1^7t_2^3-2y_1t_1^6t_2^4-2y_1t_1^5t_2^5-y_1t_1^4t_2^6-2y_1t_1^3t_2^7-y_1t_1^2t_2^8-\\ 
& y_2t_1^8t_2^2-2y_2t_1^7t_2^3-y_2t_1^6t_2^4-2y_2t_1^5t_2^5-2y_2t_1^4t_2^6-y_2t_1^3t_2^7-y_2t_1t_2^9+y_1^2y_2t_1^{12}t_2^4+y_1^2y_2t_1^{10}t_2^6+2y_1^2y_2t_1^9t_2^7+2y_1^2y_2t_1^8t_2^8+\\
& y_1^2y_2t_1^7t_2^9+2y_1^2y_2t_1^6t_2^{10}+y_1^2y_2t_1^5t_2^{11}+y_1y_2^2t_1^{11}t_2^5+2y_1y_2^2t_1^{10}t_2^6+ y_1y_2^2t_1^9t_2^7+2y_1y_2^2t_1^8t_2^8+2y_1y_2^2t_1^7t_2^9+y_1y_2^2t_1^6t_2^{10}+\\ 
& y_1y_2^2t_1^4t_2^{12}+ y_1^2t_1^9t_2^5+y_1^2t_1^5t_2^9+y_1y_2t_1^{10}t_2^4+y_1y_2t_1^8t_2^6+2y_1y_2t_1^7t_2^7+y_1y_2t_1^6t_2^8+y_1y_2t_1^4t_2^{10}+y_2^2t_1^9t_2^5+ y_2^2t_1^5t_2^9-y_1t_1^5t_2^7-\\
& y_2t_1^7t_2^5-t_1^6t_2^4-t_1^4t_2^6-y_1^2y_2^2t_1^{13}t_2^7-y_1^2y_2^2t_1^{11}t_2^9-y_1^2y_2^2t_1^{10}t_2^{10}-y_1^2y_2^2t_1^9t_2^{11}-y_1^2y_2^2t_1^7t_2^{13}+y_1^2y_2t_1^{11}t_2^7+y_1^2y_2t_1^9t_2^9+\\ 
& y_1^2y_2t_1^8t_2^{10}+y_1^2y_2t_1^7t_2^{11}+y_1y_2^2t_1^{11}t_2^7+y_1y_2^2t_1^{10}t_2^8+y_1y_2^2t_1^9t_2^9+y_1y_2^2t_1^7t_2^{11}+y_1^2t_1^8t_2^8+y_1y_2t_1^{10}t_2^6+ 2y_1y_2t_1^9t_2^7+y_1y_2t_1^8t_2^8+\\ 
& 2y_1y_2t_1^7t_2^9+y_1y_2t_1^6t_2^{10}+y_2^2t_1^8t_2^8+y_1t_1^9t_2^5+y_1t_1^7t_2^7+
y_1t_1^6t_2^8+y_2t_1^8t_2^6+y_2t_1^7t_2^7+y_2t_1^5t_2^9-y_1^2y_2^2t_1^{12}t_2^{10}-y_1^2y_2^2t_1^{11}t_2^{11}-\\
& y_1^2y_2^2t_1^{10}t_2^{12}-y_1^2y_2t_1^{12}t_2^8-y_1^2y_2t_1^{10}t_2^{10}-y_1^2y_2t_1^9t_2^{11}-y_1y_2^2t_1^{11}t_2^9-y_1y_2^2t_1^{10}t_2^{10}-y_1y_2^2t_1^8t_2^{12}-y_1^2t_1^9t_2^9+y_1y_2t_1^9t_2^9-y_2^2t_1^9t_2^9-\\
& y_1^2y_2^2t_1^{12}t_2^{12}-y_1^2y_2t_1^{11}t_2^{11}-y_1y_2^2t_1^{11}t_2^{11}-y_1^2y_2^2t_1^{13}t_2^{13}. 
\end{align*}
}
 Then, using Singular~\cite{Singular} we extract from it Hilbert series for each of the vector spaces $\cR_L$, $L\in \cS$, graded by characters of $T$. Denote the Hilbert series corresponding to $pL_1+qL_2$ by $F_{p,q}$. Then we have:
 \begin{align*}
  F_{0,0}(t_1,t_2) &= \tfrac{F(1,1,t_1,t_2) + F(\epsilon,\epsilon^2,t_1,t_2) + F(\epsilon^2,\epsilon,t_1,t_2)}{3}, \\
 F_{1,0}(t_1,t_2) &= \tfrac{F(1,1,t_1,t_2) + \epsilon^2 F(\epsilon,\epsilon^2,t_1,t_2) + \epsilon F(\epsilon^2,\epsilon,t_1,t_2)}{3}, \\
 F_{0,1}(t_1,t_2) &= \tfrac{F(1,1,t_1,t_2) + \epsilon F(\epsilon,\epsilon^2,t_1,t_2) + \epsilon^2 F(\epsilon^2,\epsilon,t_1,t_2)}{3},\\
 F_{1,1}(t_1,t_2) &=F_{0,0}(t_1,t_2) - F(0,0,t_1,t_2), \\
 F_{2,0}(t_1,t_2) &= F_{0,1}(t_1,t_2) - \tfrac{\partial F}{\partial y_2}(0,0,t_1,t_2),\\
 F_{2,1}(t_1,t_2) &= F_{1,0}(t_1,t_2) - \tfrac{\partial F}{\partial y_1}(0,0,t_1,t_2) - \tfrac{1}{2}\tfrac{\partial^2 F}{\partial y_2^2}(0,0,t_1,t_2),
 \end{align*}
 \begin{align*}
 F_{2,2}(t_1,t_2) &= F_{0,0}(t_1,t_2) - \tfrac{\partial^{2} F}{\partial y_1\partial y_2}(0,0,t_1,t_2) - \tfrac{1}{3!}\tfrac{\partial^{3} F}{\partial y_1^3}(0,0,t_1,t_2) -\tfrac{1}{3!}\tfrac{\partial^{3} F}{\partial y_2^3}(0,0,t_1,t_2),\\ 
 F_{3,1}(t_1,t_2) &= F_{2,0}(t_1,t_2) - \tfrac{1}{2}\tfrac{\partial^3 F}{\partial y_1\partial y_2^2}(0,0,t_1,t_2)-\tfrac{1}{2}\tfrac{\partial^2 F}{\partial y_1^2}(0,0,t_1,t_2)-\tfrac{1}{4!}\tfrac{\partial^4 F}{\partial y_2^4}(0,0,t_1,t_2),\\
 F_{3,2}(t_1,t_2) &= F_{2,1}(t_1,t_2) - \tfrac{1}{2}\tfrac{\partial^3 F}{\partial y_1^2\partial y_2}(0,0,t_1,t_2) - \tfrac{1}{3!}\tfrac{\partial^4 F}{\partial y_1\partial y_2^3}(0,0,t_1,t_2)- \tfrac{1}{4!}\tfrac{\partial^4 F}{\partial y_1^4}(0,0,t_1,t_2)-\\
 &-\tfrac{1}{5!}\tfrac{\partial^5 F}{\partial y_2^5}(0,0,t_1,t_2),\\
 F_{4,2}(t_1,t_2) &= F_{3,1}(t_1,t_2) - \tfrac{1}{3!}\tfrac{\partial^4 F}{\partial y_1^3\partial y_2}(0,0,t_1,t_2) - \tfrac{1}{2!3!}\tfrac{\partial^5 F}{\partial y_1^2\partial y_2^3}(0,0,t_1,t_2)- \tfrac{1}{5!}\tfrac{\partial^6 F}{\partial y_1\partial y_2^5}(0,0,t_1,t_2)- \\
 & - \tfrac{1}{5!}\tfrac{\partial^5 F}{\partial y_1^5}(0,0,t_1,t_2)-\tfrac{1}{7!}\tfrac{\partial^7 F}{\partial y_2^7}(0,0,t_1,t_2).
 \end{align*}

Using Singular we check that for each $L\in \cS$ the Hilbert series of $\cR_L$ agrees with the Hilbert series for $\cR(X)_L$ which we calculated in Theorem~\ref{thm:generating-functions}. Since $\cR\subset \cR(X)$ it means that $\cR_L = \cR(X)_L$.

As in the case $G = \bZ_2\wr S_2$ by symmetry of the Hilbert series of $\cR$ and of Hilbert series of $\cR(X)_{\ge 0}$ (see Theorem~\ref{thm:generating-functions}) if $H^0(X,aL_1 + bL_2)\subset \cR$ then $H^0(X,bL_1+aL_2)\subset \cR$. In particular from what we proven it follows also that $\cR_L = \cR(X)_L$ for $L\in \cS'$.
\end{proof}

\section{{$S_3$} and {$\bZ_2\wr S_2$}}
\label{section:two-examples}

As outlined in Section~\ref{section:sympl-strategy}, the same method may be applied for other quotients of finite group actions via reducible symplectic representations. Here we present the main result -- the description of the Cox ring -- and list the most important intermediate results in the two simpler examples.
\begin{enumerate}[label = (\alph*)]
\item The action of the symmetric group $S_3$ on $\bC^4$ generated by matrices:
\begin{equation*}
\left(\begin{smallmatrix}
\epsilon & 0 & 0 & 0\\
0 & \epsilon^{-1} & 0 & 0\\
0 & 0 & \epsilon & 0\\
0 & 0 & 0 & \epsilon^{-1}
\end{smallmatrix}\right), \qquad \left(\begin{smallmatrix}
0 & 1 & 0 & 0\\
1 & 0 & 0 & 0\\
0 & 0 & 0 & 1\\
0 & 0 & 1 & 0
\end{smallmatrix}\right),
\end{equation*}
where $\epsilon = e^{2\pi i/3}$ is a $3$rd root of unity.
\item The action of the wreath product $\bZ_2\wr S_2$ on $\bC^4$ by matrices:
\begin{equation*}
\left(\begin{smallmatrix}
1 & 0 & 0 & 0\\
0 & -1 & 0 & 0\\
0 & 0 & 1 & 0\\
0 & 0 & 0 & -1
\end{smallmatrix}\right),
\qquad
\left(\begin{smallmatrix}
0 & 1 & 0 & 0\\
1 & 0 & 0 & 0\\
0 & 0 & 0 & 1\\
0 & 0 & 1 & 0
\end{smallmatrix}\right).
\end{equation*}
\end{enumerate}
These two actions were analysed with use of other methods in~\cite{SymplCox}.

By the McKay correspondence~\cite{ItoReid} the exceptional divisor $E$ on a crepant resolution $X\to \bC^4/G$ is irreducible if $G = S_3$ and there are two irreducible components $E_1,E_2$ if $G = \bZ_2\wr S_2$. As before, using reasoning as in~\cite[Sect~2D]{81resolutions} we can show that free abelian group $\Pic(X)$ has a basis $L$, or, respectively $L_1,L_2$ which are dual (with respect to the intersection form) to the basis of space of numerical classes of curves on $X$ given by generic fibres of restriction of resolutions to the components $E$ and $E_1,E_2$.

Denote:

{\small 
\begin{tabular}{l}
$\phi_{01}= x_{1}y_{1},\quad \phi_{02} = x_{2}y_{2},\quad \phi_{03} = x_{1}y_{2} + x_{2}y_{1},$ \\
$\phi_{04} = x_{1}^{3} + y_{1}^{3}, \ \phi_{05} = x_{2}^{3} + y_{2}^{3},\ \phi_{06} =x_{1}^{2}x_{2} + y_{1}^{2}y_{2}, \ \phi_{07} = x_{1}x_{2}^{2} + y_{1}y_{2}^{2},$\\
$\phi_{11} = x_{1}y_{2} - x_{2}y_{1},$ \\
$\phi_{12} = x_{1}^{3} - y_{1}^{3}, \ \phi_{13} = x_{2}^{3} - y_{2}^{3},\ \phi_{14} = x_{1}^{2}x_{2} - y_{1}^{2}y_{2}, \ \phi_{15} = x_{1}x_{2}^{2} - y_{1}y_{2}^{2},$ \\
\\
$\psi_{01} = x_1^2+y_1^2, \quad \psi_{02} = x_2^2+y_2^2, \quad \psi_{03} = x_1x_2+y_1y_2,$\\
$\psi_{11} = x_1y_2+x_2y_1, \quad \psi_{12} = x_1y_1, \quad \psi_{13} = x_2y_2,$\\
$\psi_{21} = x_1x_2-y_1y_2, \quad \psi_{22} = x_1^2-y_1^2, \quad \psi_{23} = x_2^2-y_2^2,$\\
$\psi_{3} = x_{1}y_{2} - x_{2}y_{1}$.
\end{tabular}
}

\begin{theorem} \label{thm-cox-rings-two-examples}
\leavevmode
\begin{enumerate}[label=(\alph*)]
\item Let $G = S_3$. The Cox ring $\cR(X)\subset \cR(\bC^4/G)[t^{\pm 1}]$ of the (unique) symplectic resolution $X\to \bC^4/G$ is generated by the elements $w_{01},\ldots,w_{07},$ $w_{11},\ldots,w_{15},t$, where $w_{0i} = \phi_{0i}, \ w_{1j} = \phi_{1j}t$. In particular the degree matrix with respect to the generator $L$ of $\Pic(X)$ (the first row) and to the $T$-action (remaining two rows) is:
\setcounter{MaxMatrixCols}{20}
\begin{equation*}
\left(\begin{smallmatrix}
w_{01} & w_{02} & w_{03} & w_{04} & w_{05} & w_{06} & w_{07} & w_{11} & w_{12} & w_{13} & w_{14} & w_{15} & t \\
\\
\hline\\
\\
0 & 0 & 0 & 0 & 0 & 0 & 0 & 1 & 1 & 1 & 1 & 1 & -2\\
\\
\hline\\
\\
2 & 0 & 1 & 3 & 0 & 2 & 1 & 1 & 3 & 0 & 2 & 1 & 0\\
0 & 2 & 1 & 0 & 3 & 1 & 2 & 1 & 0 & 3 & 1 & 2 & 0 \\
\end{smallmatrix}\right).
\end{equation*}

\item Let $G = \bZ_2 \wr S_2$. The Cox ring $\cR(X)\subset \cR(\bC^4/G)[t_1^{\pm 1},t_2^{\pm 1}]$ of the symplectic resolution $X\to \bC^4/G$ is generated by the elements $w_{01},w_{02},w_{03},$ $w_{11},w_{12},w_{13},w_{21},w_{22},w_{23},w_3,$ $s,t$, where $w_{0i} = \psi_{0i}, \ w_{jk} = \psi_{jk}t_j, \ s = t_1^{-2},\ t = t_2^{-2}$. In particular the degree matrix with respect to the generators $L_1, L_2$ of $\Pic(X)$ (the first two rows) and to the $T$-action (remaining two rows) is:
\setcounter{MaxMatrixCols}{20}
\begin{equation*}
\left(\begin{smallmatrix}
w_{01} & w_{02} & w_{03} & w_{11} & w_{12} & w_{13} & w_{21} & w_{22} & w_{23} & w_3 & s & t \\
\\
\hline\\
\\
0 & 0 & 0 & 1 & 1 & 1 & 0 & 0 & 0 & 1 & -2 & 0\\
0 & 0 & 0 & 0 & 0 & 0 & 1 & 1 & 1 & 1 & 0 & -2\\
\\
\hline\\
\\
2 & 0 & 1 & 1 & 2 & 0 & 1 & 2 & 0 & 1 & 0 & 0\\
0 & 2 & 1 & 1 & 0 & 2 & 1 & 0 & 2 & 1 & 0 & 0\\
\end{smallmatrix}\right).
\end{equation*}
\end{enumerate}

\end{theorem}

\begin{theorem}\label{thm:S3}
Let $G = S_3$. Let $\varphi\colon X\to \bC^4/G$ be a crepant resolution.
\begin{enumerate}[label = (\roman*)]
\item The central fibre of $\varphi$ is an irreducible toric surface isomorphic to the projective cone over the image of the third Veronese embedding $\bP^1\to \bP^3$. 
\item The linear system $|L|$ is globally generated.
\item $\varphi$ can be constructed as the map induced between GIT quotients of $\Spec \cR$ with respect to the character of the Picard torus corresponding to $L$ and to the trivial one, where $\cR$ is the ring from the assertion of Theorem~\ref{thm-cox-rings-two-examples}(a).
\item If $h^{0}(X,pL)_{(a,b)}$ is the dimension of the subspace of sections $H^{0}(X,pL)$ on which $T$ acts with the weight $(a,b)$, then we have the following generating function for such dimensions for line bundles inside the movable cone: 
\begin{equation*}
\sum_{a,b,p\ge 0}h^{0}(X,pL)_{(a,b)}y^p t_1^a t_2^b =\sum_{i=1}^{3}\frac{1}{(1-t^{\mu_i}y)\prod_{j=1}^4(1-t^{\nu_{i,j}})},
\end{equation*}
where $\mu_1 = (0,3), \ \mu_2 = (1,1), \ \mu_3 = (3,0)$, and 

{\small
\begin{tabular}{ c c c c }
$\nu_{1,1} = (1,-2),$ & $\nu_{1,2} = (1,-1),$ & $\nu _{1,3} = (0,3),$ & $\nu_{1,4} = (0,2),$\\
$\nu_{2,1} = (-1,2),$ & $\nu_{2,2} = (2,-1),$ & $\nu_{2,3} = (0,1),$ & $\nu_{2,4}=(1,0),$ \\ 
$\nu_{3,1} = (-2,1),$ & $\nu_{3,2} = (-1,1),$ & $\nu_{3,3} = (2,0),$ & $\nu_{3,4} = (3,0).$
\end{tabular}
}
\end{enumerate}
\end{theorem}

In part (c) of the theorem one can give an explicit open cover by affine spaces, as before. The compasses at the fixed points which are used to formulate part (d) are illustrated by the picture below.

\begin{center}
\begin{tikzpicture}[scale=0.5,>=latex]

\draw[->] (-0.1,3) -- (-0.1,6);
\draw[->] (0.1,3) -- (0.1,5);
\draw[->] (0,3) -- (1,2);
\draw[-] (0,3) -- (1,1);

\draw[->] (1,1) -- (1,2);
\draw[->] (1,1) -- (2,1);
\draw[-] (1,1) -- (3,0);

\draw[->] (3,0) -- (2,1);
\draw[->] (3,0.1) -- (5,0.1);
\draw[->] (3,-0.1) -- (6,-0.1);

 \foreach \Point in{(3,0), (2,1), (1,1), (1,2), (0,3)}{
       \draw[black] \Point circle[radius=3pt];
       \fill[black] \Point circle[radius=3pt];}
       
\coordinate (A) at (0,3);
       \node at (A) [left = 1mm of A]{\small $(0,3)$};
       \coordinate (B) at (1,1);
       \node at (B) [left = 1mm of B]{\small $(1,1)$};
       \coordinate (C) at (3,0);
       \node at (C) [left = 1mm of C]{\small $(3,0)$};
\end{tikzpicture}
\end{center}

\begin{theorem}\label{thm:D8}
Let $G = \bZ_2\wr S_2$. Let $\varphi\colon X\to \bC^4/G$ be a crepant resolution.
\begin{enumerate}[label = (\roman*)]
\item The central fibre of $\varphi$ consists of two irreducible components, one of which is isomorphic to the Hirzebruch surface $\cH_4$ and the other one isomorphic to $\bP^2$.
\item The linear systems $|L_1|$ and $|L_1+L_2|$ are globally generated.
\item $\varphi$ can be constructed as the map induced between GIT quotients of $\Spec \cR$ with respect to the character of the Picard torus corresponding to either $2L_1 + L_2$ or $L_1 + 2L_2$ and to the trivial one, where $\cR$ is the ring from the assertion of Theorem~\ref{thm-cox-rings-two-examples}(b).
\item If $h^{0}(X,pL_1+qL_2)_{(a,b)}$ is the dimension of the subspace of sections $H^{0}(X,pL_1+qL_2)$ on which $T$ acts with the weight $(a,b)$, then we have the following generating function for such dimensions for line bundles inside the movable cone: 
\begin{multline*}
\sum_{a,b,p, q\ge 0}h^{0}(X,pL_1+qL_2)_{(a,b)}y_1^p y_2^q t_1^a t_2^b =\\
=\sum_{i=1}^{5}\frac{1}{(1-t^{\mu_i(L_1)}y_1)(1-t^{\mu_i(L_2)}y_2)\prod_{j=1}^4(1-t^{\nu_{i,j}})},
\end{multline*}
where 

{\small
\begin{tabular}{ c c c c }
$\nu_{1,1} = (1,-3),$ & $\nu_{1,2} = (1,-1),$ & $\nu _{1,3} = (0,4),$ & $\nu_{1,4} = (0,2),$\\
$\nu_{2,1} = (-1,3),$ & $\nu_{2,2} = (1,-1),$ & $\nu_{2,3} = (2,-2),$ & $\nu_{2,4}=(0,2),$ \\ 
$\nu_{3,1} = (-1,1),$ & $\nu_{3,2} = (1,-1),$ & $\nu_{3,3} = (2,0),$ & $\nu_{3,4} = (0,2),$\\
$\nu_{4,1} = (3,-1),$ & $\nu_{4,2} = (-1,1),$ & $\nu_{4,3} = (-2,2),$ & $\nu_{4,4} = (2,0),$\\
$\nu_{5,1} = (-3,1),$ & $\nu_{5,2} = (-1,1),$ & $\nu_{5,3} = (4,0),$ & $\nu_{5,4} = (2,0)$\\
\end{tabular}
}

{\small
and $\qquad$
\begin{tabular}{| c | c | c | c | c | c |}
\hline
$i$ & $1$ & $2$ & $3$ & $4$ & $5$ \\
\hline
$\mu_i(L_1)$ & $(0,2)$ & $(0,2)$ & $(1,1)$ & $(2,0)$ & $(2,0)$ \\
\hline 
$\mu_i(L_2)$ & $(0,2)$ & $(0,-1)$ & $(0,0)$ & $(-1,1)$ & $(2,0)$\\
\hline
\end{tabular}
}
\end{enumerate}

\end{theorem}

Again in part (c) of one can give an explicit open cover by affine spaces and the compasses used in (d) may be illustrated by a picture.

\begin{center}
\begin{tikzpicture}[scale=0.5,>=latex]

\draw[->] (-0.1,6) -- (-0.1,8);
\draw[->] (0.1,6) -- (0.1,10);
\draw[->] (0,6) -- (1,5);
\draw[-] (0,6) -- (1,3);

\draw[->] (1,3) -- (1,5);
\draw[-] (1.05,3.05) -- (2.05,2.05);
\draw[-] (0.9,2.9) -- (2.9,0.9);

\draw[->] (2,2) -- (4,2);
\draw[->] (2,2) -- (2,4);
\draw[-] (2.05,2.05) -- (3.05,1.05);

\draw[-] (3,1) -- (6,0);
\draw[->] (3,1) -- (5,1);

\draw[->] (6,-0.1) -- (8,-0.1);
\draw[->] (6,0.1) -- (10,0.1);
\draw[->] (6,0) -- (5,1);

\foreach \Point in{(5,1), (3,3), (1,5), (2,2), (6,0), (4,2), (2,4), (3,1), (0,6), (1,3)}{
      \draw[red] \Point circle[radius=4pt];
      \fill[red] \Point circle[radius=4pt];}
      \foreach \Point in {(2,2), (3,1), (1,3)}{
      \draw[black] \Point circle[radius=2pt];
      \fill[black] \Point circle[radius=2pt];}
      
       \coordinate (A) at (0,6);
       \node at (A) [left = 1mm of A]{\small $(0,6)$};
       \coordinate (B) at (1,3);
       \node at (B) [left = 1mm of B]{\small $(1,3)$};
       \coordinate (C) at (2,2);
       \node at (C) [left = 1mm of C]{\small $(2,2)$};
       \coordinate (D) at (3,1);
       \node at (D) [left = 1mm of D]{\small $(3,1)$};
       \coordinate (E) at (6,0);
       \node at (E) [left = 1mm of E]{\small $(6,0)$};
\end{tikzpicture}
\end{center}

\bibliographystyle{plain}
\bibliography{LS}

\end{document}